\newcommand*{\TitleFont}{%
      \usefont{\encodingdefault}{\rmdefault}{b}{n}%
      \fontsize{16}{20}%
      \selectfont}
\title{\TitleFont Global well-posedness of the two-dimensional stochastic complex Ginzburg-Landau equation
with cubic nonlinearity}
\author{Toyomu Matsuda}
\affil{{\small Graduate School of Mathematics, Kyushu University \\
744 Motooka, Nishi-ku, Fukuoka 819-0395, Japan \\
email address: ma218004@math.kyushu-u.ac.jp}}
\date{}
\begin{document}
\maketitle
\begin{abstract}
  The aim of this paper is to prove, under minimum assumptions,
  the global well-posedness of the two-dimensional stochastic complex Ginzburg-Landau
  equation on the torus driven by the additive space-time white noise.
  In addition to the global well-posedness, we prove an estimate of the solution which is uniform
  with respect to the initial condition and the strong Feller property of the dynamics.
  \vspace{0.5em}\\
  \emph{Key words:} nonlinear stochastic partial differential equations,  stochastic complex Ginzburg-Landau equation,
  Da Prato-Debussche method, global well-posedness.
\end{abstract}

\tableofcontents

\section{Introduction}
In this paper, we consider the two-dimensional stochastic complex Ginzburg-Landau (SCGL) equation:
\begin{equation}
  \label{eq:CGL}
  \left \{
  \begin{aligned}
  &\partial_t u = (i+\mu) \Delta u - \nu \abs{u}^2 u + \lambda u + \xi \quad \mbox{in } (0, \infty) \times \torus, \\
  &u(0, \cdot) = u_0,
  \end{aligned} \right.
\end{equation}
where $\mu > 0$, $\nu \in \set{z \in \C \given \Re z > 0}$ and $\lambda \in \C$.
We denote by $\torus$ the two-dimensional torus (see Subsection \ref{subsection:notation_for_Besov}).
The random field $\xi$ is the complex space-time white noise, i.e., the centered Gaussian random field
whose covariance structure is formally given by
\begin{equation}
  \label{eq:covariance_of_white_noise}
  \expect[ \xi(t, x) \xi(s, y)] = 0, \qquad \expect [\xi(t, x) \conj{\xi(s, y)}] = \delta(t-s) \delta(x-y).
\end{equation}

The SCGL is an important example of stochastic partial differential equations (SPDEs) as there are many papers on it.
We are interested in the SCGL driven by the additive space-time white noise.
The SCGL was studied in \cite{Hai02} for the spatial dimension $d=1$ and in \cite{HIN17} and \cite{Hos18} for $d=3$.
The SCGL for $d=2$ was recently studied in \cite{Tre19}. We will discuss some differences between \cite{Tre19} and our work later.

We explain how the dimension effects the difficulty of solving the SCGL.
For $d=1$, the solution of the SCGL takes values in a function space and hence the SCGL can be solved in the framework of standard
SPDEs \cite{DPZ14}.
In contrast, for $d \geq 2$, the solution of the SCGL is not a function but a Schwartz distribution. Indeed,
since the regularity of the space-time white noise $\xi$ is
\begin{equation*}
  2 \cdot (-\frac{1}{2}) + d \cdot(-\frac{1}{2}) - \epsilon = -\frac{d+2}{2} - \epsilon,
\end{equation*}
the regularity of the solution $u$ of the SCGL should be $2 - \frac{d+2}{2} -\epsilon < 0$.
As a result, the cubic nonlinearity $\abs{u}^2 u$ in \eqref{eq:CGL} does not make sense.

However, we can give a natural meaning to the solution of the SCGL for $d = 2, 3$ by way of renormalization.
Using the theory of regularity structures \cite{Hai14} and the theory of paracontrolled distributions \cite{GIP15},
the work \cite{HIN17} proved the local well-posedness of the SCGL for $d=3$. Later, the work \cite{Hos18} proved the
global well-posedness of the SCGL for $d=3$ under the assumption $\mu > \frac{1}{2\sqrt{2}}$, by adopting the technique of
\cite{MW3dim}.

The SCGL for $d=2$ should be solved more easily than the SCGL for $d=3$, as we need neither the theory of regularity structures nor
the theory of paracontrolled distributions. In fact, Da Prato-Debussche method, introduced in \cite{DD03}, is sufficient.
Let $Z$ be the solution of the linear SPDE
\begin{equation*}
  \partial_t Z = [(i+\mu) \Delta - 1] Z + \xi.
\end{equation*}
We call $Z$ and its variants Ornstein-Uhlenbeck processes because of their obvious similarity to
the stochastic differential equations (SDEs) of the same name.
We decompose the solution $u$ of the SCGL \eqref{eq:CGL} by $u = Y + Z$.
Then, $Y$ formally solves
\begin{multline*}
  \partial_t Y = [(i+\mu)\Delta - 1] Y -
  (1+\lambda)(Z + Y) \\ - \nu(\abs{Y}^2Y + 2 Z\abs{Y}^2 + \conj{Z}Y^2 + 2\abs{Z}^2 Y + Z^2 \conj{Y} + \abs{Z}^2 Z).
\end{multline*}
Since $Z$ does not take values in a function space,
$Z^2$, $\abs{Z}^2 = Z \conj{Z}$ and $\abs{Z}^2 Z = Z^2 \conj{Z}$ do not make sense. However,
as will be explained in Section \ref{section:she}, we can naturally define renormalized Wick powers $Z^{:k, l:}$
for $k, l \in \N$. We replace $Z^k \conj{Z}^l$ by $Z^{:k, l:}$ above to obtain
the shifted equation
\begin{equation}\label{eq:shifted_equation}
  \begin{multlined}
    \partial_t Y = [(i+\mu)\Delta - 1] Y -
    (1+\lambda)(Z + Y) \\ - \nu(\abs{Y}^2Y + 2 Z\abs{Y}^2 + \conj{Z}Y^2 + 2Z^{:1,1:} Y + Z^{:2,0:} \conj{Y} + Z^{:2,1:}).
  \end{multlined}
\end{equation}
This partial differential equation can be rigorously solved in mild formulation.
The main objective of this paper is to prove the global well-posedness of the equation \eqref{eq:shifted_equation},
which will be proved in Section \ref{sec:solving_CGL}.

In Section \ref{sec:strong_Feller}, we will show that the solution of \eqref{eq:CGL} defines a Markov process on some Besov space and,
furthermore, we show that this Markov process is strong Feller, in the spirit of \cite[Section 5]{TW18}.

While the author was preparing this paper, the preprint \cite{Tre19} appeared which also addresses
the global well-posedness of the two-dimensional SCGL on the torus. However, there are two major differences.
One is that \cite{Tre19} considers more general nonlinearity $\abs{u}^{2m-2} u$ ($m = 1, 2, 3, \ldots $) than
cubic nonlinearity in \eqref{eq:CGL}.
The other is conditions on parameters for the global well-posedness.
In \cite{Tre19}, the global well-posedness for \eqref{eq:CGL} is proved when $\mu > \frac{1}{2\sqrt{2}}$ or $\mu = \frac{\Re \nu}{\Im \nu}$.
In this paper, such conditions will not be imposed.

\subsection{Statement of the main theorem}
Let $\contisp^{\alpha} \defby \B^{\alpha}_{\infty, \infty}$ be the Besov-H\"older space of regularity $\alpha$ on $\torus$.
See Subsection \ref{subsection:notation_for_Besov} for its definition. We suppose
$\underline{Z} = (Z, Z^{(2, 0)}, Z^{(1, 1)}, Z^{(2, 1)})$ satisfies
\begin{equation*}
   \underline{Z} \in C((0, \infty); \contisp^{-\alpha})^4
\end{equation*}
and
\begin{equation*}
  \sup_{0 < t \leq T} \norm{Z(t)}_{\contisp^{-\alpha}} + \sup_{\substack{0 < t \leq T,
  (k,l) = (2, 0), (1, 1), (2, 1)}} t^{\alpha} \norm{Z^{(k,l)}(t)}_{\contisp^{-\alpha}} < \infty
\end{equation*}
for every $\alpha \in (0, \infty)$ and $T \in (0, \infty)$.
We set
\begin{multline*}
    \Psi(Y, \underline{Z}) =
    (1+\lambda)(Z + Y) \\ - \nu(\abs{Y}^2Y + 2 Z\abs{Y}^2 + \conj{Z}Y^2 + 2Z^{(1,1)} Y + Z^{(2,0)} \conj{Y} + Z^{(2,1)})
\end{multline*}
and $A \defby (i+\mu) \Delta - 1$.
Now we can state the main theorem of this paper (See Theorem \ref{thm:global_wellposedness} as well);
\begin{theorem}\label{thm:main_thm_of_this_paper}
  Suppose $\mu, \Re(\nu) \in (0, \infty)$ and $\lambda \in \C$.
  Let $T \in (0, \infty)$, $\alpha_0 \in (0, \frac{2}{3})$ and $Y_0 \in \contisp^{-\alpha_0}$.
  Let $\underline{Z}$ be as above.
  Then, there exists exactly one solution $Y$ satisfying the following conditions:
  \begin{enumerate}[(i)]
    \item We have $Y \in C((0, T]; \contisp^{\alpha_1})$ and
    $\sup_{0 < t \leq T} t^{\gamma} \norm{Y_t}_{\contisp^{\alpha_1}} < \infty$
    for some $\alpha_1 \in (0, \infty)$ and $\gamma \in (0, \frac{1}{3})$.
    \item For every $t \in (0, T]$, we have
    \begin{equation*}
      Y_t = e^{tA} Y_0 + \int_0^t e^{(t-s)A} \Psi(Y_s, \underline{Z}_s) ds.
    \end{equation*}
  \end{enumerate}
\end{theorem}
By substituting the corresponding Wick powers of the Ornstein-Uhlenbeck process to $\underline{Z}$,
Theorem \ref{thm:main_thm_of_this_paper} implies
the global well-posedness of \eqref{eq:shifted_equation}.

We now discuss the difficulty in the proof of Theorem \ref{thm:main_thm_of_this_paper}.
The SCGL can be viewed as a complex analogue of the dynamic $\Phi^4_d$ equation:
\begin{equation}\label{eq:phi_4}
  \partial_t \Phi = \Delta \Phi - \Phi^3 + \xi_{\R},
\end{equation}
where $\xi_{\R}$ is the real space-time white noise on $\R \times \mathbbm{T}^d$.
The global well-posedness of the dynamic $\Phi^4_d$ equation was studied for $d=2$ in \cite{DD03}, \cite{MW2dim} and
\cite{TW18} and for $d=3$ in \cite{MW3dim}.
Although our strategy is the same as the ones from \cite{MW2dim} and \cite{TW18} in spirit, crucial difficulties arise due to
the presence of the dispersion $i \Delta$. In fact, the argument in \cite{MW2dim} and \cite{TW18} only leads to a priori
$L^p$ estimate for $p \in [2, 2(1+\mu^2 + \mu \sqrt{1+\mu^2}))$.
As will be explained in the beginning of Subsection \ref{subsec:a_priori_L_p}, this a priori $L^p$ estimate is
insufficient to prove the global well-posedness of the SCGL \eqref{eq:CGL} when $\mu \leq \frac{1}{2\sqrt{2}}$.

Such difficulty has been already observed in the work \cite{Hos18}.
This is why \cite{Hos18} has to assume $\mu > \frac{1}{2\sqrt{2}}$ to prove the global well-posedness
of the three-dimensional SCGL.
We stress, however, that the assumption of $\mu$ is natural in view of
\cite[Theorem 4.1]{DGL94}.

In contrast, \cite[Theorem 4.1]{DGL94} tells that, if we do not add a noise,
the global well-posedness of the two-dimensional complex Ginzburg-Landau equation with cubic nonlinearity holds only assuming
$\mu, \Re(\nu) \in (0, \infty)$. Therefore, it is natural to expect the global well-posedness of the two-dimensional SCGL
\eqref{eq:CGL} holds
even when $\mu \leq \frac{1}{2\sqrt{2}}$.
The main achievement of this paper is hence to improve a priori $L^p$ estimate by taking advantage of smoothing effect
of the semigroup $e^{t A}$, leading to the proof
of Theorem \ref{thm:main_thm_of_this_paper}.
The approach can be viewed as bootstrap arguments \cite[Section 1.3]{Tao06}.
\subsection{Results on the solutions of the SCGL}
The statement of Theorem \ref{thm:main_thm_of_this_paper} and its proof is deterministic.
However, the solution of the SCGL \eqref{eq:CGL} (see Definition \ref{def:def_of_CGL}) is a stochastic object and
studying probabilistic aspects of the solution is an important topic.
We derive three results in this direction.

The first result is on the convergence of smoothly approximated solutions of \eqref{eq:CGL}.
Let $\xi$ be the space-time white noise on $\R \times \torus$ (see Definition \ref{def:definition_of_white_noise_and_filtration}).
Let $\rho$ be a smooth function on $\R \times \R^2$ with rapid decay at infinity.
We set $\rho^{(\delta)} \defby \delta^{-4} \rho(\delta^{-2} t, \delta^{-1} x)$
and  $\xi^{(\delta)} \defby \rho^{(\delta)} * \xi$.
\begin{theorem}\label{thm:convergence_of_smooth_cgl}
  Let $\alpha_0 \in (0, \frac{2}{3})$, $u_0 \in \contisp^{-\alpha_0}$ and $u$ be the solution of \eqref{eq:CGL} with the
  initial condition $u_0$ (see Definition \ref{def:def_of_CGL}). Let $u^{(\delta)}$ be the mild solution of
  \begin{equation*}
    \left \{
    \begin{aligned}
      &\partial_t u^{(\delta)} = (i+\mu) \Delta u^{(\delta)} - \nu ( \abs{u^{(\delta)}}^2 u^{(\delta)}
      - 2c_{\delta} u^{(\delta)}) + \lambda u^{(\delta)} + \xi^{(\delta)} \,\, \mbox{in } (0, \infty) \times \torus, \\
      & u^{(\delta)}(0) = u_0, \\
    \end{aligned} \right.
  \end{equation*}
  where $c_{\delta} = c_{1;\delta}$ is the renormalized constant given in \eqref{eq:def_of_renormalized_constant},
  which diverges logarithmically to infinity as $\delta$ tends to $0$.
  Then, we have
  \begin{equation*}
    \lim_{\delta \to 0} \expect[ \sup_{0 \leq t \leq T} \norm{u(t) - u^{(\delta)}(t)}_{\contisp^{-\alpha_0}}^p ] = 0
  \end{equation*}
  for every $p \in (0, \infty)$ and $T \in (0, \infty)$.
\end{theorem}
Theorem \ref{thm:convergence_of_smooth_cgl} is contained in Corollary \ref{cor:convergence_of_smoothed_solution}
and the proof is given there.

The second result (Theorem \ref{thm:coming_down_from_infinity}) is on a strong bound on the solutions of \eqref{eq:CGL}.
\begin{theorem}\label{thm:coming_down_from_infinity}
  Let $\alpha_0 \in (0, \frac{2}{3})$ and $u(\cdot; u_0)$ be the solution of \eqref{eq:CGL}
  with the initial condition $u_0 \in \contisp^{-\alpha_0}$ (see
  Definition \ref{def:def_of_CGL}). Then, we have
  \begin{equation}\label{eq:coming_down_from_infinity}
    \sup_{u_0 \in \contisp^{-\alpha_0}} \sup_{t \geq t_0} \expect[ \norm{u(t; u_0)}_{\contisp^{-\alpha}}^p ] < \infty
  \end{equation}
  for every $\alpha, \alpha_0 \in (0, \frac{2}{3})$, $t_0 \in (0, \infty)$ and $p \in (0, \infty)$.
\end{theorem}
A surprising part of \eqref{eq:coming_down_from_infinity} is that there is no restriction on $u_0 \in \contisp^{-\alpha_0}$.
Such bound is obtained in \cite{MW3dim}, where they call the bound ``coming down from infinity".
The proof of Theorem \ref{thm:coming_down_from_infinity} is given at the end of Subsection \ref{subsec:continuity_and_coming_down}.

The third result is to prove the strong Feller property of the Markov process defined by \eqref{eq:CGL}.
\begin{theorem}\label{thm:strong_Feller}
  Let $\alpha_0 \in (0, \frac{2}{3})$ and $u(\cdot; u_0)$ be the solution of the SCGL
  \eqref{eq:CGL} with the initial condition $u_0 \in \contisp^{-\alpha_0}$
  (see Definition \ref{def:def_of_CGL}).
  Then,
  \begin{equation*}
    P_t \Phi (v) \defby \expect[ \Phi(u(t; v))], \quad v \in \contisp^{-\alpha_0}, \, t \geq 0
  \end{equation*}
  defines a strong Feller semigroup on $\contisp^{-\alpha_0}$.
  That is, $(P_t)_{t \geq 0}$ is a Markov semigroup on $\contisp^{-\alpha_0}$ and
  the map
  \begin{equation*}
    \contisp^{-\alpha_0} \ni v \mapsto \expect[ \Phi(u(t; v)) ] \in \R
  \end{equation*}
  is continuous for every $\Phi \in L^{\infty}(\contisp^{-\alpha_0})$ and $t > 0$.
\end{theorem}
This will be proved in Section \ref{sec:strong_Feller}, see Theorem \ref{thm:continuity_in_total_variation} and afterwards specifically.
Section \ref{sec:strong_Feller} follows \cite[Section 5]{TW18}.

\subsection{Outline}
In Section \ref{section:she}, we summarize basic properties of the Ornstein-Uhlenbeck (OU) processes.
Only in this section, since there will be no big technical differences, we consider the OU process in the plane as well.
In Subsection \ref{subsec:wick_powers}, we construct renormalized Wick powers of the stationary OU processes.
In Subsection \ref{subsec:convergence_of_mollified_processes}, we consider smooth approximations of the OU processes.
In Subsection \ref{subsection:nonstationary_OU}, we introduce nonstationary OU processes and study their
basic properties.

Section \ref{sec:solving_CGL} is the main part of this paper. After reviewing the main theorem of this paper
in Subsection \ref{subsec:review_of_main_thm}, we prove the local well-posedness of the shifted equation
in Subsection \ref{subsec:local_wellposedness}.
Then, we move to a priori $L^p$ estimate in Subsection \ref{subsec:a_priori_L_p}.
In Subsection \ref{subsec:bootstrap}, we improve bounds obtained in the previous subsection by bootstrap arguments.
In Subsection \ref{subsec:continuity_and_coming_down}, we prove smooth approximations of the SCGL and the
strong bound called coming down from infinity.

The aim of Section \ref{sec:strong_Feller} is to prove the strong Feller property of the Markov process defined by
the SCGL \eqref{eq:CGL}. This section follows \cite[Section 5]{TW18}.
In Subsection \ref{subsec:Markov_property}, we show that the solutions of \eqref{eq:CGL} defines
a Markov process on some Besov space. In Subsection \ref{subsec:approximation_by_SDEs},
we show that the SPDE \eqref{eq:CGL} can be approximated by a system of SDEs.
In Subsection \ref{subsec:bismut_elworthy_li_formula}, we prove the Bismut-Elworthy-Li formula in our context.
Finally in Subsection \ref{subsec:holder_continuity}, we prove the main theorem of this section,
which readily implies the strong Feller property.

In Appendix \ref{sec:besov}, we collect results of Besov spaces which are used in this paper.
In Appendix \ref{sec:complex_ito_integrals}, we recall basic properties of complex multiple It\^o-Wiener integrals.

\subsection{Notations}\label{subsection:notation_for_Besov}
\begin{enumerate}[(i)]
  \item $\N \defby \{0, 1, 2, \ldots\}$.
  \item $\log_+ x \defby \log( \max\{x, 1\})$.
  \item $A \defby (i+\mu) \Delta - 1$.
  \item We set $e_m(x) \defby e^{2\pi i m \cdot x}$ for $m \in \Z^2$ and $x \in \R^2$.
  \item We denote by $\torus_M \defby [-\frac{M}{2}, \frac{M}{2}]^2$ the two dimensional torus, with
  the convention $\torus_{\infty} \defby \R^2$. We set $\torus \defby \torus_1$.
  \item We denote by $B(x, r)$ the Euclidean ball in $\R^2$ centered at $x$ with radius $r$.
  \item For complex-valued functions $\phi, \, \psi$, we set
  \begin{equation*}
    \inp{\phi}{\psi}_M \defby \int_{\torus_M} \phi(x) \psi(x) \, dx
  \end{equation*}
  and $\inp{\phi}{\psi} \defby \inp{\phi}{\psi}_{1}$.
  Note that we do not take complex conjugate for the second variable.
  We write $\phi * \psi$ for the convolution of $\phi$ and $\psi$.
  \item
  We write $C^{\infty}(\torus_M)$ for the space of $M$-periodic smooth functions and $C^{\infty}_c$ for
  the space of compactly supported smooth functions on $\R^2$.
  \item Let $N \in \N \cap [1, \infty)$. We denote by $\S = \S(\R^N)$ the Schwartz space of smooth functions on $\R^N$ with rapid decay at infinity,
  and by $\S'$ its dual space of Schwartz distributions.
  We also use the notation $\inp{f}{\phi}$ for the pairing of $f\in \S'$ and $\phi \in \S$.
  \item
  We write $\F f$ for the Fourier transform of $f \in \S'$, where we have
  \begin{equation*}
    \F f(\xi) = \int_{\R^2} f(x) e^{-2 \pi i x \cdot \xi} \, dx
  \end{equation*}
  if the function $f$ is integrable. We write $\F^{-1} f(\xi) \defby [\F f] (-\xi)$ for the inverse Fourier transform.
  \item We denote by $\supp(f)$ the support of a distribution $f$.
  \item We write $X \lesssim Y$ if there exists a constant $C \in (0, \infty)$ such that $X \leq C Y$.
  When we emphasize that $C$ depends on parameters $a, b, \ldots$, we write $X \lesssim_{a, b, \ldots} Y$.
\end{enumerate}

Finally, we summarize notations related to Besov spaces.
Basic properties of Besov spaces are summarized in Appendix \ref{sec:besov}.

We fix smooth, radial functions
$\chi_{-1}$, $\chi :\R^2 \to [0, 1]$ which satisfy
\begin{align*}
  &\supp(\chi_{-1}) \subset B(0, 4/3) \\
  &\supp(\chi) \subset B(0, 8/3) \setminus B(0, 3/4) \\
  &\chi_{-1} + \sum_{k = 0}^{\infty} \chi(\cdot/2^k) = 1.
\end{align*}
We set $\chi_{k} \defby \chi(\cdot/2^k)$, $\eta_k \defby \F^{-1} \chi_k$
and $\eta \defby \eta_0$.
For a distribution $f$, we write $\delta_k f \defby \eta_k * f$.
\begin{definition}\label{def:definition_of_Besov}
  Let $f$ be a distribution on $\R^2$ such that $\inp{f}{\phi(\cdot + m)} = \inp{f}{\phi}$ for every
  $\phi \in \S$ and $m \in \Z^2$.
  We define
  \begin{equation*}
    \norm{f}_{\B^{\alpha}_{p,q}} \defby \norm{ (2^{\alpha k} \norm{\delta_k f}_{L^p(\torus)})_{k \geq -1} }_{l^q} \,,
  \end{equation*}
  where $p, q \in [1, \infty]$, $\alpha \in \R$.
  The Besov space $\B^{\alpha}_{p, q}$ is the completion of $C^{\infty}(\torus)$ with respect to the norm
  $\norm{\cdot}_{\B^{\alpha}_{p, q}}$.
\end{definition}
We note that, when $p=\infty$ or $q=\infty$, this definition is different from the usual one, where the Besov space
is a collection of all distributions for which the corresponding Besov norm is finite.
An advantage of our definition is that all Besov spaces are separable. This condition is necessary, for instance,
to apply Kolmogorov continuity theorem.

\begin{remark}
  We can similarly define Besov spaces $\B_{p,q}^{\alpha, M}$ on the torus $\torus_M$.
  We only consider these spaces for $M \neq 1$ in Section \ref{section:she}.
\end{remark}

\section{Renormalization of Ornstein-Uhlenbeck processes}\label{section:she}
In this section, we summarize basic properties of Ornstein-Uhlenbeck process
\begin{equation}\label{eq:OU_process}
  \partial_t Z = A Z + \xi, \qquad \mbox{where } A \defby (i+\mu) \Delta - 1.
\end{equation}
Here $\xi = \xi(t, x)$ is the complex space-time white noise. Formally, $\{ \xi(t, x) \}_{t, x}$ is
a family of the centered Gaussian random variables whose covariance structure is given by \eqref{eq:covariance_of_white_noise}.
More precisely, we have the following definition.
\begin{definition}\label{def:definition_of_white_noise_and_filtration}\leavevmode
  \begin{enumerate}[(i)]
    \item
      The complex space-time white noise $\xi$ on $\R \times \R^2$ is a family of centered complex Gaussian random variables
      $\set{\xi(\phi) \given \phi \in L^2(\R \times \R^2)}$ such that
      \begin{equation*}
        \expect[\xi(\phi) \xi(\psi)] = 0, \qquad \expect[\xi(\phi) \conj{\xi(\psi)}] = \inp{\phi}{\conj{\psi}}.
      \end{equation*}
    \item Let $\mathcal{G}_t$ be the $\P$-completion of
    \begin{equation*}
      \sigma(\set{ \xi(\phi) \given \phi\vert_{(t, \infty) \times \R^2} \equiv 0, \, \phi \in L^2(\R \times \R^2)}),
    \end{equation*}
    and $\F_t \defby \cap_{t<t'} \mathcal{G}_{t'}$.
  \end{enumerate}
\end{definition}
\begin{remark}\label{rem:white_noise_as_distribution}
  It is possible to realize the white noise $\xi$ as a Besov space-valued random variable by
  applying the three dimensional version of Lemma \ref{lem:modification_of_distribution}.
\end{remark}
\begin{remark}
  The goal of this paper is to prove the global well-posedness of the two-dimensional SCGL \emph{on the torus}, not in the plane.
  However, in this section, we consider not only Ornstein-Uhlenbeck processes on the torus but also ones in the plane,
  since we can treat them in the same framework.
  See Remark \ref{rem:SCGL_in_the_plane} for the SCGL in the plane.
\end{remark}
\begin{definition}\label{def:weighted_Besov}
  We set
  \begin{equation*}
    w_{\sigma}(x) \defby (1 + \abs{x}^2)^{-\frac{\sigma}{2}}, \quad \mbox{for $\sigma \in (2, \infty)$}
  \end{equation*}
  and $\hat{L}^p_{\sigma} \defby L^p(\R^2, w_{\sigma}(x) dx)$.
  We define the norm
  \begin{equation*}
    \norm{f}_{\hat{\B}^{\alpha, \sigma}_{p, q}} \defby \norm{ (2^{\alpha k} \norm{\delta_k f}_{\hat{L}^p_{\sigma}})_{k \geq -1} }_{l^q}
  \end{equation*}
  and the weighted Besov space $\hat{\B}^{\alpha, \sigma}_{p,q}$ as the completion of $C^{\infty}_c$ with respect to the above norm.
\end{definition}
\subsection{Realization of Wick powers}\label{subsec:wick_powers}
We use the theory of complex multiple It\^o-Wiener integrals to define integration with respect to complex white noise.
See Appendix \ref{sec:complex_ito_integrals}.
We write
\begin{multline*}
  \mathcal{J}_{k,l}(f) =: \int_{(\R \times \R^2)^{k+l}} f(t_1, x_1; \ldots ;t_{k+l}, x_{k+l})
  \xi(dt_1 dx_1) \cdots \xi(dt_k dx_k) \\ \conj{\xi}(dt_{k+1} dx_{k+1}) \cdots \conj{\xi}(dt_{k+l} dx_{k+l})
\end{multline*}
for $f \in L^2((\R \times \R^2)^{k+l})$ and $k, l \in \N$.
Heuristically, Duhamel's principle suggests that a stationary solution of \eqref{eq:OU_process} is given by
\begin{align*}
  Z(t, x) &= \int_{-\infty}^t \left[ e^{(t-s)A} \xi(s, \cdot) \right] (x) ds \\
  &= \int_{-\infty}^{\infty} \int_{\R^2} K(t-s, x-y) \xi(s, y) dy ds.
\end{align*}
where
\begin{equation}
  K(t, x) \defby \frac{e^{-t}}{4 \pi (i+\mu) t} \exp \left( - \frac{\abs{x}^2}{4(i+\mu)t} \right) \indic_{\{t \geq 0\}}.
\end{equation}
Therefore, we first define $Z(t)$ as a family of random variables
$\set{ \inp{Z(t)}{\phi} \given \phi \in L^2(\R \times \R^2)}$, where
\begin{equation*}
  \inp{Z(t)}{\phi}_{\infty} \defby \int_{\R \times \R^2} \left[
  \int_{\R^2} K(t-s, z-y) \phi(z) dz \right] \xi(ds dy).
\end{equation*}
The proof of Theorem \ref{thm:main_estimate_of_OU} implies that the function
\begin{equation*}
  \R \times \R^2 \ni (s, y) \mapsto \int_{\R^2} K(t-s, z-y) \phi(z) dz
\end{equation*}
is square integrable.
Theorem \ref{thm:main_estimate_of_OU} proves that $Z$ has a distribution-valued modification.

We define $Z^{:k, l:}$, which is a Wick renormalization of $Z^k \conj{Z}^l$, by
\begin{multline*}
  \inp{Z^{:k,l:}(t)}{\phi}_{\infty} \defby  \int_{(\R \times \R^2)^{k+l}}
  \inp{ \prod_{j=1}^k K(t-s_j, \cdot - y_j) \prod_{j=k+1}^{k+l} \conj{K}(t-s_j, \cdot - y_j)}{\phi}_{\infty} \\
  \xi(ds_1  dy_1) \cdots \xi(ds_k  dy_k) \conj{\xi}(ds_{k+1} dy_{k+1}) \cdots
  \conj{\xi}(ds_{k+l} dy_{k+l}).
\end{multline*}

We introduce a spatially periodised white noise on the same probability space by
$\xi_M(\phi) \defby \xi(\phi_M)$,
where
\begin{equation*}
  \phi_M(t, x) \defby \indic_{\torus_M}(x) \sum_{y \in M \Z^2} \phi(t, x+y).
\end{equation*}
Then a stationary solution of the equation \eqref{eq:OU_process} with $\xi$ replaced by $\xi_M$ should be given
as a family of random variables
\begin{equation*}
  \inp{Z_M(t)}{\phi}_M \defby \int_{\R \times \R^2} \left[ \indic_{\torus_M}(y)
  \int_{\torus_M} K_M(t-s, z-y) \phi(z) dz \right] \xi(ds dy),
\end{equation*}
where $\phi \in L^2_M(\R^2)$ and
\begin{equation}\label{eq:def_of_kerel_K_M}
  K_M(t, x) \defby \sum_{y \in M \Z^2} \frac{e^{-t}}{4 \pi (i+\mu) t} \exp \left( - \frac{\abs{x-y}^2}{4(i+\mu)t} \right) \indic_{\{t \geq 0\}}.
\end{equation}
Similarly, Wick products are given by
\begin{multline*}
  \inp{Z^{:k, l:}_M(t)}{\phi}_M  \\ \defby \int_{(\R \times \R^2)^{k+l}}
  \inp{ \prod_{j=1}^k \indic_{\torus_M}(y_j) K_M(t-s_j, \cdot - y_j)
  \prod_{j=k+1}^{k+l} \indic_{\torus_M}(y_j) \conj{K_M}(t-s_j, \cdot - y_j)}{\phi}_M \\
  \xi(ds_1 dy_1) \cdots \xi(ds_k  dy_k) \conj{\xi}(ds_{k+1}  dy_{k+1}) \cdots
  \conj{\xi}(ds_{k+l}  dy_{k+l}).
\end{multline*}
Finally, we set
\begin{equation*}
  \inp{Z^{:k,l:}_M(t)}{\phi}_{\infty} \defby \inp{Z^{:k,l:}_M(t)}{\phi_M}_M
\end{equation*}
for $\phi \in \S$.

We also set $K_{\infty} \defby K$ and $\abs{x}_M \defby \inf \set{ \abs{x-y} \given y \in M \Z^2}$.
We regard $\infty \Z^2$ as a singleton $\{0\}$.
\begin{lemma}\label{lem:esitemate_of_K_M}
  We define
  \begin{equation}\label{eq:definition_of_K_M}
    \mathscr{K}_{M}(\delta; x) \defby \sum_{y \in M \Z^2} \int_{\abs{\delta}}^{\infty}  \frac{e^{-s}}{8 \pi ( i \delta + \mu s)}
    \exp \left( - \frac{\abs{x- y}^2}{4  (i \delta + \mu s)}\right) \, ds.
  \end{equation}
  Then we have for $M \in [1, \infty]$, $x \in \R^2$ and $\delta \in \R$,
  \begin{equation}\label{eq:estimate_of_K_M}
    \abs{\mathscr{K}_{M}( \delta; x)} \lesssim 1 + \log_{+} \abs{x}_M^{-1},
  \end{equation}
  and for $M \in [1, \infty]$, $x\in \R^2$, $\lambda \in (0, 1)$, $\tau \in \R$ and $\delta \in [-T, T]$,
  \begin{equation}\label{eq:esitimate_of_continuity_of_K_M}
    \abs{\mathscr{K}_M(\tau; x) - \mathscr{K}_{M}(\tau + \delta; x)} \lesssim_{\lambda, T}
    \frac{\abs{\delta}^{\lambda}}{\abs{x}_M^{2\lambda}}.
  \end{equation}
\end{lemma}
\begin{proof}
  Without loss of generality, we can assume $x \in \torus_M$.
  First we note that
  \begin{align*}
    \int_{\abs{\delta}}^{\infty} \Bigg\vert \frac{e^{-s}}{i \delta + \mu s} & \exp  \left( -
    \frac{\abs{x-y}^2}{4(i \delta + \mu s)} \right) \Bigg\vert \, ds \\
    &= \int_{\abs{\delta}}^{\infty} \frac{e^{-s}}{\sqrt{(\mu s)^2 + \delta^2}}
    \exp \left( - \frac{\mu s \abs{x-y}^2}{4(  \delta^2 + (\mu s)^2)} \right) \, ds \\
    &\lesssim \int_{\abs{\delta}}^{\infty} \frac{e^{-s}}{s}
    \exp \left( - \frac{\mu \abs{x-y}^2}{4 (1 + \mu^2) s} \right) \, ds.
  \end{align*}
  The term corresponding to $y = 0$ is evaluated as
  \begin{align*}
    \int_{\abs{\delta}}^{\infty} \frac{e^{-s}}{s}
    \exp \left( - \frac{\mu \abs{x}^2}{4 (1 + \mu^2) s} \right) \, ds
    & \lesssim 1 + \int_0^1 \frac{1}{s} \exp \left( -\frac{\mu \abs{x}^2}{4(\mu^2 + 1)s} \right) \, ds \\
    &= 1 + \int_{\abs{x}^2}^{\infty} \frac{1}{u} \exp \left( - \frac{\mu u}{4(\mu^2 + 1)} \right) \, du \\
    &\lesssim 1 + \log_+ \abs{x}^{-1}.
  \end{align*}
  For the terms corresponding to $y \neq 0$, we note that for large $m$,
  \begin{align*}
    \sum_{y \in M\Z^2 \setminus \{0\}} \int_{\abs{\delta}}^{\infty} \frac{e^{-s}}{s} \exp \left(
    -\frac{\mu \abs{x-y}^2}{4  (1 + \mu^2) s} \right) \, ds
    &\lesssim_m \sum_{y \in M\Z^2 \setminus \{0\}} \int_0^{\infty} \frac{e^{-s}}{s}
    \left( \frac{s}{\abs{x-y}^2} \right)^m \\
    &\lesssim_m \sum_{y \in M\Z^2 \setminus \{0\}} \frac{1}{\abs{x-y}^{2m}} \\
    &\sim_m M^{-2m}.
  \end{align*}
  Thus, we proved \eqref{eq:estimate_of_K_M}.

  We now prove \eqref{eq:esitimate_of_continuity_of_K_M}. Since $\mathscr{K}_M(-\tau;x) = \conj{\mathscr{K}_M}(\tau;x)$,
  we can assume $\tau \geq 0$. First
  suppose that $\delta > 0$. We note that
  $8 \pi (\mathscr{K}_M(\tau; x) - \mathscr{K}_M(\tau + \delta; x))$ is decomposed to
  \begin{multline}\label{eq:decomposition_of_K_M_minus_K_M_delta}
    \sum_{y \in M\Z^2} \int_{\tau}^{\tau + \delta} e^{-s} f_{\abs{x-y}^2}( i\tau + \mu s) \, ds \\
    + \sum_{y \in M \Z^2} \int_{\tau + \delta}^{\infty} e^{-s} [
    f_{\abs{x-y}^2}(i \tau + \mu s) - f_{\abs{x-y}^2}( i (\tau +\delta) + \mu s) ] \, ds ,
  \end{multline}
  where $f_a(z) \defby \frac{1}{z} \exp( - \frac{a}{4z} )$.
  We begin with the second term.
  The derivative $f_a'(z)$ equals to
  \begin{equation*}
    -\frac{1}{z^2} \exp \left( -\frac{a}{4z} \right) + \frac{a}{4 z^3} \exp \left( -\frac{a}{4z} \right).
  \end{equation*}
  For any $m \in (0, \infty)$ and $z = \mu s + it$ with $\abs{t} \leq s$, we see that
  \begin{equation*}
    \abs*{ \frac{a}{z} }^m \abs*{\exp \left( -\frac{a}{4z} \right)}
    \lesssim \left( \frac{a}{\mu s} \right)^m \exp \left( -\frac{\mu a}{(\mu^2 + 1)s} \right) \lesssim_m 1,
  \end{equation*}
  and that
  \begin{equation*}
    \abs*{\frac{1}{z^2} \exp \left( -\frac{a}{4 z} \right)} + \abs*{\frac{a}{z^3} \exp\left(- \frac{a}{4z} \right)} \lesssim_m \frac{\abs{z}^{m-2}}{a^m} .
  \end{equation*}
  Therefore, we get
  \begin{align*}
    \abs{ f_a(\mu s + i (\delta+ \tau)) - f_a(\mu s + i\tau)} &= \abs{ \int_{\tau}^{\tau+\delta} f_a'(\mu s + i t) \, dt } \\
    &\lesssim_m \frac{1}{a^m} \int_{\tau}^{\tau+\delta} \frac{dt}{[(\mu s)^2 + t^2]^{1- \frac{m}{2}}}.
  \end{align*}
  Furthermore, for $m \neq 1$,
  \begin{align*}
    \int_{\tau+\delta}^{\infty} e^{-s} \int_{\tau}^{\tau+\delta} \frac{dt}{[(\mu s)^2 + t^2]^{1- \frac{m}{2}}} \, ds
    &= \int_{\tau+\delta}^{\infty} \frac{e^{-s}}{s^{1-m}} \int_{\frac{\tau}{s}}^{\frac{\tau+\delta}{s}}
    \frac{dr}{(\mu^2 + r^2)^{1 - \frac{m}{2}}} \, ds \\
    &\lesssim \delta \int_{\tau+\delta}^{\infty} \frac{e^{-s}}{s^{2-m}} \, ds \lesssim \delta^{\min \{m, 1\}}.
  \end{align*}
  Therefore, the second term of \eqref{eq:decomposition_of_K_M_minus_K_M_delta} is bounded by
  \begin{equation*}
    \frac{\abs{\delta}^\lambda}{\abs{x}^{2\lambda}} +
    \sum_{y \neq 0} \frac{\abs{\delta}}{\abs{x-y}^{2m}} \lesssim \frac{\abs{\delta}^{\lambda}}{\abs{x}^{2\lambda}}.
  \end{equation*}
  The first term of \eqref{eq:decomposition_of_K_M_minus_K_M_delta} can be bounded similarly but more easily.
  Indeed,
  \begin{align*}
    \sum_{y \in M\Z^2} \int_{\tau}^{\tau + \delta} &\abs{f_{\abs{x-y}^2}(i\tau + \mu s)} ds \\
    &\lesssim \int_{\tau}^{\tau + \delta} \frac{  s^{\lambda-1}}{\abs{x}^{2\lambda}} ds
    + \sum_{y \in M\Z^2 \setminus \{0\}} \int_{\tau}^{\tau+\delta} \frac{s^{m-1}}{\abs{x}^{2m}}
    \lesssim \frac{\delta^{\lambda}}{\abs{x}^{2\lambda}}.
  \end{align*}
  Hence, we end the proof of \eqref{eq:esitimate_of_continuity_of_K_M} when $\delta > 0$.
  If $0 < - \delta \leq \tau$, we observe that
  \begin{equation*}
    \abs{\mathscr{K}_M(\tau + \delta; x) - \mathscr{K}_M(\tau; x)}
    = \abs{\mathscr{K}_M(\tau + \delta; x) - \mathscr{K}_M(\tau + \delta - \delta; x)}
    \lesssim \frac{\abs{\delta}^{\lambda}}{\abs{x}^{2\lambda}}.
  \end{equation*}
  Since $\mathscr{K}_M(-\delta; x) = \conj{\mathscr{K}_M}(\delta; x)$, the case $\tau = 0$ and $\delta < 0$ is proved.
  Finally, if $\tau < - \delta$,
  \begin{align*}
    \abs{\mathscr{K}_M(\tau+\delta; x) - \mathscr{K}_M(\tau; x)}
    &\leq \abs{\mathscr{K}_M(\tau+\delta; x) - \mathscr{K}_M(0; x)} +
    \abs{\mathscr{K}_M(\tau; x) - \mathscr{K}_M(0; x)} \\
    &\lesssim \frac{\abs{\tau+\delta}^{\lambda}}{\abs{x}^{2\lambda}} +\frac{\abs{\tau}^{\lambda}}{\abs{x}^{2\lambda}}
    \lesssim \frac{\abs{\delta}^{\lambda}}{\abs{x}^{2\lambda}}. \qedhere
  \end{align*}
\end{proof}
\begin{lemma}\label{lem:estimate_of_K_M_infty}
  We define
  \begin{equation}
    \mathscr{K}_{M, \infty}(x_1, x_2) \defby \int_0^{\infty}
    \int_{\torus_M} K_M(s, x_1 - y) \conj{K}(s, x_2 - y) \, dy \, ds,
  \end{equation}
  and suppose that $\abs{x_1}$, $\abs{x_2} \leq M/8$.
  Then for $M \in [1, \infty]$ and $m \geq 1$, we have
  \begin{align*}
    \abs{ \mathscr{K}_{M, \infty}(x_1, x_2) - \mathscr{K_{\infty}}(0;x_1 - x_2)} \lesssim_m \frac{1}{M^{2m}}, \\
    \abs{ \mathscr{K}_{M, \infty}(x_1, x_2) - \mathscr{K_{\infty}}(0;x_1 - x_2)} \lesssim_m \frac{1}{M^{2m}}, \\
    \abs{ \mathscr{K}_{M, \infty} (x_1, x_2) } \lesssim 1 + \log_{+} \abs{x_1 - x_2}^{-1} .
  \end{align*}
\end{lemma}
\begin{proof}
  See \cite[\largelem 12]{MW2dim}.
\end{proof}
\begin{theorem}\label{thm:main_estimate_of_OU}
  For each $M \in [1, \infty]$ and $k, l \in \N$, $Z_M^{:k, l:}$ has a distribution-valued modification.
  Moreover, for every $T \in (0, \infty)$, $\alpha \in (0, 1)$ and $p > \frac{2}{\alpha}$,
  there exists a constant $C \in (0, \infty)$ such that for all $M \in [1, \infty]$ and $\sigma \in (2, \infty)$,
  \begin{align*}
    &\expect [\sup_{0 \leq t \leq T} \norm{Z_M^{:k,l:}(t)}_{\pbesov_{p,p}^{-\alpha, \sigma}}] \leq C \int_{\R^2} w_{\sigma}(x) \, dx \\
    &\expect [\sup_{0 \leq t \leq T} \norm{Z_M^{:k,l:}(t)}_{\B_{p,p}^{-\alpha, M}}] \leq C M^2 \\
    &\expect [\sup_{0 \leq t \leq T} \norm{Z_M^{:k,l:}(t) - Z^{:k,l:}(t)}_{\pbesov_{p,p}^{-\alpha, \sigma}}] \leq C M^{2 - \sigma}.
  \end{align*}
\end{theorem}
\begin{proof}
  The strategy is to apply Lemma \ref{lem:modification_of_distribution}. Therefore we have to check that
  \eqref{eq:assumption_on_bound_of_Z_eta_k} and \eqref{eq:assumption_on_difference_of_Z_eta_k} hold.
  By Proposition \ref{prop:Nelson_estimate}, it comes down to covariance estimates.
  We set $ d\bm{s} d\bm{y} \defby ds_1 \cdots ds_{k+l} dy_1 \cdots dy_{k+l}$. We have
  \begin{align*}
    \expect [&\abs{\inp{Z_M^{:k,l:}(t)}{\phi}_M}^2 / k! l!] \\
      &\leq \int_{(\R \times \torus_M)^{k+l}}
      \abs{ \inp{\prod_{j=1}^k K_M(t-s_j, \cdot - y_j) \prod_{j=k+1}^{k+l} \conj{K_M}(t-s_j, \cdot - y_j)} {\phi}_M}^2
      \, d\bm{s} d\bm{y} \\
      &=\int_{(\R \times \torus_M)^{k+l}}
      \left[ \int_{\torus_M} \prod_{j=1}^k K_M(s_j, x_1 - y_j) \prod_{j=k+1}^{k+l} \conj{K_M}(s_j, x_1 - y_j) \phi(x_1) \, dx_1 \right]\\
      &\hspace{4em} \times\left[ \int_{\torus_M} \prod_{j=1}^k \conj{K_M}(s_j, x_2 - y_j) \prod_{j=k+1}^{k+l} K_M(s_j, x_2 - y_j) \conj{\phi}(x_2) \, dx_2 \right]
      d\bm{s} d\bm{y} \\
      &= \int_{\torus_M \times \torus_M}  dx_1 \, dx_2  \phi(x_1) \conj{\phi}(x_2)
      \left[ \int_{\R \times \torus_M}  K_M(s, x_1 - y) \conj{K_M}(s, x_1 - y) \,ds dy \right]^k \\
      &\hspace{13em} \times \left[ \int_{\R \times \torus_M}  \conj{K_M}(s, x_2 - y) K_M(s, x_2 - y) \,ds dy \right]^l.
  \end{align*}
  We note that
  \begin{multline*}
    \int_{\R \times \torus_M}  K_M(t_1 - s, x_1 - y) \conj{K_M}(t_2 - s, x_2 - y) \, ds dy \\
    = \int_{-\infty}^{\min\{t_1, t_2\}}  \sum_{z \in M \Z^2} \int_{\R^2} K(t_1 - s, x_1+z-y)
    \conj{K}(t_2 - s, x_2 - y) \, dy.
  \end{multline*}
  Chapman-Kolmogorov relation implies that for $s_1, s_2 >0$,
  \begin{multline*}
    \int_{\R^2} \frac{1}{4 \pi s_1} \exp \left(- \frac{\abs{x_1-y}^2}{4s_1} \right)
    \frac{1}{4 \pi s_2} \exp \left(- \frac{\abs{x_2-y}^2}{4s_2} \right) \, dy \\
    = \frac{1}{4 \pi (s_1 + s_2)} \exp \left( -\frac{\abs{x_1-x_2}^2}{4(s_1 + s_2)} \right).
  \end{multline*}
  Thus, by analytic continuation, we have
  \begin{multline*}
    \int_{\R^2} K(t_1-s, x_1 + z - y) \conj{K}(t_2-s, x_2 - y) \, dy\\
    = \frac{e^{-(t_1+t_2-2s)}}{4 \pi [i(t_1-t_2) + \mu(t_1+t_2 - 2s)]}
    \exp \left( -\frac{\abs{x_1+z-x_2}^2}{4[i(t_1-t_2) + \mu(t_1+t_2 - 2s)]} \right).
  \end{multline*}
  Therefore,
  \begin{equation*}
    \int_{\R\times \torus_M} K_M(t_1-s, x_1  - y) \conj{K_M}(t_2-s, x_2 - y)  ds dy
    = \mathscr{K}_M(t_1 - t_2; x_1 - x_2),
  \end{equation*}
  where $\mathscr{K}_M$ is defined by \eqref{eq:definition_of_K_M}.
  Consequently,
  \begin{equation*}
    \expect \abs{ \inp{Z_M^{:k,l:}(t)}{\phi}_M}^2 / k!l!
    \leq \int_{\torus_M \times \torus_M} dx_1 \, dx_2 \phi(x_1) \conj{\phi}(x_2) \mathscr{K}_M(0;x_1 - x_2)^{k+l}.
  \end{equation*}
  Similarly,
  \begin{align*}
    &\expect [\abs{ \inp{Z_M^{:k,l:}(t_1)}{\phi}_M - \inp{Z_M^{:k,l:}(t_2)}{\phi}_M }^2/k! l!] \\
    &\leq \int_{(\R\times\R^2)^{k+l}} \vert \langle \prod_{j=1}^k K_M(t_1 - s_j, \cdot - y_j)
    \prod_{j=k+1}^{k+l} \conj{K_M}(t_1 - s_j, \cdot - y_j) \\
    &\qquad -\prod_{j=1}^k K_M(t_2 - s_j, \cdot - y_j)
    \prod_{j=k+1}^{k+l} \conj{K_M}(t_2 - s_j, \cdot - y_j), \,\phi \rangle_M \vert^2 d\bm{s} d\bm{y}\\
    &= \int_{(\R\times\R^2)^{k+l}} \abs{\inp{\prod_{j=1}^k K_M(t_1 - s_j, \cdot - y_j) \prod_{j=k+1}^{k+l}
    \conj{K_M}(t_1 - s_j, x - y_j)}{\phi}_M}^2 d\bm{s} d\bm{y} \\
    &+ \int_{(\R\times\R^2)^{k+l}} \abs{\inp{\prod_{j=1}^k K_M(t_2 - s_j, \cdot - y_j) \prod_{j=k+1}^{k+l}
    \conj{K_M}(t_2 - s_j, x - y_j)}{\phi}_M}^2 d\bm{s} d\bm{y} \\
    &-2 \Re B,
  \end{align*}
  where
  \begin{align*}
    B
    &=\int_{(\R\times\R^2)^{k+l}} \inp{\prod_{j=1}^k K_M(t_1 - s_j, \cdot - y_j) \prod_{j=k+1}^{k+l}
    \conj{K_M}(t_1 - s_j, x - y_j)}{\phi}_M \\
    &\hspace{4em} \times \inp{\prod_{j=1}^k \conj{K_M}(t_2 - s_j, \cdot - y_j) \prod_{j=k+1}^{k+l}
    K_M(t_2 - s_j, x - y_j)}{\conj{\phi}}_M d\bm{s} d\bm{y}\\
    &= \int_{\torus_M \times \torus_M} dx_1 dx_2 \phi(x_1) \conj{\phi}(x_2) \\
    &\hspace{4em}\times \left(\int_{\R \times \torus_M} K_M(t_1-s, x_1 - y) \conj{K_M}(t_2-s, x_2-y) dy\right)^k \\
    &\hspace{4em}\times \left(\int_{\R \times \torus_M} \conj{K_M}(t_1-s, x_1 - y) K_M(t_2-s, x_2-y) dy\right)^l \\
    &= \int_{\torus_M \times \torus_M} dx_1 dx_2 \phi(x_1) \conj{\phi}(x_2) \mathscr{K}_M(t_1 - t_2; x_1 - x_2)^k
    \mathscr{K}_M(t_2 - t_1; x_1 - x_2)^l.
  \end{align*}
  As a result,
  \begin{align*}
    \expect &[\abs{ \inp{Z_M^{:k,l:}(t_1)}{\phi}_M - \inp{Z_M^{:k,l:}(t_2)}{\phi}_M }^2/k! l!] \\
    &\leq 2 \Re \int_{\torus_M \times \torus_M} dx_1 \, dx_2 \phi(x_1) \conj{\phi}(x_2) \\
    &\quad \times \left[ \mathscr{K}_M(0;x_1 - x_2)^{k+l} - \mathscr{K}_{M}(t_1 - t_2; x_1 - x_2)^k \mathscr{K}_{M}(t_2 - t_1; x_1 - x_2)^l \right].
  \end{align*}
  Finally, similar calculation yields
  \begin{align*}
    \expect &[\abs{\inp{Z_M^{:k,l:}(t)}{\phi}_{\infty} - \inp{Z^{:k,l:}(t)}{\phi}_{\infty}}^2/k!l!] \\
    &\leq  \Re \int_{\R^2 \times \R^2} dx_1 \, dx_2 \phi(x_1) \conj{\phi}(x_2) \\
    &\quad  \times \left( \mathscr{K}_M(0; x_1 - x_2)^{k+l}
    + \mathscr{K}(0;x_1 - x_2)^{k+l}
     -  2\mathscr{K}_{M,\infty}(x_1, x_2)^k \mathscr{K}_{M,\infty}(x_2, x_1)^l \right).
  \end{align*}
  Thanks to Lemma \ref{lem:esitemate_of_K_M} and \ref{lem:estimate_of_K_M_infty}, the remainder of the proof is the same
  as \cite[\largethm 5.1]{MW2dim}.
\end{proof}
\begin{remark}\label{rem:continuity_of_OU}
  Theorem \ref{thm:main_estimate_of_OU} is a consequence of Lemma \ref{lem:modification_of_distribution}.
  Therefore, the following claim is obvious:
  given $M \in [1, \infty)$, $T \in (0, \infty)$ and $\alpha \in (0, 1)$, there exists $\theta \in (0, 1)$ such that
  \begin{equation*}
    \expect[ \sup_{0 \leq s \leq t} \norm{Z_M(s) - Z_M(0)}_{\B^{-\alpha, M}_{\infty, \infty}} ]
    \lesssim_{M, T, \alpha} t^{\theta}
  \end{equation*}
  for every $t \leq T$.
\end{remark}

\subsection{Convergence of mollified processes}\label{subsec:convergence_of_mollified_processes}
Take $\rho \in \S$ with $\int \rho = 1$
and set $\rho_{\delta} \defby \delta^{-4} \rho(\frac{t}{\delta^2}, \frac{x}{\delta})$.
We consider a stationary solution of
\begin{equation*}
  \partial_t Z_{M;\delta} = A Z_{M;\delta} + \xi_{M; \delta},
\end{equation*}
where $\xi_{M;\delta} \defby \xi_M * \rho_{\delta}$ is a mollified noise.
Applying a three dimensional version of Lemma \ref{lem:modification_of_distribution}, we can realize
$\xi_M$ as a distribution in $\B^{-3/2 - \epsilon, M}_{\infty, \infty}$ if $M < \infty$ or
as a distribution in $\B^{-3/2 - \epsilon, \sigma}_{p, p}$ for all $M \in [1, \infty]$ with $p \in [1, \infty)$.
Then $Z_{M;\delta}$ is given by
\begin{equation*}
  Z_{M;\delta}(t; x)= \int_{-\infty}^t \left(e^{(t-s)A} \xi_{M;\delta}(s, \cdot)\right)(x) \, ds.
\end{equation*}
Take $\psi_n \in C_c^{\infty}(\R; [0, 1])$ such that $\psi_n \equiv 1$ on $[1/n, n]$ and
$\supp(\psi_n) \subset [1/n^2, n^2]$. Then $\psi_n K = \psi_n(t) K(t, x) \in \S$ and
\begin{align*}
  Z_{M;\delta}(t; x) &= \lim_{n \to \infty} \int_{\R \times \R^2} \psi_n(t-s) K(t-s, x-y) \xi_{M;\delta}(s, y) ds dy \\
  &= \lim_{n \to \infty} [ (\psi_n K) * (\rho_{\delta} * \xi_M) ] (t, x) \\
  &= \lim_{n \to \infty} [ ((\psi_n K)*\phi_{\delta}) * \xi_M] (t, x) \\
  &= \lim_{n \to \infty} \int_{\R \times \R^2} (\psi_n K) * \rho_{\delta}(t-s, x-y) \xi_M(ds dy).
\end{align*}
Since $\lim_{n \to \infty} (\psi_n K) * \rho_{\delta} = K * \rho_{\delta}$ in $L^2(\R \times \R^2)$, we obtain
\begin{equation*}
  Z_{M;\delta}(t; x) = \int_{\R \times \R^2} \indic_{\torus_M}(x) K_{M;\delta}(t-s, x-y) \xi(ds dy)
\end{equation*}
where $K_{M;\delta} \defby K_M * \rho_{\delta}$.
Then Wick products $Z_{M;\delta}^{:k,l:}(t;x)$ are given by
\begin{multline*}
  Z_{M;\delta}^{:k, l:}(t;x) = \int_{\R \times \torus_M} \prod_{j=1}^k K_{M;\delta}(t-s_j, x-y_j)
  \prod_{j=k+1}^{k+l} \conj{K_{M;\delta}}(t-s_j, x-y_j) \\
  \xi(ds_1 dy_1) \cdots \xi(ds_k dy_k)
  \conj{\xi}(ds_{k+1} dy_{k+1}) \cdots \conj{\xi}(ds_{k+l}  dy_{k+l}).
\end{multline*}
By Corollary \ref{cor:multiple_integral_and_hermite_polynomial}, we have
$Z_{M;\delta}^{:k,l:}(t;x) = H_{k,l}(Z_{M;\delta}(t;x), c_{M;\delta})$
where
\begin{equation} \label{eq:def_of_renormalized_constant}
  c_{M;\delta} \defby \int_{\R \times \torus_M} \abs{K_{M;\delta}(s,y)}^2 \, ds dy
\end{equation}
In particular,
\begin{align*}
  Z^{:2,0:}_{M;\delta}(t; x) &= Z_{M;\delta}^2(t; x), \\
  Z^{:1, 1:}_{M; \delta}(t; x) &= \abs{Z_{M;\delta}(t;x)}^2 - c_{M;\delta}, \\
  Z^{:2, 1:}_{M; \delta}(t; x) &= \abs{Z_{M;\delta}(t;x)}^2 Z_{M;\delta}(t,x) - 2c_{M;\delta} Z_{M;\delta}(t;x).
\end{align*}
\begin{proposition}\label{prop:renormalization_constant}
  We have, as $\delta \to 0$,
  \begin{equation*}
    c_{M;\delta} = \frac{1}{4\pi \mu} \log_+ \delta^{-1} + O(1).
  \end{equation*}
\end{proposition}
\begin{proof}
  We have
  \begin{equation*}
    c_{M;\delta} = \int_{\R \times \R^2} \rho(s_1, y_1) \rho(s_2, y_2) \mathscr{K}_M
    (\delta^2(s_2 - s_1); \delta(y_1 - y_2)) ds_1 ds_2 dy_1 dy_2.
  \end{equation*}
  By \eqref{eq:estimate_of_K_M},
  \begin{multline*}
    \int_{\max\{\abs{s_1}, \abs{s_2}\} \geq \delta^{-1}} \rho(s_1, y_1) \rho(s_2, y_2)\\ \times \mathscr{K}_M
    (\delta^2(s_2 - s_1); \delta(y_1 - y_2)) ds_1 ds_2 dy_1 dy_2  = O(1).
  \end{multline*}
  and by \eqref{eq:esitimate_of_continuity_of_K_M},
  \begin{multline*}
    \int_{\abs{s_1}, \abs{s_2} \leq \delta^{-1}} \rho(s_1, y_1) \rho(s_2, y_2) \\ \times \abs{\mathscr{K}_M
    (\delta^2(s_2 - s_1); \delta(y_1 - y_2)) - \mathscr{K}_M(0; \delta(y_1 -y_2))} ds_1 ds_2 dy_1 dy_2 = O(1).
  \end{multline*}
  Therefore, it suffices to show that
  \begin{equation*}
    \mathscr{K}_M(0; x) = \frac{1}{4\pi \mu} \log_+ \abs{x}_M^{-1} + O(1).
  \end{equation*}
  We can assume $x \in \torus_M$. As shown in the proof of \eqref{eq:estimate_of_K_M},
  \begin{equation*}
    \sum_{y \in M\Z^2 \setminus \{0\}} \int_0^{\infty} \frac{e^{-s}}{8 \pi \mu s} \exp
    \left(-\frac{\abs{x-y}^2}{4\mu s} \right) ds = O(1).
  \end{equation*}
  Then it remains to observe that
  \begin{align*}
    \int_0^{\infty}\frac{e^{-s}}{8\pi\mu s} \exp\left(-\frac{\abs{x}^2}{4\mu s} \right)
    &= \int_0^1 \frac{1}{8 \pi \mu s} \exp \left( - \frac{\abs{x}^2}{4 \mu s} \right) ds + O(1) \\
    &= \int_{\frac{\abs{x}^2}{4\mu}}^{\infty} \frac{1}{8 \pi \mu r} e^{-r} dr + O(1) \\
    &= \int_{\min\{1, \frac{\abs{x}^2}{4\mu} \}}^1 \frac{1}{8 \pi \mu r} dr + O(1) \\
    &= \frac{1}{4 \pi \mu} \log_+ \abs{x}^{-1} + O(1). \qedhere
  \end{align*}
\end{proof}
\begin{theorem} \label{thm:convergence_of_mollified_processes}
  For every $M \in [1, \infty)$, $T \in (0, \infty)$, $\alpha \in (0, 1)$ and $p \in [1, \infty)$, we have
  \begin{align*}
    \lim_{\delta \to 0} \expect[ \sup_{0 \leq t \leq T} \norm{Z_{M;\delta}^{:k,l:}(t) - Z_{M}^{:k,l:}(t)}_{\B_{p,p}^{-\alpha, M}}^p ] &= 0, \\
    \lim_{\delta \to 0}  \expect[ \sup_{0\leq t \leq T} \norm{Z_{\delta}^{:k,l:}(t) - Z^{:k,l:}(t)}_{\pbesov_{p,p}^{-\alpha, \sigma}}^p] &= 0.
  \end{align*}
\end{theorem}
\begin{remark}
  By Proposition \ref{prop:besov_embedding}, for $\alpha, p \in (0, \infty)$,
  \begin{equation*}
    \lim_{\delta \to 0} \expect[ \sup_{0 \leq t \leq T} \norm{Z_{M;\delta}^{:k,l:} - Z_M^{:k,l:}(t)}_{\B^{-\alpha,M}_{\infty, \infty}}^p ] = 0.
  \end{equation*}
\end{remark}
\begin{proof}
  \emph{STEP 1.}
  We first show that for each $\lambda, \epsilon \in (0, 1)$
  there exists a constant $C = C(\lambda, \epsilon)$, independent of $\delta \in (0, 1)$, such that
  \begin{equation}\label{eq:difference_of_Z_M_delta}
    \expect [\abs{\inp{Z_{M;\delta}^{:k,l:}(t_1) - Z_{M;\delta}^{:k,l:}(t_2)}{\eta_j(x-\cdot)}_{\infty}}^2/k!l! ]
    \leq C \abs{t_1 - t_2}^{\lambda} 2^{2 j \lambda(1+\epsilon)}.
  \end{equation}
  We have
  \begin{align*}
    \expect [\vert &\inp{Z_{M;\delta}^{:k,l:}(t_1) - Z_{M;\delta}^{:k,l:}(t_2)}{\eta_j(x-\cdot)}\vert^2/k!l!]\hspace{15em} \\
    &\hspace{4em}\leq 2 \Re \int_{\R^2 \times \R^2} dx_1 dx_2 \eta_j(x_1) \eta_j(x_2) \\
    &\hspace{5em} \times \Big[ \left( \int_{\R \times \torus_M} K_{M;\delta}(s, x_1 - y) \conj{K_{M;\delta}}(s, x_2 - y) \, ds dy\right)^{k}\\
    &\hspace{5em}\times \left( \int_{\R \times \torus_M} K_{M;\delta}(s, x_2 - y) \conj{K_{M;\delta}}(s, x_1 - y) \, ds dy\right)^{l} \\
    &\hspace{5em}- \left( \int_{\R \times \torus_M} K_{M;\delta}(t_1 - s, x_1 - y) \conj{K_{M;\delta}}(t_2 - s, x_2 - y) \, ds dy \right)^k \\
    &\hspace{5em}\times \left( \int_{\R \times \torus_M} K_{M;\delta}(t_2 - s, x_2 - y) \conj{K_{M;\delta}}(t_1 - s, x_1 - y) \, ds dy\right)^l \Big] ,
  \end{align*}
  and
  \begin{align*}
    \int_{\R \times \torus_M} &K_{M;\delta}(t_1 - s, x_1 - y) K_{M;\delta}(t_2 - s, x_2 - y) \, ds dy \\
    &= \int_{(\R \times \R^2)^2} \rho_{\delta}(s_1, y_1) \rho_{\delta}(s_2, y_2) \\
    &\qquad \times \mathscr{K}_M(t_1 - t_2 - (s_1 - s_2); x_1 - x_2 - (y_1 - y_2))  ds_1 dy_1 ds_2 dy_2.
  \end{align*}
  By Lemma \ref{lem:esitemate_of_K_M} and Jensen inequality, we see that the left hand side of \eqref{eq:difference_of_Z_M_delta}
  is bounded by
  \begin{multline}\label{eq:bound_of_difference_of_Z_M_delta_1}
    C \abs{t_1 - t_2}^{\lambda} \int dx_1 dx_2 \,\abs{\eta_j(x_1)}\abs{\eta_j(x_2)}
    \left(\int \tilde{\rho}_{\delta}(x_1 - y_1) \tilde{\rho}_{\delta}(x_2 - y_2)
    \frac{dy_1 dy_2}{\abs{y_1 - y_2}_M^{2\lambda}} \right)\\
    \times \left(\int \tilde{\rho}_{\delta}(x_1 - y_1) \tilde{\rho}_{\delta}(x_2 - y_2)
    (1 + \log_+^{k+l-1} \abs{y_1 - y_2}_M^{-1}) \, dy_1 dy_2\right)
  \end{multline}
  where $\tilde{\rho}_{\delta}(x) \defby \int \rho_{\delta}(s, x) \, ds$.
  The inequality $\log_+^{k+l-1} \abs{y_1 - y_2}_M^{-1} \lesssim_{k,l, \epsilon_1} \abs{y_1 - y_2}_M^{- \epsilon_1}$
  for every $\epsilon_1 \in (0, 1)$ and
  Jensen's inequality implies that the integral in \eqref{eq:bound_of_difference_of_Z_M_delta_1} is bounded by
  \begin{equation} \label{eq:bound_of_difference_of_Z_M_delta_2}
    \int_{\R^2 \times \R^2} \tilde{\rho}_{\delta} * \abs{\eta_j}(x_1) \tilde{\rho}_{\delta} * \abs{\eta_j}(x_2)
    \frac{dx_1 dx_2}{\abs{x_1 - x_2}_M^{\lambda_1}},
  \end{equation}
  where $\lambda_1 \defby 2 \lambda (1 + \epsilon)$.
  Therefore, to prove \eqref{eq:difference_of_Z_M_delta}, it suffices to show that
  \eqref{eq:bound_of_difference_of_Z_M_delta_2} is bounded by $C 2^{j \lambda_1}$.
  We focus on the case $j \geq 0$, since the case $j = -1$ is the same as the case $j=0$.
  We note that $\tilde{\rho}_{\delta} * \abs{\eta_j}(x) = 2^{2j} \tilde{\rho}_{2^j \delta} * \abs{\eta_0}(2^j x)$.
  We proceed to check that the integral
  \begin{align*}
    \int \abs{\eta_0(x_1)} &\abs{\eta_0(x_2)} \frac{dx_1 dx_2}{\abs{x_1 - x_2 - y}^{\lambda_1}}
    = \int \abs{\eta_0(x_1 + x_2)} \abs{\eta_0(x_2)} \frac{dx_1 dx_2}{\abs{x_1 - y}^{\lambda_1}} \\
    &\lesssim \int_{\abs{x_1 - y} \leq 1} \frac{dx_1}{\abs{x_1 - y}^{\lambda_1}}
    + \int_{\abs{x_1 - y} \geq 1} \abs{\eta_0(x_1 + x_2)} \abs{\eta_0(x_2)} \, dx_1 dx_2 \\
    &\lesssim 1
  \end{align*}
  can be bounded uniformly for all $y \in \R^2$.
  Furthermore, if $\abs{x} \geq 2 \delta \supp (\tilde{\rho})$, we have, uniformly for $\delta$,
  \begin{equation*}
    \abs{\tilde{\rho}_{\delta} * \eta_j (x)}
    \lesssim_m \int \tilde{\rho}_{\delta}(y) \frac{2^{2j}}{(2^j \abs{x - y})^m} \, dy
    \lesssim_m \frac{1}{2^{(m-2)j} \abs{x}^m}.
  \end{equation*}
  Now we see that the integral \eqref{eq:bound_of_difference_of_Z_M_delta_2} in the region
  $\set{(x_1, x_2) \given \abs{x_1}, \abs{x_2} \leq M/4}$
  is bounded by
  \begin{align*}
    \int_{\R^2 \times \R^2} \rho_{2^j \delta} *& \abs{\eta_0} (x_1) \rho_{2^j \delta} * \abs{\eta_0}(x_2)
    \frac{dx_1 dx_2}{\abs{2^{-j}(x_1 - x_2)}^{\lambda_1}} \\
    &\leq 2^{j \lambda_1} \int dy_1 dy_2 \tilde{\rho}(y_1) \tilde{\rho}(y_2)
    \int \frac{\abs{\eta_0}(x_1) \abs{\eta_0}(x_2) \, dx_1 dx_2}{\abs{x_1 - x_2 - 2^j \delta (y_1 - y_2)}^{\lambda_1}} \\
    &\lesssim 2^{j \lambda_1}.
  \end{align*}
  The integral \eqref{eq:bound_of_difference_of_Z_M_delta_2} in the region
  $\set{(x_1, x_2) \given \abs{x_1} \geq M/4 \mbox{ and } \abs{x_2} \leq M/4}$
  is bounded by
  \begin{equation*}
    C \int_{\abs{x_1} \geq M/4, \,\,  \abs{x_2} \leq M/4} \frac{1}{(1+\abs{x_1}^2)^m}
    \rho_{\delta} * \abs{\eta_j}(x_2) \frac{dx_1 dx_2}{\abs{x_1 - x_2}_M^{\lambda_1}}.
  \end{equation*}
  When $\abs{y} \leq M/4$, we have a uniform estimate
  \begin{align*}
    \int_{\R^2} \frac{1}{(1+\abs{x}^2)^m} \frac{dx}{\abs{x-y}_M^{\lambda_1}}
    &= \sum_{z \in M \Z^2} \int_{y + z + \torus_M} \frac{1}{(1+\abs{x}^2)^m} \frac{dx}{\abs{x-y-z}_M^{\lambda_1}} \\
    &\lesssim 1 + \sum_{z \in M \Z^2 \setminus \{0\}} \frac{1}{\abs{z}^{2m}} \int_{\torus_M} \frac{dx}{\abs{x}^{\lambda_1}} \\
    &\lesssim 1,
  \end{align*}
  and the $L^1$-norm of $\rho_{\delta} * \abs{\eta_j}$ is uniformly bounded with respect to $\delta$ and $j$.
  The integral \eqref{eq:bound_of_difference_of_Z_M_delta_2} in the region
  $\set{(x_1, x_2) \given \abs{x_1} \geq M/4 \mbox{ and } \abs{x_2} \geq M/4}$
  is bounded by
  \begin{equation*}
    \int_{\R^2 \times \R^2} \frac{1}{(1+\abs{x_1}^2)^m} \frac{1}{(1+\abs{x_2}^2)^m}
    \frac{dx_1 dx_2}{\abs{x_1 - x_2}_M^{\lambda_1}} \lesssim 1.
  \end{equation*}
  Therefore, we conclude that \eqref{eq:bound_of_difference_of_Z_M_delta_2} is bounded by $C 2^{j \lambda_1}$.

  \emph{STEP 2.}
  We continue to evaluate
  \begin{equation}\label{eq:difference_between_Z_and_Z_delta}
    a(j, \delta) \defby \expect [\abs{ \inp{Z_{M;\delta}^{:k,l:}(t) - Z_{M}^{:k,l:}(t)}{\eta_j(x - \cdot)}}^2] ,
  \end{equation}
  which is bounded by $k!l!$ times
  \begin{multline*}
    \Re \int_{\R^2 \times \R^2} dx_1 dx_2 \eta_j(x_1) \eta_j(x_2) \\
    \times \Big[ \left( \mathscr{K}_{M; \delta}(x_1 - x_2) \right)^{k+l}
    + \mathscr{K}_M(0; x_1 - x_2)^{k+l} \\
    - 2 \left( \int \rho_{\delta}(s_1, y_1) \mathscr{K}_M(s_1; x_1 - x_2 - y_1) \, ds_1 dy_1 \right)^{k+l} \Big],
  \end{multline*}
  where
  \begin{equation*}
    \mathscr{K}_{M;\delta}(x) \defby \int \rho_{\delta}(s_1, y_1) \rho_{\delta}(s_2, y_2)
    \mathscr{K}_M(s_2 - s_1; x_1 - x_2 - (y_1 - y_2)) \, ds_1 ds_2 dy_1 dy_2 .
  \end{equation*}
  For fixed $x_1, x_2$ with $x_1 - x_2 \notin M\Z^2$,
  \begin{equation*}
    \lim_{\delta \to 0} \mathscr{K}_{M;\delta}(x_1 - x_2) = \mathscr{K}_M(0; x_1 - x_2).
  \end{equation*}
  \emph{STEP 1} shows that for every $n$,
  \begin{equation*}
    \sup_{\delta \in (0, 1)} \int \abs{\eta_j(x_1)} \abs{\eta_j(x_2)} \abs{\mathscr{K}_{M; \delta}(x_1 - x_2)}^n \, dx_1 dx_2
    < \infty.
  \end{equation*}
  Therefore,
  \begin{multline*}
    \lim_{\delta \to 0} \int \eta_j(x_1) \eta_j(x_2) \mathscr{K}_{M; \delta}(x_1 - x_2) \, dx_1 dx_2 \\
    = \int \eta_j(x_1) \eta_j(x_2) \mathscr{K}_M(0; x_1 - x_2) \, dx_1 dx_2 .
  \end{multline*}
  Furthermore, calculation similar to the proof of \emph{STEP 1} shows that for every $\epsilon \in (0, 1)$,
  $2^{- \epsilon j} a(j, \delta)$ (see \eqref{eq:difference_between_Z_and_Z_delta}) is
  uniformly bounded with respect to $j$ and $\delta$.
  Hence, for $\nu \in (0, 1)$,
  \begin{align*}
    &\expect[ \sum_{j \geq -1} 2^{-j \alpha p} \norm{ \inp{Z_{M;\delta}^{:k,l:}(t) - Z_{M;\delta}^{:k,l:}(s)
    - (Z^{:k,l:}_M(t) - Z^{:k,l:}_M(s))}{\eta_j(x - \cdot)}_{\infty} }_{L^p(\torus_M, dx)}^p ] \\
    &= \sum_{j \geq -1} 2^{-j \alpha p} \int_{\torus_M} \expect [
    \abs{\inp{Z_{M;\delta}^{:k,l:}(t) - Z_{M;\delta}^{:k,l:}(s) - (Z^{:k,l:}_M(t) - Z^{:k,l:}_M(s))}{\eta_j(x - \cdot)}_{\infty}}^p] \, dx \\
    &\lesssim \sum_{j \geq -1} 2^{-j \alpha p} \left( \abs{t-s}^{\lambda} 2^{j \lambda_1}\right)^{\frac{\nu p}{2}}
    a(j, \delta)^{\frac{(1-\nu)p}{2}} \\
    &= \abs{t-s}^{\frac{p\nu \lambda}{2}} \sum_{j \geq -1} 2^{-j (\alpha - \frac{\nu \lambda_1}{2}) p  } a(j, \delta)^{\frac{(1-\nu)p}{2}}.
  \end{align*}
  By Lebesgue's dominated convergence theorem,
  \begin{equation*}
      \lim_{\delta \to 0} \sum_{j \geq -1} 2^{-j (\alpha - \frac{\nu \lambda_1}{2}) p } a(j, \delta)^{\frac{(1-\nu)p}{2}} = 0.
  \end{equation*}
  Therefore, Kolmogorov continuity theorem finishes the proof.
\end{proof}
\begin{remark}\label{remark:convergence_of_mollified_processes}
  As the above proof suggests, time regularization is unnecessary.
  In \cite{TW18}, they use smooth approximations by
  \begin{equation*}
    \tilde{Z}_{M;\delta}(t; x) \defby \inp{Z_M(t)}{\tilde{\rho}_{\delta}(x - \cdot)}_{\infty},
  \end{equation*}
  where $\tilde{\rho} \in \S$ and $\tilde{\rho}_{\delta}(x) \defby \delta^{-2} \tilde{\rho}(\delta^{-1} x)$.
  Although space-time regularization $Z_{M;\delta}$ seems natural in view of renormalization,
  it is not $(\F_t)$-adapted. Therefore, space regularization is crucial to study
  stochastic properties of solutions of \eqref{eq:CGL}.
\end{remark}

\subsection{Nonstationary Ornstein-Uhlenbeck processes}\label{subsection:nonstationary_OU}
We have constructed Ornstein-Uhlenbeck processes starting from $t = -\infty$.
An advantage is that the processes are stationary in time and the renormalized constants are independent of time.
However, we have to consider nonstationary Ornstein-Uhlenbeck processes later in order to study probabilistic properties
of solutions. For the sake of simplicity, we restrict our analysis to the case $\torus_M$ with $M < \infty$.

To begin with, it is convenient to introduce an algebraic structure for Wick product.
Let $\mathcal{R} \defby \B^{\alpha, M}_{\infty, \infty}$ for some $\alpha \in (0, \infty)$.
$\mathcal{R}$ is a ring with the usual product, see Corollary \ref{cor:multiplicative_inequality}.
Furthermore, Corollary \ref{cor:multiplicative_inequality} allows us to multiply an element of $\mathcal{R}$ and
$Z_M^{:k,l:}(t)$. Therefore, the space
\begin{equation*}
  \mathcal{M}_t \defby  \operatorname{span}_{\mathcal{R}} \set{Z_M^{:k,l:}(t) \given k, l \in \N}
\end{equation*}
is a commutative $\mathcal{R}$-algebra with multiplication
\begin{equation*}
   Z_M^{:k_1, l_1:}(t) \wickproduct Z_M^{:k_2, l_2:}(t)  \defby Z_M^{:k_1 + k_2, l_1 + l_2:}(t).
\end{equation*}
We also have an operation of complex conjugate
\begin{equation*}
  \conj{Z_M^{:k,l:}(t)} = Z_M^{:l, k:}(t),
\end{equation*}
where the equality holds as distribution.
We write for $X \in \mathcal{M}_t$,
\begin{equation*}
  X^{:k,l:} \defby \underbrace{X \wickproduct \cdots \wickproduct X}_{k\, \mbox{ times}} \wickproduct
  \underbrace{\conj{X} \wickproduct \cdots \wickproduct \conj{X}}_{l\, \mbox{ times}}.
\end{equation*}

Now we set $Z_M(s, t) \defby Z_M(t) - e^{(t-s)A} Z_M(s) \in \mathcal{M}_t$ for $s < t$, where we regard
$e^{(t-s)A} Z_M(s) \in \mathcal{R}$. We see that for each $\phi \in \S$, almost surely,
\begin{equation*}
  \inp{Z_M(s, t)}{\phi} = \int_{\R \times \R^2} \indic_{[s, t]}(r) \indic_{\torus_M}(y)
  \inp{K_M(t-r, \cdot - y)}{\phi} \xi(dr dy).
\end{equation*}
Indeed, this can be proved by first showing the corresponding identities for mollified processes and then taking the limit.
See the proof of Proposition \ref{prop:independence_of_Z_M_s_t} below. We also set for $s < t$
\begin{align*}
  Z^{:k,l:}_M(s, t) &\defby (Z_M(s, t))^{:k, l:} \\ &= \sum_{\substack{0\leq i \leq k, \\ 0\leq j \leq l}}
  \binom{k}{i} \binom{l}{j} (-1)^{i+j} \left( e^{(t-s)A} Z_M(s) \right)^{:i,j:} Z_M^{:k-i, l-j:}(t).
\end{align*}
We have the identity
\begin{equation}
  Z_M^{:k,l:}(s, t+h) = \sum \binom{k}{i} \binom{l}{j} (e^{hA} Z_M(s, t))^{:i,j:} Z_M(t, t+h)^{:k-i,l-j:}
\end{equation}
for $s < t$ and $h \in (0, \infty)$. In fact,
\begin{align*}
  Z_M^{:k,l:}(s, t+h) &= \left( Z_M(t+h) - e^{(t+h-s)A} Z_M(s) \right)^{:k,l:} \\
  &=\left( Z_M(t, t+h) + e^{hA} Z_M(t) - e^{hA} e^{(t-s)A} Z_M(s) \right)^{:k,l:} \\
  &= \left(Z_M(t, t+h) + e^{hA} Z_M(s, t) \right)^{:k,l:} \\
  &= \sum \binom{k}{i} \binom{l}{j} \left( e^{hA} Z_M(s, t) \right)^{:i, j:} Z_M(t, t+h)^{:k-i, l-j:}.
\end{align*}
\begin{proposition}\label{prop:L^p_estimate_of_nonstationary_OU}
  Let $p \in [1, \infty)$, $\alpha, \alpha' \in (0, 1)$, $T \in (0, \infty)$ and $s \in \R$.
  Then for each $k, l \in \N$,
  \begin{equation*}
    \expect[ \sup_{0 \leq t \leq T} t^{(k+l-1)\alpha' p} \norm{Z^{:k,l:}_M(s, s+t)}_{\B^{-\alpha, M}_{\infty, \infty}}^p ]
    \lesssim_{p, \alpha, \alpha', k, l, T} 1.
  \end{equation*}
\end{proposition}
\begin{proof}
  Let $\contisp^{-\alpha} \defby \B^{-\alpha, M}_{\infty, \infty}$ and $V(t) \defby e^{tA} Z_M(s)$.
  We have to estimate each term of
  \begin{equation*}
    \sum_{i,j} \binom{k}{i} \binom{l}{j} (-1)^{i+j} V(t)^{:i,j:} Z_M(s+t)^{:k-i, l-j:}.
  \end{equation*}
  For $(i, j) = (k, l)$,
  \begin{equation*}
    \norm{V(t)^{:k,l:}}_{\contisp^{-\alpha}} \lesssim \norm{V(t)}_{\contisp^{2\alpha}}^{k+l-1} \norm{V(t)}_{\contisp^{-\alpha}}
    \lesssim t^{-\frac{3(k+l-1)\alpha}{2}}\norm{Z_M(s)}_{\contisp^{-\alpha}}^{k+l}.
  \end{equation*}
  Otherwise,
  \begin{align*}
    \norm{V(t)^{:i,j:} Z_M(s+t)^{:k-i,l-j:}}_{\contisp^{-\alpha}}
    &\lesssim \norm{V(t)}^{i+j}_{\contisp^{2\alpha}} \norm{Z_M(s+t)^{:k-i, l-j:}}_{\contisp^{-\alpha}}\\
    &\lesssim t^{-\frac{3(i+j)\alpha}{2}} \norm{Z_M(s)}_{\contisp^{-\alpha}}^{i+j} \norm{Z_M(s+t)^{:k-i, l-j:}}_{\contisp^{-\alpha}}.
  \end{align*}
  with $i + j \leq k + l -1$.
  Therefore, Theorem \ref{thm:main_estimate_of_OU} ends the proof.
\end{proof}
\begin{remark}\label{rem:continuity_of_nonstationary_OU}
  We have
  \begin{equation*}
    Z_M(0, t) = Z_M(t) - Z_M(0) + (1 - e^{tA}) Z_M(0).
  \end{equation*}
  According to Remark \ref{rem:continuity_of_OU} and Proposition \ref{prop:time_regularity_of_the_heat_flow_in_besov},
  we have, for some $\theta \in (0, 1)$,
  \begin{equation*}
    \expect[\sup_{0 < s \leq t} \norm{Z_M(0, s)}_{\contisp^{-\alpha}} ] \lesssim_{M, \alpha} t^{\theta},
    \quad t \in (0, 1).
  \end{equation*}
\end{remark}
\begin{proposition}\label{prop:independence_of_Z_M_s_t}
  For fixed $s \in \R$, $\set{Z_M^{:k,l:}(s, t) \given t \in (s, \infty), \, k, l \in \N}$ is independent of $\F_s$.
\end{proposition}
\begin{proof}
  According to Remark \ref{remark:convergence_of_mollified_processes},  $Z_M^{:k, l:}(s, t)$
  is the limit of
  \begin{multline*}
    \sum \binom{k}{i} \binom{l}{j} (-1)^{i+j} \left[e^{(t-s)A} Z_{M; \delta}(s) \right]^{:i,j:} \tilde{Z}_{M;\delta}^{:k-i, l-j:}(t) \\
    = H_{k, l}(\tilde{Z}_{M; \delta}(t) - e^{(t-s)A} \tilde{Z}_{M;\delta}(s), c_{M;\delta})
  \end{multline*}
  as $\delta \to 0$. Here $H_{k,l}$ is a complex Hermite polynomial and
  the equality results from Proposition \ref{prop:properties_of_hermite_polynomials}-(iii).
  It remains to notice that
  \begin{align*}
    Z_{M;\delta}(t;x) &- e^{(t-s)A} Z_{M;\delta}(s;x) \\
    &= \int \indic_{\torus_M}(y) [K_M(t-r, \cdot) * \rho_{\delta}](x-y) \xi(drdy) \\
    &\hspace{3em} - \int \indic_{\torus_M}(y) \left\{ e^{(t-s)A}[K_M(s-r, \cdot)*\rho_{\delta}] \right\} (x-y) \xi(drdy) \\
    &= \int \indic_{[s,t]}(r) (K_M(t-r, \cdot) * \rho_{\delta})(x-y) \xi(drdy),
  \end{align*}
  is independent of $\F_s$.
\end{proof}

\section{Construction of solutions on the torus}\label{sec:solving_CGL}
\subsection{A strategy and the main theorem}\label{subsec:review_of_main_thm}
In this section, we construct solutions on the torus $\torus_M$ as in \cite[Section 6]{MW2dim}.
As the actual value of $M$ is irrelevant, we set $M = 1$.
As noted in the introduction, the equation \eqref{eq:CGL} is ill-defined due to the low regularity of the white noise $\xi$.
To overcome this difficulty, we employ a strategy first introduced in \cite{DD03}.
Namely, we decompose a solution $u = Z + Y$, where $Z$ is the Ornstein-Uhlenbeck process constructed
in Section \ref{section:she}. Then $Y$ formally solves the equation \eqref{eq:pde_of_Y} below.
We regard powers of $Z$ as Wick powers constructed in Section \ref{section:she}.
Then, in the light of Schauder's estimate (Proposition \ref{prop:smoothing_in_besov}),  the regularity of $Y$
should be $2-\epsilon$, and therefore all the products in \eqref{eq:pde_of_Y} are well-defined thanks to
Corollary \ref{cor:multiplicative_inequality}.
Now the idea from rough path theory comes into play. After ill-definedness has disappeared via probabilistic methods,
we will solve the equation in a deterministic framework.

In this section, we will write $L^2$ for $L^2(\torus)$ and et cetera.
We set $\contisp^{\alpha} \defby \B_{\infty, \infty}^{\alpha}$ and $\B^{\alpha}_{p} \defby \B_{p, \infty}^{\alpha}$.
For $\underline{Z} = (Z, Z^2, \abs{Z}^2, \abs{Z}^2 Z)$ in the set
\begin{equation*}
  C([0, T]; \contisp^{-\alpha})
  \times
  \big( C((0, T]; \contisp^{-\alpha}) \big)^3,
\end{equation*}
we write
\begin{equation*}
  \norm{\underline{Z}_t}_{\zset_{\alpha}}  \defby  \max \{
  \norm{Z(t)}_{\contisp^{-\alpha}},   t^{\alpha}\norm{Z^2(t)}_{\contisp^{-\alpha}},
  \,t^{\alpha} \norm{\abs{Z}^2}_{\contisp^{-\alpha}}, \,t^{\alpha} \norm{\abs{Z}^2 Z}_{\contisp^{-\alpha}} \}.
\end{equation*}
We emphasize that $Z^2$, $\abs{Z}^2$ and $\abs{Z}^2 Z$ are not functions of $Z$.
We denote by $\zset$ the set
\begin{equation*}
  \set{\underline{Z} \in C([0, \infty); \contisp^{-\alpha}) \times C((0, \infty); \contisp^{-\alpha})^3 \given
  \forall T, \alpha \in (0, \infty), \,\, \sup_{0 < t \leq T} \norm{\underline{Z}_t}_{\zset_{\alpha}} < \infty  }.
\end{equation*}
We set
\begin{multline*}
  \Psi(Y_t, \underline{Z}_t) \defby
  (1+\lambda)(Z_t + Y_t) \\ - \nu(\abs{Y_t}^2Y_t + 2 Z\abs{Y_t}^2 + \conj{Z_t}Y_t^2 + 2\abs{Z_t}^2 Y_t + Z_t^2 \conj{Y_t} + \abs{Z_t}^2 Z_t)
\end{multline*}
as long as the multiplications on the right hand side make sense.
\begin{definition}\label{def:definition_of_Y}
  For given $Y_0 \in \contisp^{-\alpha_0}$, $\underline{Z} \in \zset$ and $T \in (0, \infty)$,
  we say that $Y$ is a solution of
  \begin{equation}
    \label{eq:pde_of_Y}
    \left \{
    \begin{aligned}
      &\partial_t Y = A Y + \Psi(Y, \underline{Z}), \\
      &Y(0, \cdot) = Y_0
    \end{aligned} \right.
  \end{equation}
  over $[0, T]$ if the following conditions are satisfied:
  \begin{enumerate}[(i)]
    \item There exists $\alpha_1 \in (0, \infty)$ and $\gamma \in (0, \frac{1}{3})$ such that
    $Y \in C((0, T]; \contisp^{\alpha_1})$ and
  \begin{equation}\label{eq:assumption_on_singularity_at_zero}
    \sup_{0 < t \leq T} t^{\gamma} \norm{Y_t}_{\contisp^{\alpha_1}} < \infty.
  \end{equation}
    \item For $t \in (0, T]$ we have
  \begin{equation}\label{eq:def_of_solution_Y}
    Y_t = e^{tA} Y_0 + \int_0^t e^{(t-s)A} \Psi(Y_s, \underline{Z}_s) \, ds .
  \end{equation}
  \end{enumerate}
  We let $S^T(Y_0, \underline{Z})$ be the set of solutions of \eqref{eq:pde_of_Y}.
\end{definition}
\begin{remark}
  The technical assumption \eqref{eq:assumption_on_singularity_at_zero} is needed to ensure
  well-definedness of the integral in \eqref{eq:def_of_solution_Y} and uniqueness of solutions.
  It will be revealed that a solution satisfies \eqref{eq:assumption_on_singularity_at_zero} for
  any $\alpha_1$, $\gamma$ satisfying
  \begin{equation}\label{eq:condition_of_alpha_1_and_gamma}
    \frac{\alpha_0 + \alpha_1}{2} < \gamma, \qquad \frac{\alpha_1}{2} + 2 \gamma < 1.
  \end{equation}
\end{remark}
The main theorem of this section is the following;
\begin{theorem}\label{thm:global_wellposedness}
  For all $\alpha_0 \in (0, \frac{2}{3})$, $T > 0$, $Z \in \zset$ and $Y_0 \in \contisp^{-\alpha_0}$,
  there exists exactly one solution of \eqref{eq:pde_of_Y} over $[0, T]$.
\end{theorem}
The proof of Theorem \ref{thm:global_wellposedness} is given at the end of Subsection \ref{subsec:bootstrap}.
Until Proposition \ref{prop:continuity_parameters},
we work in a deterministic framework.
Afterwards, we derive some results of solutions of the SCGL (Theorem \ref{thm:convergence_of_smooth_cgl}
and Theorem \ref{thm:coming_down_from_infinity}), where we work in a
probabilistic framework.

\subsection{The local well-posedness}\label{subsec:local_wellposedness}
We first show local well-posedness of \eqref{eq:pde_of_Y}.
\begin{theorem}\label{thm:local_wellposedness}
  Let $\alpha_0 \in (0, \frac{2}{3})$ and $K \in [1, \infty)$.
  Then, there exist $\alpha = \alpha(\alpha_0)$ and $T^* = T^*(\alpha, K) \in (0, \infty)$ such that
  for every $Y_0 \in \contisp^{-\alpha_0}$ and $\underline{Z} \in \zset$
  with $\norm{Y_0}_{\contisp^{-\alpha_0}} + \sup_{0<t\leq T^*} \norm{\underline{Z}_t}_{\zset_{\alpha}} \leq K$,
  the set $S^{T^*}(Y_0, \underline{Z})$ is a singleton.
\end{theorem}
\begin{proof}
  \emph{STEP 1.} We first construct a solution of \eqref{eq:pde_of_Y}.
  We choose
  $\alpha, \alpha_1 \in (0, \infty)$ and  $\gamma \in (0, \frac{1}{3})$ satisfying
  \begin{equation}\label{eq:condition_of_alpha_and_gamma}
    \alpha < \alpha_1, \quad \frac{\alpha_0 + \alpha_1}{2} < \gamma, \quad \frac{\alpha+ \alpha_1}{2} + 2 \gamma < 1.
  \end{equation}
  For $T^* \leq  1$, we set $\vertiii{Y}_{T^*} \defby \sup_{0 < t \leq T^*} t^{\gamma} \norm{Y_t}_{\contisp^{\alpha_1}} $
  and
  \begin{equation*}
  B_{T^*} \defby \set{Y \in C((0, T^*]; \contisp^{\alpha_1}) \given \vertiii{Y}_{T^*} \leq 1}.
  \end{equation*}
  We define
  \begin{equation*}
    \mathcal{M}_{T^*} Y(t) \defby  e^{tA} Y_0 + \int_0^t e^{(t-s)A} \Psi(Y_s, \underline{Z}_s) \, ds,
    \quad 0 < t \leq T^*.
  \end{equation*}
  We show that for small $T^*$ the map $\mathcal{M}_{T^*}: B_{T^*} \to B_{T^*}$ is a contraction.
  Proposition \ref{prop:smoothing_in_besov} bounds the first term by
  \begin{equation*}
    \norm{e^{tA} Y_0}_{\contisp^{\alpha_1}} \lesssim t^{-\frac{\alpha_0 + \alpha_1}{2}} \norm{Y_0}_{\contisp^{-\alpha_0}}.
  \end{equation*}
  By \eqref{eq:condition_of_alpha_and_gamma} the norm $\vertiii{e^{\cdot A} Y_0}_{T^*}$ becomes small for small $T^*$. The map
  $t \mapsto e^{tA} Y_0 \in \contisp^{\alpha}$ is continuous on $(0, T^*]$.

  For the second term, we note that
  $\norm{\Psi(Y_s, \underline{Z}_s)}_{\contisp^{-\alpha}}$ $\lesssim s^{-3\gamma}$
  by Corollary \ref{cor:multiplicative_inequality}. Hence we have by Proposition \ref{prop:smoothing_in_besov}
  \begin{equation*}
    \norm{ \int_0^t e^{(t-s)A} \Psi(Y_s, \underline{Z}_s) \, ds }_{\contisp^{\alpha_1}}
    \lesssim \int_0^t (t-s)^{- \frac{\alpha + \alpha_1}{2}} s^{-3 \gamma} \, ds
    \lesssim t^{1 - \frac{\alpha + \alpha_1}{2} - 3\gamma}.
  \end{equation*}
  By \eqref{eq:condition_of_alpha_and_gamma},
  $\vertiii{\cdot}_{T^*}$-norm of the second term becomes small for small $T^*$.
  To prove the continuity of the map $t \mapsto \mathcal{M}_{T^*} Y (t)$,
  we take $\alpha'_1$ sufficiently close to but greater than $\alpha_1$. For $0 < t < t'$,
  \begin{align*}
    \MoveEqLeft
    \norm{ \int_0^t e^{(t-s)A} \Psi(Y_s, \underline{Z}_s) \, ds
    - \int_0^{t'} e^{(t'-s)A} \Psi(Y_s, \underline{Z}_s) \, ds }_{\contisp^{\alpha_1}} \\
    &\leq \int_0^t \norm{(e^{(t-s)A} - e^{(t'-s)A}) \Psi(Y_s, \underline{Z}_s)}_{\contisp^{\alpha_1}} \, ds
    + \int_t^{t'} \norm{e^{(t'-s)A} \Psi(Y_s, \underline{Z}_s) }_{\contisp^{\alpha_1}}\, ds  \\
    & \lesssim \int_0^t \abs{t' - t}^{\frac{\alpha'_1 - \alpha_1}{2}} \norm{e^{(t-s)A} \Psi(Y_s, \underline{Z}_s)}_{\contisp^{\alpha'_1}} \, ds
    +\int_t^{t'} (t' - s)^{- \frac{\alpha+\alpha_1}{2}} s^{-3\gamma} \, ds \\
    &\lesssim \abs{t' - t}^{\frac{\alpha'_1 - \alpha_1}{2}} t^{1- \frac{\alpha + \alpha_1'}{2} - 3 \gamma}
    + (t')^{1 - \frac{3\alpha}{2} - 3 \gamma} \int_{t/t'}^1 (1-s)^{- \frac{\alpha+\alpha_1}{2}} s^{-3\gamma} \, ds.
  \end{align*}
   In the second inequality, we used Proposition \ref{prop:smoothing_in_besov} and Proposition \ref{prop:time_regularity_of_the_heat_flow_in_besov}.
  Thus, the map $(0, T^*] \ni t \mapsto M_{T^*} Y(t) \in \contisp^{\alpha_1}$ is continuous.
  Finally we prove that the map $\mathcal{M}_{T^*}$ is a contraction for small $T^*$.
  Since
  \begin{equation*}
    \norm{\Psi(Y_s, \underline{Z}_s) - \Psi(Y_s', \underline{Z}_s)}_{\contisp^{-\alpha}}
    \lesssim \norm{Y_s - Y_s'}_{\contisp^{\alpha_1}} s^{-2 \gamma },
  \end{equation*}
  we have
  \begin{align*}
    \norm{\mathcal{M}_{T^*} Y(t) - \mathcal{M}_{T^*} Y'(t)}_{\contisp^{\alpha_1}}
    &\lesssim \int_0^t (t-s)^{-\frac{\alpha_1 + \alpha}{2}} s^{-2 \gamma} \norm{Y_s - Y_s'}_{\contisp^{\alpha_1}} ds \\
    &\lesssim t^{1 - \frac{\alpha + \alpha_1}{2} - 3 \gamma} \norm{Y_s - Y_s'}.
  \end{align*}
  Therefore, $\mathcal{M}_{T^*}$ is a contraction for small $T^*$.
  Then there is a unique fixed point of the map $\mathcal{M}_{T^*}$, which is a solution of \eqref{eq:pde_of_Y}.

  \emph{STEP 2.} We show uniqueness of solutions. We let $T^*$ be the time constructed in \emph{STEP 1} and let
  $Y, Y' \in S^{T^*}(Y_0, \underline{Z})$.
  Set
  \begin{equation*}
    T^{**} \defby \sup \set{t \in [0, T^*] \given \forall s \leq t, Y_s = Y'_s}.
  \end{equation*}
  Assume $T^{**} < T^*$. The argument in \emph{STEP 1} implies that,
  possibly by taking smaller $\alpha_1 \in (0, 1)$ and larger $\gamma \in (0, \frac{1}{3})$,
  there exists $\delta \in (0, T^* - T^{**})$
  such that $(Y_{T^{**} + t})_{t \leq \delta}$, $(Y'_{T^{**} + t})_{t \leq \delta} \in B_{\delta}$ and
  $\tilde{\mathcal{M}}: B_{\delta} \to B_{\delta}$ is a contraction, where
  \begin{equation*}
    \tilde{\mathcal{M}} y (t) \defby e^{tA} Y_{T^{**}} + \int_0^t e^{(t-s)A} \Psi(y_s, \underline{Z}_{T^{**} + s}) ds
    \qquad (y \in B_{\delta}).
  \end{equation*}
  Since both $(Y_{T^{**} + t})_{t \leq \delta}$ and $(Y'_{T^{**} + t})_{t \leq \delta}$
  are fixed points of $\tilde{\mathcal{M}}$,
  $Y_{T^{**} + t} = Y'_{T^{**} + t}$ for $t \leq \delta$, which contradicts the definition of $T^{**}$.
  Therefore $T^{**} = T^*$ and $Y = Y'$.
\end{proof}

\begin{remark}
  We have $S^T(Y_0, \underline{Z}) \subset C([0, T]; \contisp^{-\alpha_0}) \cap
  C((0, T]; \contisp^{\beta})$ for every $\beta \in (1, 2)$.
  Indeed, take $Y \in S^T(Y_0, \underline{Z})$ and $\alpha \in (0, 2 - \beta)$. As shown in \emph{STEP 1} above,
  we have $\norm{\Psi(Y_s, \underline{Z}_s)}_{\contisp^{-\alpha}}$ $\lesssim s^{-3\gamma}$
  and hence
  \begin{equation}
    \norm{\int_0^t e^{(t-s)A} \Psi(Y_s, \underline{Z}_s) \, ds}_{\contisp^{\beta}}
    \lesssim \int_0^t (t-s)^{ - \frac{\alpha + \beta}{2}} s^{- 3 \gamma}
    \, ds \lesssim t^{1  - \frac{\alpha + \beta}{2} - 3\gamma}.
  \end{equation}
  Therefore $Y_t$ is $\contisp^{\beta}$-valued.
  As the continuity can be proved as in \emph{STEP 1} above,
  we see that $Y \in C((0, T]; \contisp^{\beta})$. Similarly we can show $Y \in C([0, T]; \contisp^{-\alpha_0})$.
\end{remark}
\begin{remark}\label{remark:form_of_T_star}
  $T^*$ can be of the form $T^* = C(\alpha_0, \alpha_1, \alpha, \gamma) K^{-\theta}$ for some $\theta \in (0, \infty)$.
\end{remark}

\subsection{A priori $L^p$ estimates}\label{subsec:a_priori_L_p}
We now explain our strategy to prove the global well-posedness of \eqref{eq:pde_of_Y}.
Uniqueness of solutions can be proved as in \emph{STEP 2} in the proof of Theorem \ref{thm:local_wellposedness}.
Therefore, the problem is to construct a global solution of \eqref{eq:pde_of_Y}.
Our strategy is to apply the fixed point argument in the proof of Theorem \ref{thm:local_wellposedness} repeatedly.

By Theorem \ref{thm:local_wellposedness}, there exists $Y \in S^{T^*}(Y_0, \underline{Z})$ for small $T^* \in (0, \infty)$.
We move to find $Y_{T^* + \cdot}$ satisfying
\begin{equation*}
  Y_{T^* + t} = e^{tA} Y_{T^*} + \int_0^t e^{(t-s)A} \Psi(Y_{T^*+s}, \underline{Z}_{T^*+s}) ds.
\end{equation*}
Replacing $(Y_0, \underline{Z})$ by $(Y_{T^*}, \underline{Z}_{T^*+\cdot})$,
Theorem \ref{thm:local_wellposedness} allows us to find
\begin{equation*}
  Y_{T^* + \cdot} \in S^{T^{**}}(Y_{T^*}, \underline{Z}_{T^*+ \cdot})
\end{equation*}
for some $T^{**} \in (0,\infty)$. Thus, we extended a solution of \eqref{eq:pde_of_Y} over $[0, T^*]$ to a solution of \eqref{eq:pde_of_Y}
over $[0, T^* + T^{**}]$. We then continue this process.

However, in order to construct a solution over $[0, T]$ for given $T \in (0, \infty)$, we have to choose $T^*, T^{**}, \ldots$
uniformly away from $0$. According to Theorem \ref{thm:local_wellposedness},
for fixed $\underline{Z}$, $T^*$ depends on the $\norm{\cdot}_{\contisp^{-\alpha_0}}$-norm of the initial
condition. Therefore, if we know a priori that every solution $Y$ of \eqref{eq:pde_of_Y} over $[0, \tilde{T}]$ with
$\tilde{T} \leq T$ satisfies
\begin{equation}\label{eq:a_priori_estimate_in_C_minus_alpha_zero}
  \sup_{0 \leq t \leq \tilde{T}} \norm{Y_t}_{\contisp^{-\alpha_0}} \lesssim_{T, Y_0, \underline{Z}} 1,
\end{equation}
then we can take $T^*, T^{**}, \ldots$ uniformly away from $0$.

The aim of this subsection is to prove a priori $L^p$ estimate for $p \in [2, p(\mu))$, where
\begin{equation}\label{eq:def_of_p_mu}
  p(\mu) \defby 2(1 + \mu^2 + \mu \sqrt{1 + \mu^2}).
\end{equation}
The proof is in the spirit of \cite[Section 6]{MW2dim} and \cite[Section 3]{TW18}.
However, with their methods, we only obtain a prior $L^p$ estimate with $p < p(\mu)$ due to the presence of the
dispersion $i \Delta$.

We stress that this a priori $L^p$ estimate is insufficient to construct a global solution of \eqref{eq:pde_of_Y}
for small $\mu$.
Indeed, when $\mu > \frac{1}{2\sqrt{2}}$, we can take $\alpha_0 \in (0, \frac{2}{3})$ and $p \in [2, p(\mu))$ such that
the embedding $L^p \hookrightarrow \contisp^{-\alpha_0}$ holds.
Then we obtain the estimate \eqref{eq:a_priori_estimate_in_C_minus_alpha_zero}, which leads to construction of a global solution.
In contrast, when $\mu \leq \frac{1}{2\sqrt{2}}$, such embedding does not hold, and hence a priori $L^p$ estimate for
$p < p(\mu)$ is insufficient. This problem will be addressed in Subsection \ref{subsec:bootstrap}.

In this subsection, we fix $\alpha_0 \in (0, \frac{2}{3})$, $\beta \in (0, 2)$, $Y_0 \in \contisp^{-\alpha_0}$
and $\underline{Z} \in \zset_{\alpha}$.
\begin{proposition}\label{prop:Holder_continuity_of_Y}
  Let $Y \in S^T(Y_0, \underline{Z})$, $0 < t_0 \leq T$ and $\delta \in (0, \beta)$. Then
  the map $Y: [t_0, T] \to \contisp^{\delta}$ is
  $\frac{\beta - \delta}{2}$-H\"older continuous.
\end{proposition}
\begin{proof}
  Set $\Psi_s \defby \Psi(Y_s, \underline{Z}_s)$. As $Y$ is a solution of \eqref{eq:pde_of_Y}, we have
  \begin{equation*}
    Y_t = e^{(t- t_0) A} Y_{t_0} + \int_{t_0}^t e^{(t-s)A} \Psi_s \, ds.
  \end{equation*}
  For $t < t'$, the first term can be estimated by
  \begin{equation*}
    \norm{e^{(t'-t_0)A} Y_{t_0} - e^{(t-t_0) A} Y_{t_0}}_{\contisp^{\delta}}
    \lesssim (t' - t)^{\frac{\beta- \delta}{2}} \norm{e^{(t-t_0) A} Y_{t_0}}_{\contisp^{\beta}}
    \lesssim (t'- t)^{\frac{\beta- \delta}{2}} \norm{Y_{t_0}}_{\contisp^{\beta}},
  \end{equation*}
  where we used Proposition \ref{prop:time_regularity_of_the_heat_flow_in_besov}.
  For the second term, since $Y \in C([t_0, T], \contisp^{\beta})$, we have the estimate
  \begin{equation*}
    \norm{\Psi_s}_{\contisp^{-\alpha}} \lesssim_Y s^{-\alpha} \qquad s \in [t_0, T].
  \end{equation*}
  Then we have, by Proposition \ref{prop:time_regularity_of_the_heat_flow_in_besov} and
  Proposition \ref{prop:smoothing_in_besov},
  \begin{align*}
    \MoveEqLeft
    \norm{\int_{t_0}^{t'} e^{(t'-s)A} \Psi_s \, ds - \int_{t_0}^t e^{(t-s)A} \Psi_s \, ds}_{\contisp^{\delta}}  \\
    &\leq \int_{t_0}^t \norm{e^{(t'-s)A} \Psi_s - e^{(t-s)A} \Psi_s}_{\contisp^{\delta}} \, ds
    + \int_t^{t'} \norm{e^{(t'-s)A} \Psi_s}_{\contisp^{\delta}} \, ds \\
    &\lesssim \int_{t_0}^t (t' - t)^{\frac{\beta - \delta}{2}} \norm{e^{(t-s)A} \Psi_s}_{\contisp^{\beta}} \, ds
    + \int_t^{t'} \norm{e^{(t'-s)A} \Psi_s}_{\contisp^{\delta}} \, ds \\
    &\lesssim \int_{t_0}^t (t'-t)^{\frac{\beta - \delta}{2}} (t-s)^{- \frac{\alpha + \beta}{2}} s^{-\alpha} \, ds
    + \int_t^{t'} (t' -s)^{- \frac{\alpha + \delta}{2}} s^{-\alpha} \, ds \\
    &\lesssim (t'-t)^{\frac{\beta - \delta}{2}} t^{1-\frac{3 \alpha}{2} - \frac{\beta}{2}}
    + (t'-t)^{1 - \frac{3 \alpha}{2} - \frac{\delta}{2}}.
  \end{align*}
  By taking sufficiently small $\alpha = \alpha(\beta, \delta) > 0$, we conclude the proof.
\end{proof}

\begin{proposition}\label{prop:mild_solution}
  Let $Y \in S^T(Y_0, \underline{Z})$. Then for $\phi \in \contisp^1$ we have the identity
  \begin{equation*}
    \inp{Y_t}{\phi} - \inp{Y_{t_0}}{\phi} =
    \int_{t_0}^t \left\{ - (i + \mu) \inp{\nabla Y_s}{\nabla \phi} - \inp{Y_s}{\phi} +
    \inp{\Psi(Y_s, \underline{Z}_s)}{\phi} \right\} \, ds
  \end{equation*}
  for $0 < t_0 \leq t \leq T$.
\end{proposition}
\begin{proof}
  Set $\Psi_s \defby \Psi(Y_s, \underline{Z}_s)$.
  We begin by showing that for $\phi \in C^{\infty}(\torus)$ and $t \leq T$,
  \begin{equation}
    \label{eq:weak_sol_against_smooth}
    \inp{Y_t}{\phi} - \inp{Y_{t_0}}{\phi} = \int_{t_0}^t \left(\inp{Y_s}{A \phi} + \inp{\Psi_s}{\phi}\right)  ds.
  \end{equation}
  Since $Y$ is a mild solution of \eqref{eq:pde_of_Y}, we have
  \begin{equation*}
    \int_{t_0}^t \inp{Y_s}{A\phi} \, ds = \int_{t_0}^t \inp*{e^{(s-t_0) A} Y_{t_0} +
     \int_{t_0}^s e^{(s-u) A} \Psi_u \, du}{A \phi} \, ds.
  \end{equation*}
  Note that for $s > 0$,
  \begin{equation*}
    \inp{e^{(s-t_0)A} Y_{t_0}}{A\phi} = \inp{Ae^{(s-t_0)A} Y_{t_0}}{\phi} = \frac{d}{ds} \inp{e^{(s-t_0)A} Y_{t_0}}{\phi}.
  \end{equation*}
  Therefore
  \begin{equation*}
    \int_{t_0}^t \inp{e^{(s-t_0)A}Y_{t_0}}{A \phi} \, ds = \inp{e^{(t-t_0)A}Y_{t_0}}{\phi} - \inp{Y_{t_0}}{\phi}.
  \end{equation*}
  Similarly, we obtain
  \begin{equation*}
    \int_{t_0}^t \int_{t_0}^s A e^{(s-u) A} \Psi_u \, du \, ds
    = \int_{t_0}^t \int_u^t A e^{(s-u) A} \Psi_u \, ds \, du
    = \int_{t_0}^t (e^{(t-u) A} - 1) \Psi_u \, du.
  \end{equation*}
  Thus, we get
  \begin{equation*}
    \int_{t_0}^t \inp{Y_s}{A \phi} \, ds = \inp{e^{(t-t_0)A} Y_{t_0}}{\phi} - \inp{Y_{t_0}}{\phi}
    + \int_{t_0}^t \inp{e^{(t-s)A} \Psi_s}{\phi} \, ds - \int_{t_0}^t \inp{\Psi_s}{\phi} \, ds,
  \end{equation*}
  which proves \eqref{eq:weak_sol_against_smooth}.

  Now integration by parts implies
  \begin{equation*}
    \inp{Y_t}{\phi} - \inp{Y_{t_0}}{\phi} = \int_{t_0}^t \left\{-(i+\mu) \inp{\nabla Y_s}{\nabla \phi}
    - \inp{Y_s}{\phi} + \inp{\Psi_s}{\phi} \right\} \, ds,
  \end{equation*}
  and, as a result, the density argument finishes the proof.
\end{proof}
\begin{proposition}\label{prop:L^p_identity}
  Let $Y \in S^T(Y_0, \underline{Z})$ and $p \in [2, \infty)$. Then we have the identity
  \begin{multline*}
    \frac{1}{p} ( \norm{Y_t}_{L^p}^p - \norm{Y_{t_0}}_{L^p}^p) \\= \Re \Big[ \int_{t_0}^t \{
     -(i+\mu) \inp{\nabla Y_s}{\nabla(\conj{Y_s} \abs{Y_s}^{p-2})}
    - \norm{Y_s}_{L^p}^p + \inp{\Psi(Y_s, \underline{Z}_s)}{\conj{Y_s} \abs{Y_s}^{p-2}} \} \, ds \Big]
  \end{multline*}
  for $0 < t_0 \leq t \leq T$.
\end{proposition}
\begin{proof}
  Let $\underline{\mathbbm{t}} = (t_0 < t_1 < \cdots < t_n = t)$ be a partition of $[0, t]$. By Proposition \ref{prop:mild_solution},
  we have $\norm{Y_t}_{L^p}^p - \norm{Y_{t_0}}_{L^p}^p - \mathscr{S}(\underline{\mathbbm{t}}) =
  \mathscr{T}(\underline{\mathbbm{t}})$,
  where
  \begin{equation*}
    \mathscr{S}(\underline{\mathbbm{t}}) \defby \sum_{i=0}^{n-1} \inp{Y_{t_{i+1}}}{\conj{Y_{t_{i+1}}} \abs{Y_{t_{i+1}}}^{p-2} -
    \conj{Y_{t_i}} \abs{Y_{t_i}}^{p-2}},
  \end{equation*}
  and
  \begin{multline*}
    \mathscr{T}(\underline{\mathbbm{t}}) \defby \sum_{i=0}^{n-1} \int_{t_i}^{t_{i+1}} \{- (i + \mu) \inp{\nabla Y_s}{\nabla(
    \conj{Y_{t_i}} \abs{Y_{t_i}}^{p-2})} \\
    - \inp{Y_s}{\conj{Y}_{t_i} \abs{Y_{t_i}}^{p-2}} + \inp{\Psi_s}{\conj{Y_{t_i}} \abs{Y_{t_i}}^{p-2}} \} ds .
  \end{multline*}
  We analyze the limits of $\mathscr{S}(\underline{\mathbbm{t}})$ and $\mathscr{T}(\underline{\mathbbm{t}})$
  as the mesh size tends to zero.
  Lemma \ref{lem:holder_fractional_power} implies
  \begin{equation*}
    \sum_{i=0}^{n-1} \int_{t_i}^{t_{i+1}} \inp{\nabla Y_s}{\nabla(\conj{Y_{t_i}} \abs{Y_{t_i}}^{p-2})} \, ds \to
    \int_{t_0}^t \inp{\nabla Y_s}{\nabla (\conj{Y_s} \abs{Y_s}^{p-2})} \, ds .
  \end{equation*}
  and
  \begin{multline*}
    \sum_{i=0}^{n-1} \int_{t_i}^{t_{i+1}} (-\inp{Y_s}{\conj{Y}_{t_i}\abs{Y_{t_i}}^{p-2}}
    + \inp{\Psi_s}{\conj{Y_{t_i}} \abs{Y_{t_i}}^{p-2}})  ds \\
    \to \int_{t_0}^t (- \norm{Y_s}_{L^p}^p + \inp{\Psi_s}{\conj{Y_s} \abs{Y_s}^{p-2}})  ds .
  \end{multline*}

  It remains to prove $\mathscr{S}(\underline{\mathbbm{t}}) \to \frac{p-1}{p} (\norm{Y_t}_{L^p}^p - \norm{Y_{t_0}}_{L^p}^p)$.
  Discrete version of integration by parts implies
  \begin{equation*}
    \mathscr{S}(\underline{\mathbbm{t}}) = \norm{Y_t}_{L^p}^p - \norm{Y_{t_0}}_{L^p}^p
    - \sum_{i=0}^{n-1} \inp{\conj{Y_{t_i}} \abs{Y_{t_i}}^{p-2}}{Y_{t_{i+1}} - Y_{t_i}}.
  \end{equation*}
  When $\abs{x} \leq R$, we have
  \begin{equation*}
    \abs{x+h}^p = \abs{x}^p + p \Re(\conj{x} \abs{x}^{p-2} h) + O_R( \abs{h}^2 ) \quad \mbox{as } h \to 0.
  \end{equation*}
  Therefore,
  \begin{equation*}
    \abs{Y_t}^p - \abs{Y_{t_0}}^p =  \sum_{i=0}^{n-1} \left\{p \Re[ \conj{Y_{t_i}} \abs{Y_{t_i}}^{p-2} (Y_{t_{i+1}} - Y_{t_i})]
     + O(\norm{Y_{t_{i+1}} - Y_{t_i}}_{L^{\infty}}^2) \right\}.
  \end{equation*}
  Then, in view of Proposition \ref{prop:Holder_continuity_of_Y}, we obtain
  \begin{equation*}
    \Re \left[  \sum_{i=0}^{n-1} \inp{Y_{t_i} \abs{Y_{t_i}}^{p-2}}{Y_{t_{i+1}} - Y_{t_i}} \right]
    = \frac{1}{p} ( \norm{Y_t}_{L^p}^p - \norm{Y_{t_0}}_{L^p}^p ) + o(1). \qedhere
  \end{equation*}
\end{proof}
We recall the notation \eqref{eq:def_of_p_mu}.
\begin{proposition}\label{prop:a_priori_L^p_estimate}
  Assume $p \in [2,  p(\mu) )$.
  Then there exist $b \in [1, \infty)$ and $c \in (0, \infty)$ such that
  for each $\alpha \in (0, \frac{1}{2})$ we can find a constant $C \in (0, \infty)$ such that
  for $T \in (0, \infty)$, $0 < t_0 \leq t \leq T$ and $Y \in S^T(Y_0, \underline{Z})$,
  \begin{equation}
    \label{eq:L^p_a_priori_estimate}
    \begin{multlined}
    \norm{Y_t}_{L^p}^p - \norm{Y_{t_0}}_{L^p}^p
    + c \int_{t_0}^t \norm{Y_s}_{L^{p+2}}^{p+2}  ds + c \int_{t_0}^t \norm{\abs{\nabla Y_s}^2 \abs{Y_s}^{p-2}}_{L^1} ds  \\
    \leq C \int_{t_0}^t (1 + s^{-\alpha}\norm{\underline{Z}_s}_{\zset_{\alpha}})^{b}  ds.
    \end{multlined}
  \end{equation}
\end{proposition}
\begin{proof}
  First we note that
  \begin{equation*}
    \nabla ( \conj{Y_s} \abs{Y_s}^{p-2} )
    = \frac{p}{2} \abs{Y_s}^{p-2} \nabla \conj{Y_s} + \frac{p-2}{2} \abs{Y_s}^{p-4} \conj{Y_s}^{2} \nabla Y_s,
  \end{equation*}
  and thus,
  \begin{align*}
    \Re & \inp{\nabla Y_s}{\nabla ( \conj{Y_s} \abs{Y_s}^{p-2} )}\\
    &= -\frac{\mu p}{2} \inp{1}{\abs{\nabla Y_s}^2 \abs{Y_s}^{p-2}}
    + \Re \frac{p-2}{2} (i + \mu) \inp{\nabla{Y_s} \cdot \nabla{Y_s}}{\abs{Y_s}^{p-4} \conj{Y_s}^2} \\
    &\leq - \delta \inp{1}{\abs{\nabla Y_s}^2 \abs{Y_s}^{p-2}},
  \end{align*}
  where $\delta \defby \frac{\mu p}{2} - \frac{p-2}{2} \sqrt{\mu^2 + 1} > 0$.
  Therefore, by Proposition \ref{prop:L^p_identity},
  \begin{align}
    \label{eq:long_estimate_of_L^p_Y}
    \MoveEqLeft
    \frac{1}{p}( \norm{Y_t}_{L^p}^p - \norm{Y_{t_0}}_{L^p}^p) + \delta \int_{t_0}^t \inp{1}{\abs{\nabla Y_s}^2 \abs{Y_s}^{p-2}} \, ds
    + \Re (\nu) \int_{t_0}^t \inp{1}{\abs{Y_s}^{p+2}}  ds \nonumber \\
    \begin{split}
      \lesssim_{\nu}& \int_{t_0}^t \big\{ \inp{1}{\abs{Y_s}^p} + \abs{\inp{Z_s}{\conj{Y_s}\abs{Y_s}^{p-2}}}
      + \abs{\inp{Z_s}{\conj{Y_s}\abs{Y_s}^p}} + \abs{\inp{\conj{Z_s}}{Y_s^3 \abs{Y_s}^{p-2}}} \\
      &+ \inp{\abs{Z_s}^2}{\abs{Y_s}^p} + \abs{\inp{Z_s^2}{\conj{Y_s}^2 \abs{Y_s}^{p-2}}}
      + \abs{\inp{\abs{Z_s}^2\conj{Z_s}}{\conj{Y_s} \abs{Y_s}^{p-2}}} \big\} ds.
    \end{split}
  \end{align}
  Each term in the integrand is of the form $\abs{\inp{\xi_s}{\eta_s}}$ with
  $\xi \in \set{Z, \conj{Z}, Z^2, \abs{Z}^2, \abs{Z}^2Z}$, $\eta = Y_s^a \conj{Y_s}^b$ and $2 \leq a+b \leq p+1$.
  Set $A_s \defby \norm{\abs{Y_s}^{p-2} \abs{\nabla Y_s}^2}_{L^1}$ and $B_s \defby
  \norm{Y_s}_{L^{p+2}}^{p+2}$. We show that $\abs{\inp{\xi_s}{\eta_s}}$
  is controlled by $A_s$ and $B_s$.
  Proposition \ref{prop:duality_in_besov} implies
  $\abs{\inp{\xi_s}{\eta_s}}$ $\lesssim \norm{\xi_s}_{\contisp^{-\alpha}} \norm{\eta_s}_{\B_{1,1}^{\alpha}}$.
  $\norm{\xi_s}_{\contisp^{-\alpha}}$ is bounded by $s^{-\alpha} \norm{\underline{Z}_s}_{\zset_{\alpha}}$.

  Now we estimate $\norm{\eta_s}_{\besov_{1,1}^{1/2}}$.
  By Proposition \ref{prop:besov_vs_sobolev}, we get
  \begin{equation*}
    \norm{\eta_s}_{\B_{1,1}^{1/2}} \lesssim
    \norm{\eta_s}_{L^1}^{\frac{1}{2}} \norm{\nabla \eta_s}_{L^1}^{\frac{1}{2}} + \norm{\eta_s}_{L^1}.
  \end{equation*}
  By Cauchy-Schwarz inequality, we obtain
  \begin{equation*}
    \norm{\nabla \eta_s}_{L^1}^2 \lesssim_{a, b} \norm{ \abs{Y_s}^{a+b-1} \abs{\nabla Y_s}}_{L^1}^2
    \leq A_s B_s^{\frac{2(a+b) - p}{p+2}}.
  \end{equation*}
  Therefore,
  \begin{equation*}
    \norm{\eta_s}_{\B_{1,1}^{1/2}} \lesssim A_s^{\frac{1}{4}} B_s^{\frac{1}{2} \frac{a+b}{p+2} +
    \frac{1}{4} \frac{2(a+b) - p}{p+2}} + B_s^{\frac{a+b}{p+2}}.
  \end{equation*}
  Since
  \begin{equation*}
    a_1 \defby \frac{1}{4} + \frac{1}{2} \frac{a+b}{p+2} + \frac{1}{4} \frac{2(a+b) - p}{p+2} < 1,
  \end{equation*}
  we have $\norm{\eta_s}_{\B_{1,1}^{1/2}}
  \lesssim A_s^{a_1} + B_s^{a_1} + B_s^{a_2}$ with $a_2 \defby \frac{a+b}{p+2}$.

  For $\epsilon > 0$, $\gamma \in (0, 1)$ and $x \geq 0$, Young's inequality for products implies
  \begin{equation*}
    c x^{\gamma} \leq (1-\gamma)(c \epsilon^{-\gamma})^{\frac{1}{1-\gamma}} + \gamma \epsilon x.
  \end{equation*}
  Using this, we can find a constant $C = C(\epsilon) \in (0, \infty)$ such that
  \begin{equation*}
    \abs{\inp{\xi_s}{\eta_s}} \leq \epsilon (A_s + B_s)
    + C \left( s^{-\frac{\alpha}{1-a_1}} \norm{\underline{Z}_s}_{\zset_{\alpha}}^\frac{1}{1- a_1} +
    s^{-\frac{\alpha}{1-a_2}} \norm{ \underline{Z}_s}_{\zset_{\alpha}}^\frac{1}{1- a_2} \right ).
  \end{equation*}
  Note that $a_1$ and $a_2$ are uniformly away from $1$ when $p \in [2, p(\mu))$.
  Therefore, the above argument implies that the right hand side of  \eqref{eq:long_estimate_of_L^p_Y} is bounded by
  \begin{equation*}
    \epsilon \int_{t_0}^t (A_s + B_s) \, ds + C \int_{t_0}^t (1+ s^{-\alpha} \norm{\underline{Z}_s}_{\zset_{\alpha}})^b ds
  \end{equation*}
  for some $b \in [1, \infty)$, independent of $p$ and $\alpha$.
  Now the claim of the proposition easily follows.
\end{proof}

\begin{corollary}\label{cor:strong_L^p_estimate_independent_of_initial}
  Let $p \in [2, p(\mu))$ and $\alpha \in (0, 1)$ sufficiently small.
  Then there exist positive constants $b = b(p)$ and $C = C(\alpha, p)$ such that
  for $0 < t \leq T$ and $Y \in S^T (Y_0, \underline{Z})$,
  \begin{equation*}
    \norm{Y_t}_{L^p}^p \leq C \max \{t^{-\frac{p}{2}},
    (1 + t^{-\alpha} \sup_{0 < s \leq T} \norm{\underline{Z}_s}_{\zset_{\alpha}})^b \}.
  \end{equation*}
\end{corollary}
\begin{proof}
  By proposition \ref{prop:a_priori_L^p_estimate} we have
  \begin{equation*}
    \partial_t \norm{Y_t}_{L^p}^p + c_1 \norm{Y_t}_{L^{p+2}}^{p+2}
    \leq c_2 (1 + t^{-\alpha} \norm{\underline{Z}_t}_{\zset_{\alpha}})^b,
  \end{equation*}
  and hence
  \begin{equation*}
    \partial_s \norm{Y_s}_{L^p}^p + c_1 (\norm{Y_s}_{L^p}^p)^{1 + \frac{2}{p}}
    \lesssim c_2 (1 + t^{-\alpha} \sup_{0< r \leq T} \norm{\underline{Z}_r}_{\zset_{\alpha}})^b,
    \quad s \in [2^{-1}t, t].
  \end{equation*}
  Then Lemma \ref{lem:ode_comparison} below shows that
  \begin{equation*}
    \norm{Y_t}_{L^p}^p \lesssim \max \{t^{-\frac{p}{2}},  (1 + t^{-\alpha} \sup_{0 < s \leq T}
    \norm{\underline{Z}_s}_{\zset_{\alpha}})^{\frac{pb}{p+2}} \}. \qedhere
  \end{equation*}
\end{proof}
\begin{lemma}\label{lem:ode_comparison}
  Suppose that a differentiable function $f:[0, T] \to [0, \infty)$ satisfies
  \begin{equation*}
    \frac{df}{dt} + c_1 f^{1+\lambda} \leq c_2
  \end{equation*}
  with $\lambda$, $c_1, c_2 \in (0, \infty)$. Then we have
  \begin{equation*}
    f(t) \lesssim_{\lambda} \max \left\{(c_1 t)^{-\frac{1}{\lambda}},  ( c_1^{-1} c_2 )^{\frac{1}{1+\lambda}} \right\}.
  \end{equation*}
\end{lemma}
\begin{proof}
  The proof is the same as \cite[Lemma 3.8]{TW18}.
  Let $t \in (0, T]$. First assume there exists $s \in [0, t]$ such that $\frac{c_1}{2} f(s)^{1+\lambda} \leq c_2$.
  Since $f(r) \geq (\frac{2c_2}{c_1})^{\frac{1}{1+\lambda}}$ implies $\frac{df}{dr}(r) \leq 0$, we see that
  $f \leq (\frac{2c_2}{c_1})^{\frac{1}{1+\lambda}}$ on $[s, T]$.

  Now assume that $f \geq (\frac{2c_2}{c_1})^{\frac{1}{1+\lambda}}$ on $[0, t]$. Then we have
  \begin{equation*}
    \frac{df}{ds} + \frac{c_1}{2} f^{1+\lambda} \leq 0 \qquad \mbox{on } [0, t],
  \end{equation*}
  and hence
  \begin{equation*}
    \frac{1}{-\lambda}(f(t)^{-\lambda} - f(0)^{-\lambda}) \leq - \frac{c_1 t}{2}.
  \end{equation*}
  This yields $f(t) \leq (\frac{2}{c_1\lambda t})^{\frac{1}{\lambda}}$.
\end{proof}

\subsection{A priori estimate by bootstrap arguments}\label{subsec:bootstrap}
As explained, a priori $L^p$ estimate obtained in the previous subsection is insufficient to construct a global solution.
The aim of this subsection is to upgrade a priori estimate by taking advantage of the smoothing effect of the semigroup $e^{tA}$.

We set
\begin{equation}\label{eq:z_norm_alpha_T}
  \znorm{\underline{Z}}_{\alpha, T} \defby \sup_{0 < t \leq T} \norm{\underline{Z}(t)}_{\zset_{\alpha}}.
\end{equation}
\begin{lemma}\label{lem:a_priori_estimate_1}
  There exist $p \in (2, \infty)$, $\gamma \in (0, 1)$ and $\kappa \in (0, \infty)$
  such that for $0<t_0 < t_1 \leq T$,
  $Y \in S^{t_1}(Y_0, \underline{Z})$ and sufficiently small $\alpha \in (0, 1)$,
  \begin{equation*}
    \int_{t_0}^{t_1} \norm{Y_t}_{\B_{3p}^{\gamma}}^3 \, dt
    \lesssim_{p, \alpha, \gamma, T}
    (t_0^{-1} + \znorm{\underline{Z}}_{\alpha, T})^{\kappa}.
  \end{equation*}
\end{lemma}
\begin{proof}
  According to Proposition \ref{prop:a_priori_L^p_estimate}, we can take $p_0 \in (2, 3)$ so that
  the inequality \eqref{eq:L^p_a_priori_estimate} holds with $p = p_0$. Set $p_1 \defby (p_0 + 2)/3$ and take $q$ such that
  \begin{equation*}
    1 + \frac{1}{3} = \frac{1}{p_1} + \frac{1}{q}.
  \end{equation*}
  Since
  \begin{equation*}
    \frac{1}{p_1} - \frac{1}{q} = \frac{2}{p_1} - \frac{4}{3} < \frac{2}{4/3} - \frac{4}{3} = \frac{1}{6},
  \end{equation*}
  we can take $p \in (2, 3)$ and $\gamma \in (0, 1)$ satisfying
  \begin{equation*}
    q \left[ \left( \frac{1}{p_1} - \frac{1}{3p} \right) + \frac{\gamma}{2} \right] < 1.
  \end{equation*}
  We additionally suppose $\gamma$ is so small that we can find $\alpha \in (0, 1)$ such that
  \begin{equation*}
    \frac{2 + \alpha + \gamma}{2} - \frac{1}{3p} < 1, \quad \frac{3(\alpha + \gamma)}{2} < 1.
  \end{equation*}

  A solution $Y \in S^{t_1}(Y_0, \underline{Z})$ satisfies
  \begin{equation}\label{eq:Y_is_a_mild_sol_from_t_0}
    \begin{multlined}
    Y_t = e^{(t-t_0)A} Y_{t_0} +  \int_{t_0}^t e^{(t-s)A} \big[-\nu( \abs{Y_s}^2 Y_s \\ + 2Z_s \abs{Y_s}^2 + \conj{Z}_s Y_s^2
    +2\abs{Z}_s^2 Y_s + Z^2_s \conj{Y}_s + \abs{Z}_s^2 Z_s ) + (1+\lambda) (Y_s + Z_s) \big] ds.
    \end{multlined}
  \end{equation}
  We evaluate the $L^3([t_0, {t_1}]; \B^{\gamma}_{3p})$-norm of each term of the right hand side.
  We begin to estimate the integral in \eqref{eq:Y_is_a_mild_sol_from_t_0}. The key is to use Young's convolution inequality.
  For the cubic term, we evaluate
  \begin{align*}
    \norm{e^{(t-s)A} (\abs{Y_s}^2 Y_s)}_{\B^{\gamma}_{3p}}
    &\lesssim \norm{e^{(t-s)A} (\abs{Y_s}^2 Y_s)}_{\B^{\gamma + 2(\frac{1}{p_1} - \frac{1}{3p})}_{p_1}}\\
    &\lesssim (t-s)^{-(\frac{1}{p_1} - \frac{1}{3p}) - \frac{\gamma}{2}} \norm{\abs{Y_s}^{3p_1}}_{L^1}^{\frac{1}{p_1}}.
  \end{align*}
  We applied Proposition \ref{prop:besov_embedding} in the first inequality
  and applied Propositions \ref{lem:embedding_between_besov_spaces} and \ref{prop:smoothing_in_besov} in the second inequality.
  Therefore, by Young's convolution inequality,
  \begin{align*}
    \MoveEqLeft
    \int_{t_0}^{{t_1}} \left( \int_{t_0}^t \norm{e^{(t-s)A} (\abs{Y_s}^2 Y_s)}_{\B^{\gamma}_{3p}} ds \right)^3 dt \hspace{15em}\\
    &\lesssim \int_{t_0}^{t_1} \left( \int_{t_0}^{t_1} \indic_{\{0\leq t-s \leq t_1-t_0\}}
     (t-s)^{-(\frac{1}{p_1} - \frac{1}{3p}) - \frac{\gamma}{2}}
    \norm{Y_s^{3p_1}}_{L^1}^{\frac{1}{p_1}} ds \right)^3 dt \\
    &\leq \left( \int_{0}^{t_1-t_0} t^{-q\left[ \frac{1}{p_1} - \frac{1}{3p} + \frac{\gamma}{2} \right]} dt \right)^{\frac{3}{q}}
    \left( \int_{t_0}^{t_1} \norm{Y_t^{3p_1}}_{L^1} dt \right)^{\frac{3}{p_1}} \\
    &\lesssim (t_1 - t_0)^{\frac{3}{q} - 3(\frac{1}{p_1} - \frac{1}{3p} + \frac{\gamma}{2})}
    \left( 1 + \norm{Y_{t_0}}_{L^{p_0}}^{p_0} + \znorm{\underline{Z}}_{\alpha, T}^b \right)^{\frac{3}{p_1}}
  \end{align*}
  if $\alpha < b^{-1}$. In the last inequality, we applied the inequality \eqref{eq:L^p_a_priori_estimate}.

  We move to the next term.
  \begin{align*}
    \norm{e^{(t-s)A} (Z_s \abs{Y_s}^2)}_{\B^{\gamma}_{3p}}
    &\lesssim \norm{e^{(t-s)A} (Z_s \abs{Y_s}^2)}_{\B_1^{\gamma + 2(1 - \frac{1}{3p})}} \\
    &\lesssim (t-s)^{-\frac{2+\alpha+\gamma}{2} + \frac{1}{3p}} \norm{Z_s \abs{Y_s}^2}_{\B^{-\alpha}_1} \\
    &\lesssim (t-s)^{-\frac{2+\alpha+\gamma}{2} + \frac{1}{3p}} \norm{Z_s}_{\contisp^{-\alpha}} \norm{\abs{Y_s}^2}_{\B^{2/3}_{1,1}}.
  \end{align*}
  We applied Proposition \ref{prop:besov_embedding} to the first inequality, applied Proposition \ref{prop:smoothing_in_besov}
  to the second inequality and applied Corollary \ref{cor:multiplicative_inequality}-(ii) and
  Lemma \ref{lem:embedding_between_besov_spaces}-(iv) to the third inequality.
  By Proposition \ref{prop:besov_vs_sobolev} and \eqref{eq:L^p_a_priori_estimate}, for $\epsilon \in (0, 1)$,
  \begin{equation}\label{eq:estimate_of_Y^2_by_sobolev}
    \norm{\abs{Y_s}^2}_{\B^{\epsilon}_{1, 1}} \lesssim \norm{Y_s^2}_{L^1}^{1 - \epsilon} \norm{Y_s \nabla \conj{Y_s}}_{L^1}^{\epsilon}
    + \norm{Y_s^2}_{L^1}
    \lesssim B^{1 - \frac{\epsilon}{2}} \norm{\nabla Y_s}_{L^2}^{\epsilon} + B,
  \end{equation}
  where
  \begin{equation*}
    B \defby 1 + \norm{Y_{t_0}^2}_{L^1} + t_0^{-\alpha b} \znorm{\underline{Z}}_{\alpha, T}^b.
  \end{equation*}
  Setting $\epsilon = \frac{2}{3}$ in \eqref{eq:estimate_of_Y^2_by_sobolev} and using Young's convolution inequality, we obtain
  \begin{align*}
    \MoveEqLeft
    \int_{t_0}^{t_1} \left( \int_{t_0}^t \norm{e^{(t-s)A} (Z_s \abs{Y_s}^2)}_{\B^{\gamma}_{3p}} ds \right)^3 dt \\
    &\lesssim \left( \int_{0}^{t_1 - t_0} t^{- \frac{2 + \alpha + \gamma}{2} + \frac{1}{3p}} dt \right)^3
    \int_{t_0}^{t_1} (B^{5} \norm{\nabla Y_t}_{L^2}^2 + B^6) dt \\
    &\lesssim (t_1 - t_0)^{3(\frac{1}{3p} - \frac{\alpha + \gamma}{2})} B^6.
  \end{align*}
  The term $\conj{Z}_s Y_s^2$ can be handled similarly.

  For the term $\abs{Z_s}^2 Y_s$, we can similarly estimate
  \begin{align*}
    \norm{e^{(t-s)A} (\abs{Z_s}^2 Y_s)}_{\B^{\gamma}_{3p}}
    &\lesssim (t-s)^{-\frac{2+\alpha+\gamma}{2} + \frac{1}{3p}} \norm{\abs{Z_s}^2}_{\contisp^{-\alpha}} \norm{Y_s}_{\B^{1/3}_{1,1}} \\
    &\lesssim (t-s)^{-\frac{2+\alpha+\gamma}{2} + \frac{1}{3p}} \norm{\abs{Z_s}^2}_{\contisp^{-\alpha}}
    (B^{\frac{1}{3}} \norm{\nabla Y_s}_{L^2}^{\frac{1}{3}} + B^{\frac{1}{2}}).
  \end{align*}
  By Young's convolution inequality and Cauchy-Schwarz inequality,
  \begin{align*}
    \MoveEqLeft
    \int_{t_0}^{t_1} \left( \int_{t_0}^t \norm{e^{(t-s)A} (\abs{Z_s}^2 Y_s)}_{\B^{\gamma}_{3p}} ds \right)^3 dt \\
    &\lesssim \left( \int_{0}^{t_1 - t_0} t^{-\frac{2+\alpha+\gamma}{2} + \frac{1}{3p}} dt \right)^3
    \int_{t_0}^{t_1} \norm{\abs{Z_t}^2}^3_{\contisp^{-\alpha}} (B \norm{\nabla Y_t}_{L^2} + B^{\frac{3}{2}} )dt \\
    &\lesssim (t_1 - t_0)^{\frac{1}{p} - \frac{3(\alpha + \gamma)}{2}}
    \left(\int_{t_0}^{t_1} \norm{\abs{Z_s}^2}_{\contisp^{-\alpha}}^6 ds \right)^{\frac{1}{2}}
    \left[ B \left( \int_{t_0}^{t_1} \norm{Y_t}_{L^2}^2 dt \right)^{\frac{1}{2}} + B^{\frac{3}{2}} \right] \\
    &\lesssim (t_1 - t_0)^{\frac{1}{p} - \frac{3(\alpha+\gamma)}{2} + \frac{1 - 6 \alpha}{2}} B^{\frac{9}{2}}
  \end{align*}
  if $\alpha < \frac{1}{6}$.
  Estimates of the remaining terms in the integrand in \eqref{eq:Y_is_a_mild_sol_from_t_0} are similar.

  Now we estimate the term $e^{(t-t_0)A} Y_{t_0}$. Note that we have
  \begin{equation*}
    e^{(t-t_0)A}Y_{t_0} = e^{(t- \frac{t_0}{2}) A} Y_{\frac{t_0}{2}}
    + \int_{\frac{t_0}{2}}^{t_0} e^{(t-s)A} \Psi(Y_s, \underline{Z}_s) ds.
  \end{equation*}
  The above argument shows that
  \begin{equation*}
    \int_{t_0}^{t_1} \left( \int_{\frac{t_0}{2}}^{t_0} \norm{e^{(t - s)A}
    \Psi(Y_s, \underline{Z}_s)}_{\B^{\gamma}_{3p}} ds \right)^3 dt
    \lesssim ( 1 + t_0^{-1} + \norm{Y_{\frac{t_0}{2}}}_{L^{p_0}} + \znorm{\underline{Z}}_{\alpha, T} )^{\kappa}.
  \end{equation*}
  By Proposition \ref{prop:besov_embedding} and Proposition \ref{prop:smoothing_in_besov},
  \begin{equation*}
  \norm{e^{(t-\frac{t_0}{2})A} Y_{\frac{t_0}{2}}}_{\B^{\gamma}_{3p}}
  \lesssim \norm{e^{(t-\frac{t_0}{2})A} Y_{\frac{t_0}{2}}}_{\B^{\gamma + 2(\frac{1}{p_0} - \frac{1}{3p})}_{p_0}}
  \lesssim (t - \frac{t_0}{2})^{-\frac{\gamma}{2} - \frac{1}{p_0} + \frac{1}{3p}} \norm{Y_{\frac{t_0}{2}}}_{L^{p_0}},
  \end{equation*}
  and hence,
  \begin{equation*}
    \int_{t_0}^{t_1} \norm{e^{(t-t_0)A} Y_{t_0}}^3_{\B^{\gamma}_{3p}} ds
    \lesssim (t_1 - t_0) t_0^{-3(\frac{\gamma}{2} + \frac{1}{p_0} - \frac{1}{3p})} \norm{Y_{\frac{t_0}{2}}}^3_{L^{p_0}}.
  \end{equation*}

  Finally, recall that by Corollary \ref{cor:strong_L^p_estimate_independent_of_initial}
  \begin{equation*}
  \norm{Y_{t_0}}_{L^{p_0}}^{p_0} \lesssim  t_0^{-\frac{p_0}{2}} + t_0^{-\alpha b}
  \znorm{\underline{Z}}_{\alpha, T}^b.
  \qedhere
  \end{equation*}
\end{proof}

\begin{lemma}\label{lem:a_priori_estimate_2}
  There exist $p \in (2, \infty)$, $\gamma \in (0, 1)$ and $\kappa \in (0, \infty)$
  such that for  $0<t_0 < t_1 \leq T$,
  $Y \in S^{t_1}(Y_0, \underline{Z})$ and sufficiently small $\alpha \in (0, 1)$,
  \begin{equation*}
    \sup_{t_0 \leq t \leq t_1} \norm{Y_t}_{\B_{p}^{\gamma}}
    \lesssim_{p, \gamma, \alpha, T}
    (t_0^{-1} +
    \znorm{\underline{Z}}_{\alpha, T})^{\kappa}.
  \end{equation*}
\end{lemma}
\begin{proof}
  We evaluate each term on the right hand side of \eqref{eq:Y_is_a_mild_sol_from_t_0}.
  Take $p, \gamma, \kappa$ in Lemma \ref{lem:a_priori_estimate_1}.
  We have, by Proposition \ref{prop:smoothing_in_besov} and Corollary \ref{cor:multiplicative_inequality}-(i),
  \begin{equation*}
    \norm{e^{(t-s)A} (\abs{Y_s}^2 Y_s)}_{\B^{\gamma}_p} \lesssim \norm{\abs{Y_s}^2 Y_s}_{\B^{\gamma}_p}
    \lesssim \norm{Y_s}^3_{\B_{3p}^{\gamma}},
  \end{equation*}
  and thus by Lemma \ref{lem:a_priori_estimate_1}
  \begin{equation*}
    \sup_{t_0 \leq t \leq t_1} \int_{t_0}^{t} \norm{e^{(t-s)A} (\abs{Y_s}^2 Y_s)}_{\B^{\gamma}_{p}} ds
    \lesssim ( t_0^{-1} +  \znorm{\underline{Z}}_{\alpha, T} )^{\kappa}.
  \end{equation*}
  For $\epsilon \in (\alpha, 1)$, referring to \eqref{eq:estimate_of_Y^2_by_sobolev}, we observe
  \begin{equation*}
    \norm{e^{(t-s)A} Z_s \abs{Y_s}^2}_{\B^{\gamma}_p} \lesssim
    (t-s)^{-\frac{2+\alpha+\gamma}{2} + \frac{1}{p}} \norm{Z_s}_{\contisp^{-\alpha}}
    \left( \norm{Y_s^2}_{L^1}^{1-\frac{\epsilon}{2}} \norm{\nabla{Y_s}}_{L^2}^{\epsilon} + \norm{Y_s^2}_{L^1} \right).
  \end{equation*}
  For sufficiently small $\epsilon$ (hence $\alpha$ have to be small), H\"older's inequality, \eqref{eq:L^p_a_priori_estimate}
  and Corollary \ref{cor:strong_L^p_estimate_independent_of_initial} imply
  \begin{equation*}
    \sup_{t_0 \leq t \leq t_1} \int_{t_0}^{t} (t-s)^{-\frac{2+\alpha+\gamma}{2} + \frac{1}{p}} \norm{\nabla Y_s}_{L^2}^{\epsilon} ds
    \lesssim (t_0^{-1} +  \znorm{\underline{Z}}_{\alpha, T})^{\kappa_1}
  \end{equation*}
  for some $\kappa_1 \in [1, \infty)$.
  The other terms in the integrand can be similarly evaluated.
  The term $e^{(t-t_0)A} Y_{t_0}$ can be handled as in the proof of Lemma \ref{lem:a_priori_estimate_1}.
\end{proof}

\begin{lemma}\label{lem:a_priori_estimate_3}
  Let $0 < t_0 < t_1 \leq T$. Assume that there exist $C \in [1, \infty)$, $p \in (2, \infty)$ and $\gamma \in (0, 2)$ such that
  for every $Y \in S^{t_1}(Y_0, \underline{Z})$,
  \begin{equation*}
     \int_{\frac{t_0}{2}}^{t_1} \norm{Y_t}_{\B^{\gamma}_{3p}}^3 dt + \sup_{\frac{t_0}{2} \leq t \leq t_1} \norm{Y_t}_{\B^{\gamma}_p} \leq C.
  \end{equation*}
  In addition, assume that $p, \alpha, \gamma$ and $\epsilon \in (0, \frac{5}{3} - \frac{10}{3p})$ satisfy
  \begin{equation}\label{eq:assumption_on_p_alpha_gamma_epsilon}
    \frac{\alpha + \gamma + \epsilon}{2} <   \frac{5}{6} - \frac{2}{3p} .
  \end{equation}
  Then there exists $\kappa \in (0, \infty)$ such that for $Y \in S^{t_1}(Y_0, \underline{Z})$,
  \begin{equation*}
    \int_{t_0}^{t_1} \norm{Y_t}^3_{\B^{\gamma + \epsilon}_{3p}} dt + \sup_{t_0 \leq t \leq t_1} \norm{Y_t}_{\B_p^{\gamma + \epsilon}}
    \lesssim_{T, p, \alpha, \gamma, \epsilon} (C + t_0^{-1} +
     \znorm{\underline{Z}}_{\alpha, T})^{\kappa}.
  \end{equation*}
\end{lemma}
\begin{proof}
  Again we evaluate each term of the right hand side of \eqref{eq:Y_is_a_mild_sol_from_t_0}.
  We begin with the estimate of the integral. By Proposition \ref{prop:besov_embedding} and Proposition \ref{prop:smoothing_in_besov},
  \begin{align*}
    \norm{e^{(t-s)A} (\abs{Y_s}^2 Y_s)}_{\B^{\gamma + \epsilon}_{3p}}
    &\lesssim \norm{e^{(t-s)A} (\abs{Y_s}^2 Y_s)}_{\B^{\gamma+ \epsilon + \frac{10}{3p}}_{p/2}} \\
    &\lesssim (t-s)^{-\frac{\epsilon}{2} - \frac{5}{3p}} \norm{\abs{Y_s}^2 Y_s}_{\B^{\gamma}_{p/2}} \\
    &\lesssim (t-s)^{-\frac{\epsilon}{2} - \frac{5}{3p}} \norm{Y_s}_{\B^{\gamma}_p} \norm{Y_s^2}_{\B^{\gamma}_p}.
  \end{align*}
  Corollary \ref{cor:multiplicative_inequality} and Proposition \ref{prop:interpolation} imply
  \begin{equation}\label{eq:estimate_of_Y_s^2}
    \norm{Y_s^2}_{\B^{\gamma}_p} \lesssim \norm{Y_s}_{\B^{\gamma}_{2p}}^2
    \leq \norm{Y_s}_{\B^{\gamma}_{3p}}^{\frac{3}{2}} \norm{Y_s}_{\B^{\gamma}_{p}}^{\frac{1}{2}}
    \leq C^{\frac{1}{2}} \norm{\abs{Y_s}}_{\B^{\gamma}_{3p}}^{\frac{3}{2}}.
  \end{equation}
  Therefore,
  \begin{equation*}
    \norm{e^{(t-s)A} (\abs{Y_s}^2 Y_s)}_{\B^{\gamma + \epsilon}_{3p}}
    \lesssim (t-s)^{-\frac{\epsilon}{2} - \frac{5}{3p}} C^{\frac{3}{2}} \norm{Y_s}_{\B^{\gamma}_{3p}}^{\frac{3}{2}}.
  \end{equation*}
  Noting $1 + \frac{1}{3} = \frac{5}{6} + \frac{1}{2}$, Young's convolution inequality yields
  \begin{align*}
    \int_{t_0}^{t_1} &\left( \int_{t_0}^t \norm{e^{(t-s)A} (\abs{Y_s}^2 Y_s)}_{\B^{\gamma+\epsilon}_{3p}} ds \right)^3 dt \\
    &\lesssim C^\frac{9}{2} \left( \int_{t_0}^{t_1} (t_1 - s)^{-\frac{6}{5}( \frac{\epsilon}{2} + \frac{5}{3p})} ds \right)^{\frac{5}{2}}
    \left(\int_{t_0}^{t_1} \norm{Y_s}^3_{\B^{\gamma}_{3p}} ds \right)^{\frac{3}{2}}
    \lesssim C^6.
  \end{align*}
  Here we used the fact that $\epsilon < \frac{5}{3} - \frac{10}{3p}$.
  For the term $Z_s \abs{Y_s}^2$,
  \begin{align*}
    \norm{e^{(t-s)A} (Z_s \abs{Y_s}^2)}_{\B^{\gamma+\epsilon}_{3p}}
    &\lesssim \norm{e^{(t-s)A} (Z_s \abs{Y_s}^2)}_{\B^{\gamma + \epsilon + \frac{4}{3p}}_{p}}\\
    &\lesssim (t-s)^{-\frac{\alpha+\gamma+\epsilon}{2} - \frac{2}{3p}} \norm{Z_s}_{\contisp^{-\alpha}} \norm{\abs{Y_s}^2}_{\B^{\gamma}_p}.
  \end{align*}
  Young's convolution inequality and \eqref{eq:estimate_of_Y_s^2}  yield
  \begin{align*}
    \MoveEqLeft
    \int_{t_0}^{t_1}\left( \int_{t_0}^t \norm{e^{(t-s)A} (Z_s \abs{Y_s}^2)}_{\B^{\gamma + \epsilon}_{3p}} ds \right)^3 dt \\
    &\lesssim \left( \int_{t_0}^{t_1} (t_1 - s)^{-\frac{6}{5}(\frac{\alpha+\gamma + \epsilon}{2} + \frac{2}{3p})} ds \right)^{\frac{5}{2}}
    \left( \int_{t_0}^{t_1} C \norm{\underline{Z}_s}_{\zset_{\alpha}}^2 \norm{Y_s}_{\B^{\gamma}_{3p}}^3 ds \right)^{\frac{3}{2}}\\
    &\lesssim C^3  \znorm{\underline{Z}}_{\alpha, T}^3.
  \end{align*}
  Here we used the assumption \eqref{eq:assumption_on_p_alpha_gamma_epsilon}.
  We continue to estimate
  \begin{align*}
    \norm{e^{(t-s)A} (\abs{Z_s}^2 Y_s)}_{\B^{\gamma + \epsilon}_{3p}}
    &\lesssim \norm{e^{(t-s)A} (\abs{Z_s}^2 Y_s)}_{\B_p^{\gamma + \epsilon + \frac{4}{3p}}} \\
    &\lesssim (t-s)^{-\frac{\alpha + \gamma + \epsilon}{2} - \frac{2}{3p}} \norm{\abs{Z_s}^2}_{\contisp^{-\alpha}} \norm{Y_s}_{\B^{\gamma}_p} \\
    &\lesssim C (t-s)^{-\frac{\alpha+\gamma+\epsilon}{2} - \frac{2}{3p}} s^{-\alpha} \norm{\underline{Z}_s}_{\zset_{\alpha}} .
  \end{align*}
  and
  \begin{multline*}
    \int_{t_0}^{t_1} \left( \int_{t_0}^t \norm{e^{(t-s)A} (\abs{Z_s}^2 Y_s)}_{\B^{\gamma + \epsilon}_{3p}} ds \right)^3 dt \\
    \lesssim \left( \int_{t_0}^{t_1} (t_1 - s)^{-\frac{\alpha + \gamma + \epsilon}{2} - \frac{2}{3p}} ds \right)^3
    \int_{t_0}^{t_1} \left(C s^{-\alpha} \norm{\underline{Z}_s}_{\zset_{\alpha}}  \right)^3 ds
    \lesssim C^3  \znorm{\underline{Z}}_{\alpha, T}^3.
  \end{multline*}
  Estimates on the other terms in the integrand are similar. The term $e^{(t-t_0)A}Y_{t_0}$ can be estimated
  as in the proof of Lemma \ref{lem:a_priori_estimate_1}.

  We next evaluate $\sup$-norm of $Y_t$. Again the strategy is identical to Lemma \ref{lem:a_priori_estimate_2}.
  Since
  \begin{equation*}
    \norm{e^{(t-s)A} (\abs{Y_s}^2 Y_s)}_{\B^{\gamma + \epsilon}_p} \lesssim \norm{Y_s}_{\B^{\gamma + \epsilon}_{3p}}^3,
  \end{equation*}
  the first part of the proof shows
  \begin{equation*}
    \sup_{t_0 \leq t \leq t_1} \int_{t_0}^t \norm{e^{(t-s)A} (\abs{Y_s}^2 Y_s)}_{\B^{\gamma + \epsilon}_p} ds
    \lesssim (C + t_0^{-1} + \norm{\underline{Z}}_{\alpha, T})^{\kappa}.
  \end{equation*}
  Next, we have
  \begin{equation*}
    \norm{e^{(t-s)A} (Z_s \abs{Y_s}^2)}_{\B^{\gamma + \epsilon}_p}
    \lesssim (t-s)^{-\frac{\alpha + \gamma + \epsilon}{2} - \frac{1}{p}} \norm{\underline{Z}_s}_{\zset_{\alpha}} \norm{Y_s}_{\B^{\gamma}_p}^2.
  \end{equation*}
  We note that $\frac{\alpha+\gamma+\epsilon}{2} + \frac{1}{p} < 1$ since $\frac{5}{6} - \frac{2}{3p} < 1 - \frac{1}{p}$.
  We have
  \begin{equation*}
    \norm{e^{(t-s)A} (\abs{Z_s}^2 Y_s)}_{\B^{\gamma + \epsilon}_p}
    \lesssim (t-s)^{-\frac{\alpha + \gamma + \epsilon}{2}} s^{-\alpha} \norm{\underline{Z}_s}_{\zset_{\alpha}} \norm{Y_s}_{\B^{\gamma}_p}
  \end{equation*}
  and the other terms in the integrand can be similarly estimated. The term $e^{(t-t_0)A}Y_{t_0}$ can be handled as in
  the proof of Lemma \ref{lem:a_priori_estimate_1}.
\end{proof}

\begin{theorem}\label{thm:a_priori_estimate_4}
  Let $\beta \in (0, 2)$.
  Then for all sufficiently small $\alpha \in (0, 1)$, there exists $\kappa  = \kappa(\beta) \in (0, \infty)$ such that
  for $0 < t_0 < t_1 \leq T$ and $Y \in S^{t_1}(Y_0, \underline{Z})$,
  \begin{equation*}
    \sup_{t_0 \leq t \leq t_1} \norm{Y_t}_{\contisp^{\beta}} \lesssim_{\beta, \alpha, T} (t_0^{-1} +
    \znorm{\underline{Z}}_{\alpha, T})^{\kappa}.
  \end{equation*}
\end{theorem}
\begin{proof}
  Set $B(t_0) \defby t_0^{-1} +  \znorm{\underline{Z}}_{\alpha, T}$.
  According to Lemma \ref{lem:a_priori_estimate_1} and Lemma \ref{lem:a_priori_estimate_2},
  we can find $p \in (2, \infty)$, $\epsilon \in (0, 1)$ and $\kappa_1 \in (0, \infty)$ such that
  \begin{equation*}
    \int_{t_0}^{t_1} \norm{Y_t}_{\B^{\epsilon}_{3p}}^3 dt + \sup_{t_0 \leq t \leq t_1}
    \norm{Y_t}_{\B^{\epsilon}_p} \lesssim B(t_0)^{\kappa_1}.
  \end{equation*}
  Then, using Lemma \ref{lem:a_priori_estimate_3} repeatedly, we can find
  \begin{equation*}
     \gamma > \frac{5}{3} - \frac{4}{3p} - 2\alpha
  \end{equation*}
  and $\kappa_2 \in (0, \infty)$ such that
  \begin{equation*}
    \sup_{t_0 \leq t \leq t_1} \norm{Y_t}_{\B^{\gamma}_p} \lesssim B(t_0)^{\kappa_2}.
  \end{equation*}
  Assuming $\alpha$ is sufficiently small, we can suppose $\gamma > 1$.
  Then Proposition \ref{prop:besov_embedding} implies
  \begin{equation*}
    \sup_{t_0 \leq t \leq t_1} \norm{Y_t}_{\contisp^{\delta}} \lesssim
    \sup_{t_0 \leq t \leq t_1} \norm{Y_t}_{\B^{\delta + \frac{2}{p}}_p} \lesssim B(t_0)^{\kappa_2}
  \end{equation*}
  provided $\delta + \frac{2}{p} < \gamma$.
  Now recall that we have
  \begin{equation*}
    Y_t = e^{(t-\frac{t_0}{2})A} Y_{\frac{t_0}{2}} + \int_{\frac{t_0}{2}}^t e^{(t-s)A} \Psi(Y_s, \underline{Z}_s) ds.
  \end{equation*}
  Proposition \ref{prop:besov_embedding} and Corollary \ref{cor:strong_L^p_estimate_independent_of_initial} imply
  \begin{equation*}
  \sup_{t_0 \leq t \leq t_1} \norm{e^{(t-\frac{t_0}{2})A} Y_{\frac{t_0}{2}}}_{\contisp^{\beta}} \lesssim B(t_0)^{\kappa_3}
  \end{equation*}
  for some $\kappa_3 \in (0, \infty)$. If $\alpha < \delta$, we have
  $\norm{\Psi(Y_s, \underline{Z}_s)}_{\contisp^{-\alpha}} \lesssim B(t_0)^{\kappa_4} s^{-\alpha}$ for
  $s \in [\frac{t_0}{2}, t_1]$, and hence
  \begin{equation*}
    \int_{\frac{t_0}{2}}^t \norm{e^{(t-s)A} \Psi(Y_s, \underline{Z}_s)}_{\contisp^{\beta}} ds
    \lesssim t^{1 - \frac{\beta + 2\alpha}{2}} B(t_0)^{\kappa_4}. \qedhere
  \end{equation*}
\end{proof}

\begin{proof}[Proof of Theorem \ref{thm:global_wellposedness}]
  Fix $T \in (0, \infty)$. Set
  \begin{equation*}
    K \defby  \znorm{\underline{Z}}_{\alpha, T}
    + \sup \set{ \sup_{0 \leq t \leq t_1} \norm{Y_t}_{\contisp^{-\alpha_0}} \given t_1 \in (0, T], Y \in S^{t_1}(Y_0,
    \underline{Z})}.
  \end{equation*}
  On the one hand, by Theorem \ref{thm:local_wellposedness}, there exists $T^* \in (0, T]$ such that
  $\# S^{T^*}(Y_0, \underline{Z}) = 1$ and for $Y \in S^{T^*}(Y_0, \underline{Z})$,
  \begin{equation*}
    \sup_{0 \leq t \leq T^*} \norm{Y_t}_{\contisp^{-\alpha_0}} < \infty.
  \end{equation*}
  On the other hand, by Theorem \ref{thm:a_priori_estimate_4},
  \begin{equation*}
    \sup \set{ \sup_{T^* \leq t \leq t_1} \norm{Y_t}_{\contisp^{-\alpha_0}} \given t_1 \in [T^*, T], Y \in S^{t_1}(Y_0,
    \underline{Z})} < \infty.
  \end{equation*}
  Consequently, $K < \infty$.
  Therefore, we can repeatedly use the fixed point argument in the proof of Theorem \ref{thm:local_wellposedness}
  to construct a global solution over $[0, T]$.
  Uniqueness can be shown as in \emph{STEP 2} in the proof of Theorem \ref{thm:local_wellposedness}.
\end{proof}
\begin{remark}\label{rem:SCGL_in_the_plane}
  We believe that the method of \cite{MW2dim} enables us to construct a solution of the two-dimensional SCGL in the plane.
  However, there is a technical obstacle. Indeed, as in \cite{MW2dim}, in order to construct a solution in the plane,
  we have to carry out our analysis in weighted Besov spaces $\hat{\B}_{p, q}^{\alpha, \sigma}$
  (see Definition \ref{def:weighted_Besov}).
  Then, we have to change weights $\sigma$ when applying Besov embeddings for weighted Besov spaces
  (\cite[\largeprop 2]{MW2dim}). Since Besov embeddings (Proposition \ref{prop:besov_embedding}-(ii)) played a crucial role
  in our approach to the two-dimensional SCGL on the torus, the combination of the method of \cite{MW2dim} and our approach
  yields construction of a solution in the plane taking values in $\hat{\B}_{p, \infty}^{-\alpha, \sigma}$ only for large
  $\sigma \in (2, \infty)$.
\end{remark}

\subsection{Continuity with respect to input data and coming down from infinity}\label{subsec:continuity_and_coming_down}
We begin to prove that the solution of \eqref{eq:pde_of_Y} depends continuously on the input data.
\begin{proposition}\label{prop:continuity_parameters}
  Let $\alpha_0 \in (0, \frac{2}{3})$, $\beta \in (0, 2)$, $T \in (0, \infty)$ and $R \in [1, \infty)$.
  Assume $\alpha \in (0, 1)$ is so small that Theorem \ref{thm:a_priori_estimate_4} holds.
  Suppose that $\alpha_1 \in (0, \infty)$ and $\gamma \in (0, \frac{1}{3})$ satisfy \eqref{eq:condition_of_alpha_and_gamma}
  and that $Y_0^{(1)}, Y_0^{(2)} \in \contisp^{-\alpha_0}$ and $\underline{Z}^{(1)}, \underline{Z}^{(2)} \in \zset$
  satisfy
  \begin{equation*}
    \norm{Y_0^{(1)}}_{\contisp^{-\alpha_0}} + \norm{Y_0^{(2)}}_{\contisp^{-\alpha_0}}
    + \sup_{0 < t \leq T} \norm{\underline{Z}_t^{(1)}}_{\zset_{\alpha}} +
    \sup_{0 < t \leq T} \norm{\underline{Z}_t^{(2)}}_{\zset_{\alpha}} \leq R.
  \end{equation*}
  Let $Y^{(i)} \in S^T(Y_0^{(i)}, \underline{Z}^{(i)})$.

  Then there exist positive constants $\kappa = \kappa(\alpha_0)$, $\kappa' = \kappa'(\alpha_0,
  \alpha_1, \gamma)$ and
  $\kappa'' = \kappa''(\beta)$ such that
  \begin{align*}
    &\sup_{0 \leq t \leq T}  \norm{Y_t^{(1)}- Y_t^{(2)}}_{\contisp^{-\alpha_0}}
    \lesssim_{\alpha_0, \alpha, T}
    R^{\kappa} D, \\
    &\sup_{0 < t \leq T} t^{\gamma} \norm{Y_t^{(1)}- Y_t^{(2)}}_{\contisp^{\alpha_1}} \lesssim_{\alpha_0, \alpha_1, \alpha, \gamma, T}
    R^{\kappa'} D, \\
    &\sup_{t_0 < t \leq T}  \norm{Y_t^{(1)}- Y_t^{(2)}}_{\contisp^{\beta}} \lesssim_{\alpha_1, \alpha, \gamma, \beta, t_0, T}
    R^{\kappa''} D,
  \end{align*}
  where $D \defby \norm{Y_0^{(1)} - Y_0^{(2)}}_{\contisp^{-\alpha_0}} + \sup_{0 \leq t \leq T} \norm{\underline{Z}_t^{(1)}
  - \underline{Z}_t^{(2)}}_{\zset_{\alpha}}$.
\end{proposition}
\begin{proof}
  We only prove the bound on $\contisp^{\alpha_1}$-norm.
  The other bounds can be proved similarly.
  By Remark \ref{remark:form_of_T_star} and Theorem \ref{thm:a_priori_estimate_4}, we have
  \begin{equation*}
    \sup_{0 < t \leq T} t^{\gamma} \norm{Y_t}_{\contisp^{\alpha_1}} \lesssim R^{\kappa_1}.
  \end{equation*}
  Therefore,
  \begin{multline*}
    \norm{\Psi(Y_s^{(1)}, \underline{Z}_s^{(1)}) - \Psi(Y_s^{(2)}, \underline{Z}_s^{(2)})}_{\contisp^{\alpha_1}}
    \\ \lesssim R^{\kappa_2} s^{- 3 \gamma} \left( s^{\gamma} \norm{Y_s^{(1)} - Y_s^{(2)}}_{\contisp^{\alpha_1}} +
    \norm{\underline{Z}_s^{(1)} - \underline{Z}_s^{(2)}}_{\zset_{\alpha}}  \right).
  \end{multline*}
  By applying the above estimate to
  \begin{equation*}
    Y^{(1)}_t - Y^{(2)}_t = e^{tA}(Y^{(1)}_0 - Y^{(2)}_0)
    + \int_0^t e^{(t-s)A}\{ \Psi(Y_s^{(1)}, \underline{Z}_s^{(1)}) - \Psi(Y_s^{(2)}, \underline{Z}_s^{(2)}) \} ds,
  \end{equation*}
  we obtain
  \begin{multline*}
    t^{\gamma} \norm{Y_t^{(1)} - Y_t^{(2)}}_{\contisp^{\alpha_1}}
    \leq c_1  \norm{Y_0^{(1)} - Y_0^{(2)}}_{\contisp^{-\alpha_0}} \\
    + c_2  R^{\kappa_2} \sup_{0<s \leq t} \norm{\underline{Z}_s^{(1)} - \underline{Z}_s^{(2)}}_{\zset_{\alpha}}
    + c_3 t^{\gamma'} R^{\kappa_3} \sup_{0 < s \leq t} s^{\gamma} \norm{Y_s^{(1)} - Y_s^{(2)}}_{\contisp^{\alpha_1}}.
  \end{multline*}
  Setting $n \defby \lfloor (2 c_3 R^{\kappa_3})^{\frac{1}{\gamma'}} T \rfloor + 1$, we obtain
  \begin{equation*}
    \sup_{0 < t \leq T/n} t^{\gamma} \norm{Y_t^{(1)} - Y_t^{(2)}}_{\contisp^{\alpha_1}}
    \lesssim \norm{Y_0^{(1)} - Y_0^{(2)}} + R^{\kappa_2}
    \sup_{0 < t \leq T} \norm{\underline{Z}_t^{(1)} - \underline{Z}_t^{(2)}}_{\zset_{\alpha}}.
  \end{equation*}
  Repeating this process on $[\frac{T}{n}, \frac{2T}{n}], \ldots, [\frac{(n-1)T}{n}, T]$, we obtain the desired inequality.
\end{proof}

From now on, we derive some results of solutions of the SCGL \eqref{eq:CGL}
from the results obtained before. Therefore, we work in a probabilistic framework.
\begin{definition}\label{def:def_of_CGL}
  Let $\alpha_0 \in (0, \frac{2}{3})$, $u_0 \in \contisp^{-\alpha_0}$ and
  \begin{equation*}
    \underline{Z}_t \defby (Z^{:1,0:}(0,t), Z^{:2,0:}(0,t), Z^{:1,1:}(0,t), Z^{:2, 1:}(0,t)),
  \end{equation*}
  where $Z^{:k,l:}(0,t)$ is the nonstationary Ornstein-Uhlenbeck process constructed in
  \ref{subsection:nonstationary_OU}. Let $Y \in S^T(u_0, \underline{Z})$ for every $T \in (0, \infty)$.
  We call $u(t; u_0) \defby Z^{:1,0:}(0,t) + Y_t$ the solution of \eqref{eq:CGL}.
  We simply write $u(t)$ when the initial value is not important.
\end{definition}
By Proposition \ref{prop:continuity_parameters}, we can prove a slight generalization of Theorem \ref{thm:convergence_of_smooth_cgl}.
\begin{corollary}\label{cor:convergence_of_smoothed_solution}
  Assume
  \begin{enumerate}[(i)]
    \item $\rho \in \S(\R \times \R^2)$ and $\rho_{\delta}(t, x) \defby {\delta}^{-4}\rho(\frac{t}{\delta^2}, \frac{x}{\delta})$ and
    \item $u_{0;n} \to u_0$ in $\contisp^{-\alpha_0}$.
  \end{enumerate}
  Set $\xi_{\delta} \defby \xi_1 * \rho_{\delta}$ and let $u_{\delta; n}$ be the solution of
  \begin{equation*}
    \left \{ \begin{aligned}
    &\partial_t u_{\delta; n} = (i + \mu) \Delta u_{\delta; n} - \nu \left(\abs{u_{\delta; n}}^2 u_{\delta;n}
    - 2 c_{1; \delta} u_{\delta;n} \right) + \lambda u_{\delta; n} + \xi_{\delta}, \\
    &u_{\delta; n}(0, \cdot) = u_{0;n},
  \end{aligned} \right.
  \end{equation*}
  where $c_{1; \delta}$ is the renormalization constant given in \eqref{eq:def_of_renormalized_constant}.
  Then, for every $p \in [1, \infty)$, $T \in (0, \infty)$, $\alpha \in (0, \infty)$ and $\gamma \in (\frac{\alpha_0}{2}, \infty)$,
  \begin{align*}
    &\lim_{\delta \to 0,\, n \to \infty} \expect[ \sup_{0 \leq t \leq T} \norm{u(t;u_0) - u_{\delta; n}(t)}_{\contisp^{-\alpha_0}}^p ] = 0, \\
    &\lim_{\delta \to 0, \, n \to \infty} \expect[ \sup_{0 < t \leq T} t^{p\gamma}\norm{u(t;u_0) - u_{\delta; n}(t)}_{\contisp^{-\alpha}}^p ] = 0.
  \end{align*}
\end{corollary}
\begin{proof}
  Let $Z^{:k,l:}(0, t) \defby Z_M^{:k,l:}(0, t)$ be the nonstationary Ornstein-Uhlenbeck process constructed in
  \ref{subsection:nonstationary_OU} and
  \begin{equation*}
    Z^{(\delta)}(0, t) \defby \int_0^t e^{(t-s)A} \xi_{\delta}(s) ds.
  \end{equation*}
  Set
  \begin{align*}
    \underline{Z}(0, t) &\defby (Z(0,t), Z^{:2,0:}(0,t), Z^{:1,1:}(0,t), Z^{:2,1:}(0, t)), \\
    \underline{Z}^{(\delta)}(0, t) &\defby (H_{k,l}(Z^{(\delta)}(t), c_{M;\delta}))_{(k,l) =
    (1,0), (2,0), (1,1), (2,1)}.
  \end{align*}
  Theorem \ref{thm:convergence_of_mollified_processes} implies that
  \begin{equation*}
    \lim_{\delta \to 0} \expect\left[ \sup_{0 < t \leq T} \norm{\underline{Z}(0,t) -
    \underline{Z}^{(\delta)}(0, t)}_{\zset_{\alpha'}}^p \right] = 0.
  \end{equation*}
  Therefore, the claim follows from Proposition \ref{prop:continuity_parameters}.
\end{proof}
\begin{remark}\label{remark:convergence_of_smoothed_solution}
  A similar result holds for approximations which mollify the noise $\xi$
  only with respect to the spatial variable. See Remark \ref{remark:convergence_of_mollified_processes}.
\end{remark}
Finally, we prove an estimate for the solution of \eqref{eq:CGL} which is uniform with respect to the initial condition
(Theorem \ref{thm:coming_down_from_infinity}).
This surprising estimate, called ``coming down from infinity" in \cite{MW3dim}, is due to the damping
from the nonlinear term.
\begin{lemma}\label{lem:solution_started_at_t}
  Let $Z = Z_M$ be the Ornstein-Uhlenbeck process and set
  \begin{equation*}
    \underline{Z}(s, t) \defby (Z(s,t), Z^{:2, 0:}(s, t), Z^{:1, 1:}(s, t), Z^{:2, 1:}(s, t)).
  \end{equation*}
  Let $u = Z + Y$ be the solution of \eqref{eq:CGL}.
  Then
  \begin{equation*}
    u(t+\cdot) - Z(t, t+\cdot) \in S^T(u(t), \underline{Z}(t, t+\cdot)) \quad \mbox{for every $T \in (0, \infty)$.}
  \end{equation*}
\end{lemma}
\begin{proof}
  Set $Y_{t, t+h} \defby e^{hA}Z(t) + Y_{t+h}$ so that $u(t+h) = Z(t, t+h) + Y_{t, t+h}$.
  We have
  \begin{equation*}
    Y_{t, t+h} = e^{hA} u(t) + \int_0^h e^{(h-r)A} \Psi(Y_{t+r}, \underline{Z}_{t+r}) dr.
  \end{equation*}
  Note that $\Psi(Y_{t+r}, \underline{Z}_{t+r})
  = -\nu(Y_{t+r} + Z(t+r))^{:2,1:} + (\lambda+1)(Y_{t+r} + Z(t+r))$ and
  \begin{equation*}
    Y_{t+r} + Z(t + r) = Y_{t+r} + Z(t, t+r) + e^{rA} Z(t) = Y_{t, t+r} + Z(t, t+r). \qedhere
  \end{equation*}
\end{proof}
\begin{proof}[Proof of Theorem \ref{thm:coming_down_from_infinity}]
  Set $\tilde{Y}(t- \frac{t_0}{2}, t) \defby u(t; u_0) - Z(t-\frac{t_0}{2}, t)$.
  Theorem \ref{thm:a_priori_estimate_4} and Lemma \ref{lem:solution_started_at_t}
  imply that
  \begin{equation*}
    \norm{\tilde{Y}(t-\frac{t_0}{2}, t)}_{\contisp^{-\alpha}}
    \lesssim (t_0^{-1} + \sup_{t-\frac{t_0}{2} < s \leq t} \norm{\underline{Z}(t-\frac{t_0}{2}, s)}_{\zset_{\alpha}})^{\kappa}.
  \end{equation*}
  Since $\set{\underline{Z}(t-\frac{t_0}{2}, s) \given s \in (t-\frac{t_0}{2}, t]}$ has the same law as
  $\set{\underline{Z}(0, s) \given s \in (0, \frac{t_0}{2}]}$, it remains to apply Theorem \ref{thm:main_estimate_of_OU}.
\end{proof}

\section{Strong Feller property}\label{sec:strong_Feller}
In this section, we again work on the fixed torus $\torus_M$.
As the actual value of $M$ is not important, we set $M = 1$.
We fix $\alpha_0 \in (0, \frac{2}{3})$ and $\alpha_1, \gamma$ satisfying \eqref{eq:condition_of_alpha_1_and_gamma}.
We fix small $\alpha \in (0, 1)$ and hence we write $\znorm{\underline{Z}}_T \defby \znorm{\underline{Z}}_{\alpha, T}$,
see \eqref{eq:z_norm_alpha_T}.
The aim of this section is to prove Theorem \ref{thm:strong_Feller},
the proof of which is given at the end of Subsection \ref{subsec:holder_continuity}.
This section follows \cite[Section 5]{TW18}.

\subsection{Markov property}\label{subsec:Markov_property}
\begin{proposition}\label{prop:markov_process}
  Let $u = u(\cdot; u_0)$ be the solution of \eqref{eq:CGL}. Then,
  $u$ is a Markov process on $\contisp^{-\alpha_0}$ with filtration $\{\F_t\}$ given in Definition
  \ref{def:definition_of_white_noise_and_filtration}-(ii).
  Furthermore, the Markov process $u$ is Feller.
\end{proposition}
\begin{proof}
  Let $t \in [0, \infty)$ and $h \in (0, \infty)$.
  Set $Y(t, t+h) \defby u(t+h) - Z(t, t+h)$.
  By Lemma \ref{lem:solution_started_at_t}, we have $Y(t, t+\cdot) \in S^T(u(t), \underline{Z}(t, t+\cdot))$.
  Since $ \underline{Z}(t, t+\cdot) $ is independent of $\F_t$ by Proposition \ref{prop:independence_of_Z_M_s_t} and
  the solution of the shifted equation \eqref{eq:pde_of_Y} is measurable with respect to the initial value
  and the driver by Proposition \ref{prop:continuity_parameters}, we see that for any bounded measurable function
  $\Phi: \contisp^{-\alpha_0} \to \R$,
  \begin{equation*}
    \expect[ \Phi(u(t+h)) \vert \F_t ] = \expect[\Phi(u(h); v)]\vert_{v = u(t)}.
  \end{equation*}
  Therefore, $u$ is a Markov process.

  To prove that $u$ is Feller, take $\Phi \in C_b(\contisp^{-\alpha_0}; \R)$ and $t \in (0, \infty)$.
  We need to show that the map
  \begin{equation*}
    \contisp^{-\alpha_0} \ni v \mapsto \expect[ \Phi(u(t); v)] \in \R
  \end{equation*}
  is continuous, but this follows from Proposition \ref{prop:continuity_parameters} and
  Lebesgue's dominated convergence theorem.
\end{proof}

\subsection{Approximation by a system of SDEs}\label{subsec:approximation_by_SDEs}
In the next subsection, we derive the most crucial formula,
called Bismut-Elworthy -Li formula, to prove the strong Feller property of $u$.
This subsection serves as preparation for the setting.

Heuristically, the proof of Bismut-Elworthy-Li formula consists of the following three steps;
\begin{enumerate}[(i)]
  \item We perturb a parameter, in our case the noise, from $\xi$ to $\xi^{\delta} \defby \xi + \delta \partial_t w$
  with $\delta \in (0, 1)$ and $w$ a Cameron-Martin path.
  \item Let $u^{\delta}$ be the solution of
  \begin{equation*}
    \left \{
    \begin{aligned}
      &\partial_t u^{\delta} = (i + \mu)\Delta u^{\delta} - \nu \abs{u^{\delta}}^2 u^{\delta} + \lambda u^{\delta}
       + \xi^{\delta}, \\
      &u^{\delta}(0, \cdot) = u_0.
    \end{aligned} \right.
  \end{equation*}
  In the light of Girsanov's theorem, we construct a probability measure $\P^{\delta}$ under which $u^{\delta}$
  has the same law as the original solution $u$ has
  under $\P$.
  \item Then, we have $\left.\frac{d}{d\delta}\right\rvert_{\delta = 0} \expect^{\P^{\delta}}[ \Phi(u^{\delta}) ] = 0$.
  We compute the left hand side to obtain the formula.
\end{enumerate}

As in \cite{TW18}, however, we consider a finite dimensional approximation of the equation \eqref{eq:CGL}, in order to
avoid the language of Malliavin calculus.
More precisely, we will work on the vector space spanned by $\{ e_m \}_{\abs{m} \leq n}$.
However, the nonlinear operation $u \mapsto \abs{u}^2 u$ is not closed in this space.

Therefore, we introduce cutoff operators.
We take $\rho \in \S(\R^2)$ such that $\F \rho = \indic_{B(0, \frac{1}{2})}$
on $B(0, \frac{1}{2}) \cup (\R^2 \setminus B(0, 1))$.
Then set $\rho^{(n)}(x) \defby n^2 \rho(nx)$. Notice that $\F \rho^{(n)}$ has its support in $B(0, n)$.
We define an operator $\Pi_n: \S'(\torus) \to C^{\infty}(\torus)$ by
\begin{equation*}
  \Pi_n( \sum_{m \in \Z^2} a_m e_m ) \defby \sum_{m \in \Z^2} [\F \rho^{(n)}](m) a_m e_m.
\end{equation*}
\begin{lemma}\label{lem:properties_of_Pi}
  The operator $\Pi_n$ defined above satisfies the following:
  \begin{enumerate}[(i)]
    \item There exists a constant $C \in (0, \infty)$ such that
    $\norm{\Pi_n}_{\contisp^{\lambda} \to \contisp^{\lambda}} \leq C$ for every $n \in \N$ and $\lambda \in \R$.
    \item There exists a constant $C \in (0, \infty)$ such that
    $\norm{\Pi_n - \operatorname{id}}_{\contisp^{\lambda} \to \contisp^{\lambda - \delta}} \leq C 2^{-n \delta}$
    for every $n \in \N$, $\lambda \in \R$ and $\delta \in (0, 1)$.
  \end{enumerate}
\end{lemma}
\begin{proof}
  Recall the notations from Subsection \ref{subsection:notation_for_Besov}.
  Let $f \in \contisp^{\lambda}$. Since $\Pi_n$ is a Fourier multiplier, we have
  $\delta_k \Pi_n f = \Pi_n \delta_k f$. Therefore
  \begin{equation*}
    \norm{\Pi_n f}_{\contisp^{\lambda}} = \sup_{k \geq -1} 2^{k \lambda} \norm{\delta_k \Pi_n f}_{L^{\infty}} \leq
    \norm{\Pi_n}_{L^{\infty} \to L^{\infty}} \norm{f}_{\contisp^{\lambda}}.
  \end{equation*}
  As $\norm{\Pi_n}_{L^{\infty} \to L^{\infty}} \leq \norm{\rho^{(n)}}_{L^1(\R^2)} = \norm{\rho}_{L^1(\R^2)} < \infty$
  by Young's convolution inequality, we end the proof of (i).

  To prove (ii), we observe that there exists a constant $n_0 \in \N$ such that $k + n_0 \leq n$ implies
  $\delta_k (\Pi_n - \operatorname{id}) = 0$. Therefore
  \begin{equation*}
    \norm{\delta_k(\Pi_n f - f)}_{L^{\infty}} \lesssim \norm{\delta_k f}_{L^{\infty}} \indic_{\{k>n-n_0\}}.
  \end{equation*}
  Thus, for $f \in \contisp^{\lambda}$,
  \begin{equation*}
  \norm{\Pi_n f - f}_{\contisp^{\lambda - \delta}}
  = \sup_{k \geq -1} 2^{k (\lambda - \delta)} \norm{\delta_k (\Pi_n f - f)}_{L^{\infty}}
  \lesssim 2^{-(n-n_0)\delta} \norm{f}_{\contisp^{\lambda}}. \qedhere
  \end{equation*}
\end{proof}

Heuristically, we approximate the noise $\xi$ by
\begin{equation*}
  \xi^{(n)}(t, x) = \frac{\partial}{\partial t} \sum_{\abs{m} \leq n} a_m^{(n)} W_m(t) e_m(x),
\end{equation*}
where $\{W_m\}_{m \in \Z^2}$ are i.i.d. complex Brownian motions, i.e.
$W_m(t) = \beta_{m, 1}(t) + i \beta_{m, 2}(t)$ where $\beta_{m, 1}$ and $\beta_{m, 2}$ are independent $\R$-valued
Brownian motions with
\begin{equation*}
  \expect[ \beta_1(t)^2 ] = \expect[ \beta_2(t)^2 ] = \frac{t}{2}.
\end{equation*}
We consider a system of SDEs
\begin{equation}\label{eq:approximate_sdes}
  \left \{
  \begin{aligned}
    &d u^{(n)} = \{ (i+\mu)\Delta u^{(n)} - \nu \Pi_n [H_{2, 1}(u^{(n)}, c_n)] + \lambda u^{(n)} \} dt  \\
    &\hspace{18em}+\sum_{\abs{m} \leq n} a_m^{(n)} e_m dW_m(t), \\
    &u^{(n)}(0, \cdot) = \Pi_n u_0,
  \end{aligned} \right.
\end{equation}
where $H_{2,1}$ is a complex Hermite polynomial and $c_n \in \R$ will be specified soon.

Now we check that this system of SDEs \eqref{eq:approximate_sdes} indeed approximates the original SPDE \eqref{eq:CGL}
for appropriately chosen $a_m^{(n)}$ and $c_n$.
Take $\rho$ and $\rho^{(n)}$ as above.
Recall that $\inp{\cdot}{\cdot}_1$ is the inner product of $L^2(\torus_1)$ and that
$\inp{\cdot}{\cdot}_{\infty}$ is the inner product of $L^2(\R^2)$.
We set $Z^{(n)}(0, t; x) \defby \inp{Z(0,t)}{\rho^{(n)}(x-\cdot)}_{\infty}$.
Then we have
\begin{equation*}
  Z^{(n)}(0, t) = \sum_{\abs{m} \leq n}[\F \rho^{(n)}](m) \inp{Z(0,t)}{e_{-m}}_1 e_m.
\end{equation*}
If we set
\begin{equation*}
  W_m(t) \defby \int \indic_{[0, t] \times \torus_1}(s, y) e_{-m}(y) \xi(ds dy)
\end{equation*}
and replace it by a continuous modification,
$\{W_m\}_{m \in \Z^2}$ are i.i.d. complex Brownian motions and we have almost surely
(see \eqref{eq:def_of_kerel_K_M} for the definition of $K_1$)
\begin{align*}
  \inp{Z(0, t)}{e_{-m}}_1 &= \int \indic_{[0,t] \times \torus_1}(s, y) \inp{K_1(t-s, \cdot - y)}{e_{-m}}_1 \xi(dsdy) \\
  &= \int \indic_{[0,t]\times \torus_1}(s, y) e_{-m}(y) e^{-(t-s)[(i+\mu)4\pi^2 \abs{m}^2 + 1]} \xi(dsdy) \\
  &=\int_0^t e^{-(t-s)[(i+\mu)4\pi^2 \abs{m}^2 + 1]} dW_m(s).
\end{align*}
We can obtain the last equality by approximating the function
\begin{equation*}
s \mapsto e^{-(t-s)[(i+\mu)4\pi^2 \abs{m}^2 + 1]}
\end{equation*}
by step functions. Therefore $Z^{(n)}(0, t)$ is the solution of
\begin{equation}\label{eq:approximate_Z}
  \left \{
  \begin{aligned}
    &\partial_t Z^{(n)}(0, t) = A Z^{(n)}(0, t) dt  + \sum_{\abs{m} \leq n} a^{(n)}_m e_m dW_m(t),\\
    &Z^{(n)}(0,0) = 0,
  \end{aligned} \right.
\end{equation}
where $a^{(n)}_m \defby [\F \rho^{(n)}](m)$.

Next we set (see \eqref{eq:def_of_renormalized_constant})
\begin{equation*}
  c_n \defby \int_{\R \times \torus_1} \abs{\inp{K_1(s, \cdot - y)}{\rho^{(n)}}_{\infty}}^2 ds dy
\end{equation*}
and
\begin{equation}\label{eq:def_of_approximate_underline_z}
  \underline{Z}^{(n)}(t) \defby
  ( H_{k,l}(Z^{(n)}(0,t), c_n) )_{(k,l) = (1, 0), (2, 0), (1, 1), (2, 1)}.
\end{equation}
Let $Y^{(n)}$ be the solution of
\begin{equation}\label{eq:pde_of_approximate_Y}
  \left \{
  \begin{aligned}
    &\partial_t Y^{(n)} = AY^{(n)} + \Pi_n\Psi(Y^{(n)}, \underline{Z}^{(n)}), \\
    &Y^{(n)}(0) = \Pi_n u_0,
  \end{aligned} \right.
\end{equation}
where $\Psi(Y^{(n)}, \underline{Z}^{(n)}) \defby H_{2, 1}(Y^{(n)} + Z^{(n)}(0, \cdot))$.
According to Lemma \ref{lem:global_wellposedness_of_approximate_Y}, the explosion time of $Y^{(n)}$ goes to infinity in probability.
Furthermore, Lemma \ref{lem:global_wellposedness_of_approximate_Y}
also shows that if we take $\tilde{Y}^{(n)} \in S^{T}(\Pi_n u_0, \underline{Z}^{(n)})$
(see Definition \ref{def:definition_of_Y} for $S^T$),
$\tilde{Y}^{(n)} - Y^{(n)}$ converges to $0$ in $C([0, T]; \contisp^{-\alpha_0})$.

Finally, we set $u^{(n)} \defby Z^{(n)}(0, \cdot) + Y^{(n)}$. Then $u^{(n)}$ solves \eqref{eq:approximate_sdes}.
Since $Z^{(n)}(0, t) + \tilde{Y}^{(n)}$ converges to the original solution $u$ of \eqref{eq:CGL}
by Corollary \ref{cor:convergence_of_smoothed_solution},
$u^{(n)}$ converges to $u$ as well.
We set $d(n) \defby \# \set{m \in \Z^2 \given \abs{m} \leq n}$.
\begin{lemma}\label{lem:global_wellposedness_of_approximate_Y}\leavevmode
  \begin{enumerate}[(i)]
    \item Let $T, R \in (0, \infty)$ and $(w_m)_{\abs{m} \leq n} \in C([0, T]; \C^{d(n)})$
    with $w(0) = 0$.
    We set
    \begin{equation*}
      Z^{(n)}(0, t; x) \defby \sum_{\abs{m} \leq n} a_m^{(n)} e_m(x) \int_0^t
      e^{-(t-s)[4\pi^2(i+\mu) \abs{m}^2 + 1]} dw_m(s)
    \end{equation*}
    where the integral is in the sense of Riemann-Stiltjes. We define $\underline{Z}^{(n)}$ by
    \eqref{eq:def_of_approximate_underline_z}.
    Furthermore, let $Y^{(n)}$ be the solution of
    the system of the ordinary differential equations \eqref{eq:pde_of_approximate_Y}
    and $\sigma^{(n)}$ be the explosion time of $Y^{(n)}$.
    Then, there exists $N = N(T, R)$ such that $n \geq N$ and
    $\norm{u_0}_{\contisp^{-\alpha_0}} + \znorm{\underline{Z}^{(n)}}_T \leq R$ implies $\sigma^{(n)} \geq T$.
    \item Let $W = (W_m)_{m \in \Z^2}$ be i.i.d. $\C$-valued Brownian motions. Define
    $Z^{(n)}$, $Y^{(n)}$ and $\sigma^{(n)}$ as above for $w = W$.
    Let $\tilde{Y}^{(n)} \in S^T(\Pi_n u_0, \underline{Z}^{(n)})$.
    Then,
    \begin{align*}
      &\lim_{n \to \infty} \P(\sigma^{(n)} \leq T) = 0 \quad \mbox{and}\\
      &\lim_{n \to \infty} \P(\sup_{0 \leq t \leq T} \norm{Y^{(n)}(t) - \tilde{Y}^{(n)}(t)}_{\contisp^{\alpha_1}} \geq \epsilon,
      \sigma^{(n)} > T) = 0
    \end{align*}
    for every $T \in (0, \infty)$ and $\epsilon \in (0, 1)$.
  \end{enumerate}
\end{lemma}
\begin{proof}
  Set $\tau^{(n)} \defby \inf \set{t > 0 \given t^{\gamma} \norm{Y^{(n)}(t) - \tilde{Y}^{(n)}(t)}_{\contisp^{\alpha_1}} \geq 1}$.
  We have $\sigma^{(n)} > \tau^{(n)}$ and for $t \leq \tau^{(n)}$
  \begin{equation}\label{eq:difference_Y_n_and_tilde_Y_n}
    Y^{(n)}(t) - \tilde{Y}^{(n)}(t) = \int_0^t e^{(t-s)A} \{ \Pi_n \Psi(Y^{(n)}(s), \underline{Z}^{(n)}(s))
    - \Psi(\tilde{Y}^{(n)}(s), \underline{Z}^{(n)}(s)) \} ds.
  \end{equation}
  Set
  \begin{equation*}
    K(n) \defby 1 + \sup_{0 < t \leq T}\left[ t^{3\gamma} \norm{\tilde{Y}^{(n)}(t)}^3_{\contisp^{\alpha_1}} +
     \norm{\underline{Z}^{(n)}(t)}_{\zset_{\alpha}}  \right].
  \end{equation*}
  Proposition \ref{prop:continuity_parameters} implies $K(n) \lesssim R^{\kappa}$ for some $\kappa \in (0, \infty)$.
  We have
  \begin{multline*}
    \norm{\Pi_n \Psi(Y^{(n)}(s), \underline{Z}^{(n)}(s)) - \Psi(\tilde{Y}^{(n)}(s),
    \underline{Z}^{(n)}(s))}_{\contisp^{-2\alpha}} \\
    \leq
    \norm{\Pi_n \Psi(Y^{(n)}(s), \underline{Z}^{(n)}(s)) -
    \Pi_n\Psi(\tilde{Y}^{(n)}(s), \underline{Z}^{(n)}(s))}_{\contisp^{-2\alpha}}\\
    +
    \norm{(\Pi_n - 1) \Psi(\tilde{Y}^{(n)}(s), \underline{Z}^{(n)}(s))}_{\contisp^{-2\alpha}}.
  \end{multline*}
  Suppose $s \leq \min \{\tau^{(n)}, T\}$ so that $\norm{Y^{(n)}(s) - \tilde{Y}^{(n)}(s)}_{\contisp^{\alpha_1}} \leq 1$.
  By Lemma \ref{lem:properties_of_Pi}-(i), the first term is bounded by
  \begin{equation*}
    C K(n) s^{-2\gamma} \norm{Y^{(n)}(s) - \tilde{Y}^{(n)}(s)}_{\contisp^{\alpha_1}}.
  \end{equation*}
  By Lemma \ref{lem:properties_of_Pi}-(ii), the second term is bounded by $\epsilon(n) K(n) s^{-3\gamma}$ where
  $\epsilon(n)$ is deterministic and converges to $0$ as $n \to \infty$.
  Therefore, applying Proposition \ref{prop:smoothing_in_besov} to \eqref{eq:difference_Y_n_and_tilde_Y_n},
  we obtain
  \begin{multline*}
    \sup_{0<s\leq t} \norm{Y^{(n)}(s) - \tilde{Y}^{(n)}(s)}_{\contisp^{\alpha_1}}\\
    \leq C_1 R^{\kappa} \epsilon(n) t^{\kappa_1} + C_2 R^{\kappa} t^{\kappa_2}
    \sup_{0<s\leq t} \norm{Y^{(n)}(s) - \tilde{Y}^{(n)}(s)}_{\contisp^{\alpha_1}}.
  \end{multline*}

  Set $m \defby \lfloor (2C_2 R^{\kappa})^{\frac{1}{\kappa_2}} T \rfloor + 1$ so that
  $C_2 R^{\kappa} (\frac{T}{m})^{\kappa_2} \leq \frac{1}{2}$.
  Then we have
  \begin{equation*}
    \sup_{0 \leq t \leq \min\{T_1, \tau^{(n)} \}} \norm{Y^{(n)} (t) - \tilde{Y}^{(n)}(t)}_{\contisp^{\alpha_1}}
    \leq C_3 \epsilon(n),
  \end{equation*}
  where $T_1 \defby \frac{T}{m}$ and $C_3 \defby 2 C_1 R^{\kappa} T_1^{\kappa_1}$.
  Note that $C_3 \epsilon(n) < 1$ implies $T_1 < \tau^{(n)}$. In this case, we have
  \begin{multline*}
    Y^{(n)}(T_1 + t) - \tilde{Y}^{(n)}(T_1 + t) =
    e^{tA} (Y^{(n)}(T_1) - \tilde{Y}^{(n)}(T_1)) \\
    + \int_0^t e^{(t-s)A} \{ \Pi_n \Psi(Y^{(n)}(T_1 + s), \underline{Z}^{(n)}(T_1+s))
    - \Psi(\tilde{Y}^{(n)}(T_1+s), \underline{Z}^{(n)}(T_1+s)) \} ds.
  \end{multline*}
  Repeating the above argument, we obtain
  \begin{equation*}
    \sup_{T_1 \leq t \leq \min\{2T_1, \tau^{(n)}\}} \norm{Y^{(n)}(t) - \tilde{Y}^{(n)}(t)}_{\contisp^{\alpha_1}}
    \leq (c+1) C_3 \epsilon(n),
  \end{equation*}
  where $c \defby \sup_{t \geq 0} \norm{e^{tA}}_{\contisp^{\alpha_1} \to \contisp^{\alpha_1}}$.
  Continuing, we have
  \begin{equation*}
    \sup_{\min\{(k-1)T_1, \tau^{(n)}\} \leq t \leq \min\{kT_1, \tau^{(n)}\}}
    \norm{Y^{(n)}(t) - \tilde{Y}^{(n)}(t)}_{\contisp^{\alpha_1}}
    \leq [(k-1)c + 1] C_3 \epsilon(n).
  \end{equation*}
  In particular, $\tau^{(n)} > T$ if $[(m-1)c + 1] C_3 \epsilon(n) < 1$.
  Therefore we end the proof of (i).

  Now we move to the proof of (ii). Since the above implies
  \begin{equation*}
    \limsup_{n \to \infty} \P(\sigma^{(n)} \leq T) \leq \P(\znorm{\underline{Z}}_T \geq R)
  \end{equation*}
  for every $T, R \in (0, \infty)$, we see that $\sigma^{(n)} \to \infty$ in probability.
  The convergence of $Y^{(n)} - \tilde{Y}^{(n)}$ is similar.
\end{proof}

\subsection{Bismut-Elworthy-Li formula}\label{subsec:bismut_elworthy_li_formula}
We fix $T, R \in (0, \infty)$ and take $N(T, 2R+1)$ in Lemma \ref{lem:global_wellposedness_of_approximate_Y}-(i).
In this subsection,
we will solely work on the equation \eqref{eq:approximate_sdes} for a fixed $n \geq N(T, 2R+1)$ and hence
we will omit indices for $n$ until the end of this subsection.
\begin{remark}
  In this subsection, every vector space is regarded as real. This is ultimately because we want
  the function $z \mapsto \conj{z}$ to be differentiable. For $x = (x_i), y=(y_i) \in \C^{d(n)}$,
  we set
  \begin{equation*}
  \realinp{x}{y} \defby \sum \{\Re(x_i) \Re(y_i) + \Im(x_i) \Im(y_i)\}.
  \end{equation*}
\end{remark}
The first task is to prove the Fr\'echet differentiability of $u$ with respect to
the initial value and the noise $(W_m)_{\abs{m} \leq n}$.
We view that the initial value belongs to the space $\contisp^{-\alpha_0}$ and that
the noise belongs to the space
\begin{equation*}
  \noisesp \defby \set{ (W_m)_{\abs{m} \leq n} \in C([0, T]; \C^{d(n)}) \given W_m(0) = 0 \mbox{ for all } m }.
\end{equation*}
We denote by $\initdiff_h$ and by $\noisediff_w$ the Fr\'echet derivative with respect to the initial value in the direction
of $h$ and
with respect to the noise in the direction of $w$ respectively.

Let $Z(0, \cdot)$ be the solution of \eqref{eq:approximate_Z}, or
\begin{equation}\label{eq:inp_of_Z_and_em}
  \inp{Z(0, t)}{e_{-m}}_1 = \indic_{\{\abs{m} \leq n\}}a_m \int_0^t e^{-(t-s)[4\pi^2(i+\mu) \abs{m}^2 + 1]} dW_m(s).
\end{equation}
Since the map $s \mapsto e^{-(t-s)[4\pi^2(i+\mu) \abs{m}^2 + 1]}$ is smooth, the integral is regarded as a
Riemann-Stieltjes integral. Therefore, it makes sense for every fixed $(W_m)_{\abs{m} \leq n} \in \noisesp$, and
the map
\begin{equation*}
  \noisesp \ni (W_m)_{\abs{m} \leq n} \mapsto Z(0, t) = \sum_{\abs{m} \leq n} \inp{Z(0, t)}{e_{-m}}_1 e_m
  \in \contisp^1
\end{equation*}
is linear and in particular Fr\'echet differentiable.
\begin{proposition}\label{prop:frechet_differentiability}
  For given $W = (W_m)_{\abs{m} \leq n}$, we
  define $Z = Z(W)$ by
  \begin{equation*}
    Z(0, t) \defby \sum_{\abs{m} \leq n} \inp{Z(0,t)}{e_{-m}}_1 e_m.
  \end{equation*}
  where  $\inp{Z(0,t)}{e_{-m}}_1$ is given by \eqref{eq:inp_of_Z_and_em}.
  Let $Y$ be the solution of \eqref{eq:pde_of_approximate_Y}.
  Set $u(t) \defby Z(0, t) + Y(t)$.
  If we set
  \begin{equation*}
    \mathcal{U} \defby \set{ (u_0, W) \in \contisp^{-\alpha_0} \times \noisesp
    \given \norm{u_0}_{\contisp^{-\alpha_0}} + \znorm{\underline{Z}(W)}_T \leq R},
  \end{equation*}
  where $\underline{Z}(W)$ is defined as in \eqref{eq:def_of_approximate_underline_z},
  then the map
  \begin{equation*}
    \mathcal{U} \ni (u_0, (W_m)_{\abs{m} \leq n} ) \mapsto u \in C([0, T]; \contisp^{-\alpha_0})
  \end{equation*}
  is Fr\'echet differentiable. Moreover, we have
  \begin{align}
    \label{eq:identity_of_initdiff}
    \initdiff_h u(t) ={}& \int_0^t e^{(t-s)A} \Pi_n \Psi'(Y(s), \underline{Z}(s)) ( \initdiff_h u(s) ) ds
     + e^{tA} \Pi_n h,\\
    \label{eq:identity_of_noisediff}
    \begin{split}
    \noisediff_w u(t) ={}&
    \int_0^t e^{(t-s)A} \Pi_n \Psi'(Y(s), \underline{Z}(s)) (\noisediff_w u(s)) ds  \\
    &\hspace{3em} + \sum_{\abs{m} \leq n} a_m \int_0^t e^{-(t-s)[4\pi^2(i+\mu) \abs{m}^2 + 1]} e_m dw_m(s),
    \end{split}
  \end{align}
  where
  \begin{equation*}
    \Psi'(Y(s), \underline{Z}(s))(\zeta) \defby
    \partial_z H_{2,1}(Y(s) + Z(0, s)) \zeta + \partial_{\conj{z}} H_{2,1}(Y(s) + Z(0, s)) \conj{\zeta}.
  \end{equation*}
\end{proposition}
\begin{remark}
  We restrict the domain to $\mathcal{U}$ in order to ensure the non-explosion of the solution $u$.
\end{remark}
\begin{proof}
  We have already checked that $\noisesp \ni (W_m) \mapsto Z(0, \cdot) \in C([0,T]; \contisp^1)$ is Fr\'echet differentiable.
  To prove the Fr\'echet differentiability of $Y$, we note that $Y = Y(u_0, Z(0, \cdot))$ is the zero of
  \begin{equation*}
    F(y; u_0, Z(0, \cdot))(t) \defby y_t - e^{tA} \Pi_n u_0 - \int_0^t e^{(t-s)A} \Pi_n
    \Psi(Y(s), \underline{Z}(s)) ds.
  \end{equation*}
  If we suppose that $y \in C([0, T]; \contisp^{-\alpha_0})$, $u_0 \in \contisp^{-\alpha_0}$ and $Z(0, \cdot) \in C([0, T]; \contisp^1)$,
  $F$ is Fr\'echet differentiable with respect to $y$, $u_0$ and $Z(0, \cdot)$.
  We want to apply the implicit function theorem for general Banach spaces. To do so, we need to check that
  the derivative $\partial_y F$ is nondegenerate.

  Lemma \ref{lem:global_wellposedness_of_approximate_Y} implies
  \begin{equation*}
    \sup_{0 \leq t \leq T} \norm{Y(u_0, Z(0, \cdot))(t)}_{\contisp^{-\alpha_0}} \lesssim_R 1.
  \end{equation*}
  Therefore, there exists a constant $C = C(R) \in (0, \infty)$ such that
  \begin{equation*}
    \sup_{0 \leq s \leq t} \norm{ \partial_y \int_0^s e^{(s-r)A} \Pi_n \Psi(y(r),
     \underline{Z}(r)) dr }_{\contisp^{-\alpha_0}
    \to \contisp^{-\alpha_0}} \vert_{y = Y(u_0, Z(0, \cdot))} \leq C t^{\kappa}.
  \end{equation*}
  If we set $T_1 \defby \min \{ (2C)^{-\frac{1}{\kappa}}, T \}$ and view that $F$ maps from
  $C([0, T_1]; \contisp^{-\alpha_0}) \times \contisp^{-\alpha_0} \times C([0, T]; \contisp^1)$ to
  $C([0, T_1]; \contisp^{-\alpha_0})$,
  the derivative $\partial_y F$ is invertiable at $y = Y(u_0, Z(0, \cdot))$.
  The implicit function theorem implies that the solution $Y \in C([0, T_1];$ $\contisp^{-\alpha_0})$ is
  differentiable around $(u_0, Z(0, \cdot))$. We can repeat this process by changing $F$ to
  \begin{multline*}
    F_1(y; u_0, Z(0, \cdot))(t) \\ \defby
    y_t - e^{tA} Y(u_0, Z(0, \cdot))(T_1) - \int_0^t e^{(t-s)A} \Pi_n
    \Psi(y(s), \underline{Z}(T_1 + s)) ds,
  \end{multline*}
  and see that $Y(T_1 + \cdot) \in C([0, T_1]; \contisp^{-\alpha_0})$ is differentiable. Continuing,
  we observe that $Y \in C([0, T]; \contisp^{-\alpha_0})$ is differentiable around $(u_0, Z(0, \cdot))$.
  Therefore, $Y$ is Fr\'echet differentiable with respect to $u_0$ and $Z(0, \cdot)$, and hence
  with respect to $u_0$ and $W = (W_m)$.
  Now the claim of differentiability follows.

  Finally, we prove \eqref{eq:identity_of_initdiff} and \eqref{eq:identity_of_noisediff}.
  As $Z(0, \cdot)$ is independent of $u_0$, we have $\initdiff u = \initdiff Y$. Then, \eqref{eq:identity_of_initdiff}
  follows by differentiating both sides of
  \begin{equation}\label{eq:mild_form_of_Y_n}
    Y(t) = e^{tA} \Pi_n u_0 + \int_0^t e^{(t-s)A} \Pi_n \Psi(Y_s, \underline{Z}(s)) ds
  \end{equation}
  with respect to the initial value.
  As $(W_m) \mapsto Z(0, \cdot)$ is linear, it is easy to see that
  $\noisediff_w Z(0, \cdot)$ equals to the second term of the right hand side of \eqref{eq:identity_of_noisediff}.
  Then, \eqref{eq:identity_of_noisediff} follows by differentiating both sides of
  \eqref{eq:mild_form_of_Y_n} with respect to the noise.
\end{proof}
We set
\begin{equation}\label{eq:def_of_tau}
  \tau_i \defby \inf\set{t \given \znorm{\underline{Z}}_t \geq i} \quad \mbox{for } i = 1, 2.
\end{equation}

\begin{lemma}\label{lem:exstence_of_T_star}
  Let $u$ and $Y$ be as above.
  Assume $\norm{x}_{\contisp^{-\alpha_0}} \leq R$. Then, there exists $\kappa \in (0, \infty)$ such that,
  if we set $T^* \defby (1+R)^{-\kappa}$, we have
  \begin{equation*}
    \sup_{t \leq \min\{T^*, \tau_2\}} t^{\gamma} \norm{Y(t;x)}_{\contisp^{\alpha_1}} \leq 1
    \quad \mbox{and} \quad \sup_{t \leq \min\{T^*, \tau_2\}} t^{\gamma}
    \norm{\initdiff_h u(t;x)}_{\contisp^{\alpha_1}} \leq  \norm{h}_{\contisp^{-\alpha_0}}
  \end{equation*}
  for every $h \in \contisp^{-\alpha_0}$.
\end{lemma}
\begin{proof}
  By exploiting the integral equations for $Y(t;x)$ and $\initdiff_h u(t;x)$,
  the estimates can be obtained as in the proof of
  Theorem \ref{thm:local_wellposedness} and Lemma \ref{lem:global_wellposedness_of_approximate_Y}.
\end{proof}
Let $\chi: \R \to [0, 1]$ be a smooth function such that $\chi(x) = \indic_{[0, 1]}(\abs{x})$
provided $\abs{x} \leq 1$ or $\abs{x} \geq 2$.
Set
\begin{equation*}
  Qw(t) \defby \sum_{\abs{m} \leq n} a_m \int_0^t e^{(t-s)A} e_m dw_m(s).
\end{equation*}
\begin{theorem}\label{thm:bismut_elworthy_li_formula}
  Let $w \in C([0, T]; \C^{d(n)})$ with $\partial_s w \in L^2([0, T]; \C^{d(n)})$ and with $\partial_s w$ adapted.
  Suppose that there exists a deterministic constant $C \in (0, \infty)$ such that
  $\norm{\partial_s w}_{L^2([0, T]; \C^{d(n)}} \leq C$ almost surely.
  In addition, let $W = (W_m)_{\abs{m} \leq n}$ be a complex Brownian motion and define
  $Z(0, \cdot), Y$ and $u$ as before.
  Finally, assume $\norm{u_0}_{\contisp^{-\alpha_0}} \leq R$.
  Then, we have the identity
  \begin{multline*}
    \expect[ D\Phi(u(t))(\noisediff_w u(t)) \chi(\znorm{\underline{Z}}_t) ]
    \\=- \expect[ \Phi(u(t)) \partial_{+} \chi(\znorm{\underline{Z}}_t)(w) ]
    + \expect \Big[ \Phi(u(t))  \chi(\znorm{\underline{Z}}_t)
    \int_0^t \realinp{\partial_s w(s)}{d W(s)} \Big],
  \end{multline*}
  where
  \begin{equation}\label{eq:one_sided_derivative_of_chi}
    \partial_{+} \chi (\znorm{\underline{Z}}_t)(w) \defby \chi'(\znorm{\underline{Z}}_t)
    \lim_{\delta \to 0+} \frac{\znorm{T^{\delta}\underline{Z}}_t - \znorm{\underline{Z}}_t}{\delta},
  \end{equation}
  \begin{equation*}
    T^{\delta}\underline{Z} \defby (H_{k,l}(Z(0, \cdot) + \delta Qw, c_n))_{(k,l) = (1,0), (2,0), (1, 1), (2, 1)}.
  \end{equation*}
\end{theorem}
\begin{remark}
  The existence of the limit in \eqref{eq:one_sided_derivative_of_chi} is
  guaranteed in \cite[Appendix D]{DPZ14}.
\end{remark}
\begin{proof}
  Set $\dot{w} \defby \partial_s w$. Let $Y^{\delta}$ be the solution of
  \eqref{eq:pde_of_approximate_Y} with $\underline{Z}^{(n)}$ replaced by $T^{\delta}\underline{Z}$.
  Furthermore, we set
  \begin{equation*}
    B^{\delta}(t) \defby - \delta \int_0^t \realinp{\dot{w}(s)}{dW(s)},
  \end{equation*}
  \begin{equation*}
    A^{\delta}(t) \defby \exp \left( B^{\delta}(t) - \frac{\delta^2}{4} \int_0^t \abs{\dot{w}(s)}^2 ds \right).
  \end{equation*}

  Since the assumption of $w$ implies
  \begin{equation*}
    \expect\left[ \exp\left( \frac{\delta^2}{4} \int_0^t \abs{\dot{w}(s)}^2 ds \right)\right] < \infty,
  \end{equation*}
  Novikov's condition is satisfied.
  Thus, if we define a probability measure $\P^{\delta}$ by $d \P^{\delta} = A^{\delta}(t) d\P$,
  the Girsanov theorem implies that $W^{\delta} \defby W + \delta w$ has the same law under $\P^{\delta}$ as $W$ has under $\P$.
  Therefore, we obtain
  \begin{equation}\label{eq:derivative_of_shifted_equation}
    \left. \frac{\partial}{\partial \delta} \right\rvert_{\delta = 0+}
    \expect^{\P}[ \Phi(u^{\delta}(t)) \chi(\znorm{T^{\delta} \underline{Z}}_t) A^{\delta}(t) ] = 0.
  \end{equation}
  Since we have
  \begin{align*}
    &\left. \frac{\partial}{\partial \delta}\right\rvert_{\delta=0}
    \Phi(u^{\delta}(t)) = D\Phi(u(t))(\noisediff_w u(t))  \\
    &\left. \frac{\partial}{\partial \delta}\right\rvert_{\delta=0} A^{\delta}(t) =
    - \int_0^t \realinp{\dot{w}(s)}{dW(s)},
  \end{align*}
  it remains to verify that we can interchange the differentiation and the expectation in \eqref{eq:derivative_of_shifted_equation}.

  To this end, we will prove
  \begin{equation}\label{eq:bound_of_difference_of_L^p_norm}
    \sup_{\delta \in (0, 1)} \expect \left[ \abs*{\frac{\Phi(u^{\delta}(t)) \chi(\znorm{T^{\delta} \underline{Z}}_t)
    A^{\delta}(t) - \Phi(u(t)) \chi(\znorm{\underline{Z}}_t)}{\delta}}^p \, \right] < \infty
  \end{equation}
  for $p \in (1, \infty)$.
  We estimate $\Phi(u^{\delta}(t)) - \Phi(u(t))$, $\chi(\znorm{T^{\delta}\underline{Z}}_t) - \chi(\znorm{\underline{Z}}_t)$ and
  $A^{\delta}(t) - 1$ separately.

  To begin with, we estimate
  \begin{align*}
    \abs{\Phi(u^{\delta}(t)) - \Phi(u(t))} &= \abs*{ \int_0^{\delta} D\Phi(u^{\lambda}(t)) ( \noisediff_w u^{\lambda}(t)) d\lambda } \\
    &\leq \norm{D\Phi}_{L^{\infty}} \int_0^{\delta}
    \norm{\noisediff_w u^{\lambda}(t)}_{\contisp^{-\alpha_0}} d\lambda.
  \end{align*}
  If $\chi({\znorm{\underline{Z}}_t}) > 0$, applying Gr\"onwall's inequality to \eqref{eq:identity_of_noisediff},
  $\norm{\noisediff_w u^{\lambda}(t)}_{\contisp^{-\alpha_0}}$
  is deterministically bounded, and therefore
  \begin{equation*}
    \abs{\Phi(u^{\delta}(t)) - \Phi(u(t))} \lesssim \delta.
  \end{equation*}

  Next, we observe that
  \begin{multline*}
    \abs{ \chi(\znorm{T^{\delta} \underline{Z}}_t) - \chi(\znorm{\underline{Z}}_t) } \\
    = \abs*{ \int_0^1 D\chi(\lambda \znorm{T^{\delta}\underline{Z}}_t + (1-\lambda) \znorm{\underline{Z}}_t)
    (\znorm{T^{\delta} \underline{Z}}_t - \znorm{\underline{Z}}_t) d\lambda} \lesssim \delta \znorm{Z}_t.
  \end{multline*}
  Similarly, we have
  \begin{equation*}
    \abs{A^{\delta}(t) - 1} \leq \int_0^{\delta} \abs{\partial_{\lambda} A^{\lambda}(t)} d\lambda.
  \end{equation*}
  We note that
  \begin{equation*}
    \partial_{\lambda}A^{\lambda}(t) = -A^{\lambda}(t)
    \Big( \int_0^t \realinp{\dot{w}(s)}{dW(s)} + \frac{\lambda}{2}\int_0^t \abs{v(s)}^2 ds \Big )
  \end{equation*}
  and that
  \begin{align*}
    &\expect \left[ \abs*{\frac{\Phi(u^{\delta}(t)) \chi(\znorm{T^{\delta} \underline{Z}}_t)
    (A^{\delta}(t)-1)}{\delta}}^p \right] \\
    &\lesssim_{\Phi, \chi, p}\expect \left[ \frac{1}{\delta} \int_0^{\delta}
    \abs*{ A^{\lambda}(t) \Big( \int_0^t \realinp{\dot{w}(s)}{dW(s)} + \frac{\lambda}{2}
    \int_0^t \abs{\dot{w}(s)}^2 ds \Big)}^p d\lambda \right] \\
    &\lesssim_p \sup_{\lambda \in (0, 1)} \expect\left[ A^{\lambda}(t)^{2p} \right] + \expect \left[ \Big \lvert
    \int_0^t \realinp{\dot{w}(s)}{dW(s)} \Big\rvert^{2p} \right]
    + \expect \left[ \Big( \int_0^t \abs{\dot{w}(s)}^2 ds \Big)^{2p} \right].
  \end{align*}
  Burkholder-Davis-Gundy inequalities and the assumption of $w$ imply
  \begin{equation*}
    \expect \left[ \Big \lvert
    \int_0^t \realinp{\dot{w}(s)}{dW(s)} \Big\rvert^{2p} \right]
    + \expect \left[ \Big( \int_0^t \abs{\dot{w}(s)}^2 ds \Big)^{2p} \right] < \infty.
  \end{equation*}

  Therefore, to prove \eqref{eq:bound_of_difference_of_L^p_norm}, it suffices to prove
  \begin{equation}\label{eq:sup_bound_of_martingale}
    \sup_{\lambda \in (0, 1)} \expect[ A^{\lambda}(t)^{2p} ]  < \infty.
  \end{equation}
  We have
  \begin{equation*}
    A^{\lambda}(t)^{2p} = A^{2p \lambda}(t) \exp \left( \frac{\lambda^2(4p^2-1)}{4} \int_0^t \abs{\dot{w}(s)}^2 ds \right).
  \end{equation*}
  Since $\int_0^t \abs{\dot{w}(s)}^2 ds$ is deterministically bounded, the proof is complete once we notice that
  $\expect[ A^{2p \lambda}(t) ] = 1$.
\end{proof}
\begin{proposition}
  Let $T^*$ be the time constructed in Lemma \ref{lem:exstence_of_T_star}.
  Then there exist constants $C \in (0, \infty)$ and $\theta \in (0, 1)$ such that
  \begin{equation}\label{eq:bound_on_P_t_x_minus_P_t_y}
    \abs{P_t \Phi(v_0) - P_t \Phi(v_1)} \leq \frac{C}{t^{\theta}} \norm{\Phi}_{L^{\infty}} \norm{v_0 - v_1}_{\contisp^{-\alpha_0}}
    + 2 \norm{\Phi}_{L^\infty} \P(t \geq \tau_1)
  \end{equation}
  for $v_0, v_1 \in \contisp^{-\alpha_0}$ with $\norm{v_0}_{\contisp^{-\alpha_0}},
  \norm{v_1}_{\contisp^{-\alpha_0}} \leq R$, $\Phi \in C^1_b(\contisp^{-\alpha_0})$ and $t \leq T^*$.
\end{proposition}
\begin{proof}
  We first note that the right hand side of \eqref{eq:bound_on_P_t_x_minus_P_t_y} is bounded by the sum of
  \begin{align*}
    I_1 &\defby \abs{ \expect[ \{\Phi(u(t;v_0)) - \Phi(u(t;v_1))\} \chi(\znorm{\underline{Z}}_t)]}, \\
    I_2 &\defby \abs{ \expect[ \{\Phi(u(t;v_0)) - \Phi(u(t;v_1))\} (1 - \chi(\znorm{\underline{Z}}_t))]}.
  \end{align*}
  Since $I_2 \leq 2 \norm{\Phi}_{L^{\infty}} \P(t \geq \tau_1)$, it suffices to show that
  \begin{equation*}
    I_1 \leq \frac{C}{t^{\theta}} \norm{\Phi}_{L^{\infty}} \norm{v_0-v_1}_{\contisp^{-\alpha_0}}.
  \end{equation*}

  The mean value theorem implies
  \begin{equation*}
    I_1 = \abs{ \expect[ \{\int_0^1 D\Phi(u(t;v_{\lambda})) \initdiff_{y-x} u(t; v_{\lambda}) d\lambda\} \chi(\znorm{
    \underline{Z}}_t) ] }
  \end{equation*}
  with $v_{\lambda} \defby (1-\lambda)v_0  + \lambda v_1$.
  Let $w$ be the element of $C([0, T]; \C^{d(n)})$ such that $w(0) = 0$ and
  \begin{equation*}
    \partial_s w^{\lambda}(s) = \initdiff_h u(s; v_{\lambda}) \indic_{\{s \leq \tau_2\}}
  \end{equation*}
  for given $h \in \contisp^{-\alpha_0}$.
  Then we have $\norm{\partial_s w^{\lambda}}_{L^2([0, t]; \C^{d(n)})} \leq C$
  for some deterministic constant $C \in (0, \infty)$ by Lemma \ref{lem:exstence_of_T_star}.

  Furthermore, we have $\noisediff_w u(t; v_{\lambda}) = t \initdiff_h u(t;v_{\lambda})$ if $t \leq \tau_2$.
  Indeed,
  letting $Y^{\lambda}$ be the solution of \eqref{eq:pde_of_approximate_Y} with the initial value $v_{\lambda}$, we observe that
  \begin{align*}
    \frac{\partial}{\partial t} (t \initdiff_h u(t;v_{\lambda}))
    &= J_{0, t}^{\lambda}h + t [ A \initdiff_h u(t;v_{\lambda}) + \Pi_n \Psi'(Y^{\lambda}(t), \underline{Z}(t)) J_{0,t}^{\lambda}h ] \\
    &= A(t \initdiff_h u(t;v_{\lambda})) + \Pi_n \Psi'(Y^{\lambda}(t), \underline{Z}(t))(t \initdiff_h u(t;v_{\lambda})) + \initdiff_h u(t;v_{\lambda}).
  \end{align*}
  Therefore, we obtain
  \begin{multline*}
    t \initdiff_h u(t;v_{\lambda}) \\= \int_0^t e^{(t-s)A} \Pi_n \Psi'(Y^{\lambda}(s), \underline{Z}(s))(s
    \initdiff_h u(s; v_{\lambda})) ds
    + \int_0^t e^{(t-s)A}  \partial_s w(s) ds.
  \end{multline*}
  In view of \eqref{eq:identity_of_noisediff}, both $\noisediff_w u(t; v_{\lambda})$ and $t \initdiff_h u(t; v_{\lambda})$
  satisfy the same integral equation.

  Theorem \ref{thm:bismut_elworthy_li_formula} now implies
  \begin{align*}
    &\expect[ D\Phi(u(t;v_{\lambda})) \initdiff_h u(t;v_{\lambda}) \chi(\znorm{\underline{Z}}_t) ]
    = \frac{1}{t} \expect[ D\Phi(u(t;v_{\lambda})) \noisediff_w u(t; v_{\lambda}) \chi(\znorm{\underline{Z}}_t) ]\\
    &= \frac{1}{t} \Big\{ - \expect[ \Phi(u(t;v_{\lambda})) \partial_+ \chi(\znorm{\underline{Z}}_t) (w) ] \\
    &\hspace{10em} + \expect[ \Phi(u(t;v_{\lambda})) \chi(\znorm{\underline{Z}}_t) \int_0^t
    \realinp{\initdiff_h u(s; v_{\lambda})}{dW(s)} ] \Big\}.
  \end{align*}
  We observe
  \begin{equation*}
    \abs{\partial_+ \chi(\znorm{\underline{Z}}_t)(w)} \\
    \lesssim \abs{\chi'(\znorm{\underline{Z}}_t)} \cdot \znorm{\underline{Z}}_t
    \sup_{0 \leq s \leq t} \norm{ \int_0^s e^{(s-r)A} \initdiff_{h} u(r; v_{\lambda})}_{\contisp^{\alpha_1} dr }
  \end{equation*}
  and, thanks to Lemma \ref{lem:exstence_of_T_star},
  \begin{equation*}
    \sup_{0 \leq s \leq t} \norm{ \int_0^s e^{(s-r)A} \initdiff_{h} u(r; v_{\lambda})}_{\contisp^{\alpha_1} dr}
    \lesssim t^{1 - \gamma} \norm{h}_{\contisp^{-\alpha_0}}.
  \end{equation*}
  Therefore, we obtain
  \begin{equation*}
    \abs{\expect[ \Phi(u(t;v_{\lambda})) \partial_+ \chi(\znorm{\underline{Z}}_t) (w) ]} \lesssim
    t^{1 - \gamma} \norm{\Phi}_{L^{\infty}} \norm{h}_{\contisp^{-\alpha_0}}.
  \end{equation*}

  On the other hand, by It\^o isometry and Lemma \ref{lem:exstence_of_T_star},
  \begin{align*}
    \expect[ \Big( \int_0^t \chi(\znorm{\underline{Z}}_t) \norm{\initdiff_h u(s; v_{\lambda})}_{\contisp^{-\alpha}}
    \Big)^2 ]
    &= \int_0^t \expect[ \chi(\znorm{\underline{Z}}_t)^2 \norm{\initdiff_h u(s; v_{\lambda})}_{L^2(\torus)}^2 ] ds \\
    &\lesssim t^{1- 2 \gamma} \norm{h}_{\contisp^{-\alpha_0}}^2.
  \end{align*}
  Therefore, we obtain
  \begin{equation*}
    \abs{\expect[ \Phi(u(t;v_{\lambda})) \chi(\znorm{\underline{Z}}_t)
    \int_0^t \realinp{\initdiff_h u(s; v_{\lambda})}{dW(s)} ]}
    \lesssim t^{\frac{1}{2} - \gamma} \norm{\Phi}_{L^{\infty}} \norm{h}_{\contisp^{-\alpha}}.
  \end{equation*}

  Consequently, substituting $h = v_1 - v_0$, we get
  \begin{equation*}
    I_1 \lesssim t^{-\frac{1}{2} - \gamma} \norm{\Phi}_{L^{\infty}} \norm{v_1 - v_0}_{\contisp^{-\alpha_0}} . \qedhere
  \end{equation*}

\end{proof}
\subsection{H\"older continuity in the total variation norm}\label{subsec:holder_continuity}
Here we go back to the original SPDE $\eqref{eq:CGL}$ instead of the approximation \eqref{eq:approximate_sdes}.
In the previous subsection, we obtained the estimate
\begin{equation*}
  \abs{P_t^{(n)} \Phi (v_0) - P_t^{(n)} \Phi (v_1)} \leq \frac{C}{t^{\theta}} \norm{\Phi}_{L^{\infty}} \norm{v_0 - v_1}_{\contisp^{-\alpha_0}}
  + 2 \norm{\Phi}_{L^{\infty}} \P(t \geq \tau^{(n)}_1)
\end{equation*}
for $t \leq T^*$ and $\norm{v_0}_{\contisp^{-\alpha_0}}, \norm{v_1}_{\contisp^{-\alpha_0}} \leq R$.
Since we have $\limsup_{n \to \infty} \P(t \geq \tau^{(n)}_1) \leq \P(t \geq \tau_1)$,
by taking the limit, we have
\begin{equation*}
  \abs{P_t \Phi (v_0) - P_t \Phi (v_1)} \leq \frac{C}{t^{\theta}} \norm{\Phi}_{L^{\infty}} \norm{v_0 - v_1}_{\contisp^{-\alpha_0}}
  + 2 \norm{\Phi}_{L^{\infty}} \P(t \geq \tau_1).
\end{equation*}
According to \cite[Lemma 7.1.5]{DPZ96}, this estimate is equivalent to
\begin{equation}\label{eq:total_variation}
  \tvnorm{P_t^* \delta_{v_0} - P_t^* \delta_{v_1}} \leq \frac{C}{2t^{\theta}} \norm{v_0 - v_1}_{\contisp^{-\alpha_0}} + \P(t \geq \tau_1).
\end{equation}
\begin{theorem}\label{thm:continuity_in_total_variation}
  Let $R, t_0 \in (0, \infty)$. Then
  there exist $\sigma \in (0, \infty)$ and $\tilde{\theta} \in (0, 1)$ such that
  \begin{equation*}
    \tvnorm{P_t^* \delta_{v_0} - P_t^*\delta_{v_1}} \lesssim (1 + t_0^{-1} + R)^{\sigma} \norm{v_0 - v_1}_{\contisp^{-\alpha_0}}^{\tilde{\theta}}
  \end{equation*}
  for every $v_0, v_1 \in \contisp^{-\alpha_0}$ with $\norm{v_0}_{\contisp^{-\alpha_0}},
  \norm{v_1}_{\contisp^{-\alpha_0}} \leq 1$ and $t \geq t_0$.
\end{theorem}
\begin{proof}
  We first observe $t \mapsto \tvnorm{P_t^* \delta_{v_0} - P_t^* \delta_{v_1}}$ is nonincreasing. Thus, we can assume
  $t_0 \leq 1$ and $t = t_0$.
  By  Proposition \ref{prop:L^p_estimate_of_nonstationary_OU} and Remark \ref{rem:continuity_of_nonstationary_OU}, we have
  \begin{equation*}
    \P( t \geq \tau_1) = \P(\znorm{\underline{Z}}_t \geq 1) \leq \expect[ \znorm{\underline{Z}}_t ] \lesssim t^{\theta_2}
  \end{equation*}
  for some $\theta_2 \in (0, 1)$.
  Therefore, combined with \eqref{eq:total_variation}, we have
  \begin{equation*}
    \tvnorm{P_t^* \delta_{v_0} - P_t^* \delta_{v_1}} \leq \inf_{s \leq \min\{t, T^*\}} f(s),
  \end{equation*}
  where $f(s) \defby \frac{C_1}{s^{\theta_1}} \norm{v_0 - v_1}_{\contisp^{-\alpha_0}} + C_2 t^{\theta_2}$.
  If we set
  \begin{equation*}
    s_0 \defby \left( \frac{C_1 \theta_1 \norm{v_0 - v_1}_{\contisp^{-\alpha}}}{C_2 \theta_2} \right)^{\frac{1}{\theta_1 + \theta_2}},
  \end{equation*}
  then we have $\inf_{s > 0} f(s) = f(s_0)$.
  If $s_0 \leq \min\{t, T^*\}$, we have
  \begin{equation*}
    \inf_{s \leq \min\{t, T^*\}} f(s) = f(s_0) = C \norm{v_0 - v_1}_{\contisp^{-\alpha_0}}^{\frac{\theta_2}{\theta_1 + \theta_2}}.
  \end{equation*}
  Otherwise, we have
  \begin{align*}
    \inf_{s \leq \min\{t, T^*\}} f(s)
     &= f(\min\{t, T^*\})\\ &= \frac{C_1}{(\min\{t, T^*\})^{\theta_1}} \norm{v_0-v_1}_{\contisp^{-\alpha_0}} + C_2 (\min\{t, T^*\})^{\theta_2} \\
    &\lesssim \Big( \frac{1}{t^{\theta_1}} + \frac{1}{(T^*)^{\theta_1}} \Big) \norm{v_0 - v_1}_{\contisp^{-\alpha_0}}
    + \norm{v_0 - v_1}_{\contisp^{-\alpha_0}}^{\frac{\theta_2}{\theta_1 + \theta_2}} \\
    &\leq  \Big\{ \frac{1}{t^{\theta_1}} + (1+R)^{\kappa \theta_1} \Big\} (2R)^{\frac{\theta_1}{\theta_1 + \theta_2}}
    \norm{v_0 - v_1}_{\contisp^{-\alpha_0}}^{\frac{\theta_2}{\theta_1 + \theta_2}}
    + \norm{v_0 - v_1}_{\contisp^{-\alpha_0}}^{\frac{\theta_2}{\theta_1 + \theta_2}}
  \end{align*}
  In the last inequality, we used the explicit representation of $T^*$ given in Lemma \ref{lem:exstence_of_T_star}.
\end{proof}
\begin{proof}[Proof of Theorem \ref{thm:strong_Feller}]
  Proposition \ref{prop:markov_process} shows that $\{P_t\}_{t \geq 0}$ is a Markov semigroup.
  Let $\Phi$ be a bounded measurable function on $\contisp^{-\alpha_0}$ and $t \in (0,\infty)$.
  Then, we have
  \begin{equation*}
    \abs{P_t \Phi(v_0) - P_t \Phi(v_1)} = \abs{ \inp{\Phi}{P_t^* \delta_{v_0} - P_t^* \delta_{v_1}} }
    \lesssim_{R, t} \norm{\Phi}_{L^{\infty}} \norm{v_0 - v_1}_{\contisp^{-\alpha_0}}^{\theta}
  \end{equation*}
  if $\norm{v_0}_{\contisp^{-\alpha_0}}, \norm{v_1}_{\contisp^{-\alpha_0}} \leq R$.
  This implies the continuity of the map $v \mapsto P_t \Phi(v)$.
\end{proof}

\begin{remark}
  As in \cite[Section 6]{TW18}, it is possible to prove exponential ergodicity of the SCGL.
  See \cite[Subsection 3.3]{Mat20}
\end{remark}

\appendix

\section{Besov spaces}\label{sec:besov}
Here we summarize basic facts about Besov spaces on $\torus = \R^2/\Z^2$.
We refer the reader to \cite{Bah11}, \cite{MW2dim} and \cite{Saw18} for more details.
We set $A \defby (i+\mu) \Delta - 1$ for $\mu > 0$.

\subsection{Some estimates in Besov spaces}
\begin{lemma}\label{lem:embedding_between_besov_spaces}
  We have the following embeddings.
  \begin{enumerate}[(i)]
    \item For $\alpha_1 \leq \alpha_2$, $\B^{\alpha_2}_{p, q} \hookrightarrow \B^{\alpha_1}_{p,q}$.
    \item For $q_1 \leq q_2$, $\B^{\alpha}_{p, q_1} \hookrightarrow \B^{\alpha}_{p, q_2}$.
    \item For $p_1 \leq p_2$, $\B^{\alpha}_{p_2, q} \hookrightarrow \B^{\alpha}_{p_1, q}$.
    \item For $\epsilon > 0$ and $q, q' \in [1, \infty]$,
    $\B^{\alpha}_{p, q} \hookrightarrow \B^{\alpha - \epsilon}_{p, q'}$.
    \item $\B^{0}_{p, 1} \hookrightarrow L^p(\torus)$.
    \item $L^p(\torus) \hookrightarrow \B_{p, \infty}^{0}$.
  \end{enumerate}
\end{lemma}
\begin{proof}
  The proof easily follows from the definition of Besov spaces.
\end{proof}
\begin{proposition}\label{prop:besov_embedding}\leavevmode
  \begin{enumerate}[(i)]
  \item Let $\alpha \in \R$, $p, r \in [1, \infty]$ with $p \geq r$, $\beta \defby \alpha + 2(\frac{1}{r} - \frac{1}{p})$.
  Then, there exists a constant $C \in (0, \infty)$ such that for $q \in [1, \infty]$, we have
  \begin{equation*}
    \norm{f}_{\B^{\alpha}_{p, q}} \leq C \norm{f}_{\B^{\beta}_{r, q}}.
  \end{equation*}
  \item Let $\alpha \in \R$, $k \in \N^2$, $p, q \in [1, \infty]$.
  Then, there exists a constant $C \in (0, \infty)$ such that
  \begin{equation*}
    \norm{\partial^k f}_{\B_{p,q}^{\alpha - \abs{k}}} \leq C \norm{ f}_{\B_{p,q}^{\alpha}}.
  \end{equation*}
  \end{enumerate}
\end{proposition}
\begin{proof}
  See \cite[\largeprop 2, 3]{MW2dim}.
\end{proof}
\begin{proposition}\label{prop:interpolation}
  Let $\alpha_0, \alpha_1 \in \R$, $p_0, q_0, p_1, q_1 \in [1, \infty]$, $\nu \in [0, 1]$,
  $\alpha \defby (1-\nu)\alpha_0 + \nu\alpha_1$ and $p, q \in [1, \infty]$ such that
  \begin{equation*}
    \frac{1}{p} = \frac{1-\nu}{p_0} + \frac{\nu}{p_1},
    \quad \frac{1}{q} = \frac{1-\nu}{q_0} + \frac{\nu}{q_1}.
  \end{equation*}
  Then, we have
  \begin{equation*}
    \norm{f}_{\B^{\alpha, M}_{p, q}} \leq \norm{f}_{\B^{\alpha_0, M}_{p_0, q_0}}^{1-\nu} \norm{f}_{\B^{\alpha_1, M}_{p_1, q_1}}^{\nu}.
  \end{equation*}
\end{proposition}
\begin{proof}
  See \cite[\largeprop 4]{MW2dim}.
\end{proof}
\begin{proposition}\label{prop:smoothing_in_besov}
  Let $\alpha \geq \beta$ and $p, q \in [1, \infty]$. Then, there exists a constant $C \in (0, \infty)$
  such that for $t > 0$,
  \begin{equation*}
    \norm{e^{tA} f}_{\B^{\alpha}_{p,q}} \leq C t^{\frac{\beta - \alpha}{2}} \norm{f}_{\B^{\beta}_{p, q}}.
  \end{equation*}
\end{proposition}
\begin{proof}
  See \cite[\largeprop 5]{MW2dim}.
\end{proof}
\begin{proposition}\label{prop:time_regularity_of_the_heat_flow_in_besov}
  Let $0 \leq \beta - \alpha \leq 2$ and $p, q \in [1, \infty]$.
  Then, there exists a constant $C \in (0, \infty)$ such that for $t \geq 0$,
  \begin{equation*}
    \norm{(1-e^{tA})f}_{\B^{\alpha}_{p, q}} \leq
    C t^{\frac{\beta - \alpha}{2}} \norm{f}_{\B^{\beta}_{p,q}}.
  \end{equation*}
\end{proposition}
\begin{proof}
  See \cite[\largeprop 6]{MW2dim}.
\end{proof}
\begin{lemma}\label{lem:series_criterion}
  Let $\alpha \in \R$, $p, q \in [1, \infty]$ and $\mathcal{C}$ be an annulus in $\R^2$.
  Then, there exists a constant $C \in (0, \infty)$ such that for a sequence $(f_k)$ of $L^p(\torus)$ functions satisfying
  \begin{equation*}
    \supp f_k \subset 2^k \mathcal{C} \quad \mbox{and} \quad
    (2^{\alpha k} \norm{f_k}_{L^p(\torus)})_k \in l^q,
  \end{equation*}
  we have $f = \sum_{k=0}^{\infty} f_k \in \B^{\alpha}_{p, q}$ and
  \begin{equation*}
    \norm{f}_{\B^{\alpha}_{p, q}} \leq C \norm{(2^{\alpha k} \norm{f_k}_{L^p(\torus)})_k}_{l^q}.
  \end{equation*}
\end{lemma}
\begin{proof}
  See \cite[\largelem 6]{MW2dim}.
\end{proof}

\subsection{Products}
For $f, g \in C^{\infty}(\torus)$ we define the paraproduct
\begin{equation*}
  f \paral g \defby g \parag f \defby \sum_{j < k-1} \delta_j f \delta_k g,
\end{equation*}
and the resonance term
\begin{equation*}
  f \resonant g \defby \sum_{\abs{j-k} \leq 1} \delta_j f \delta_k g.
\end{equation*}
We have the following Bony decomposition
\begin{equation}
  fg = f \paral g + f \resonant g + f \parag g.
\end{equation}
\begin{proposition}\label{prop:paraproduct_estimates}\leavevmode
  \begin{enumerate}[(i)]
    \item Let $\alpha, \alpha_1, \alpha_2 \in \R$ and $p, p_1, p_2, q \in [1, \infty]$ such that
    \begin{equation*}
      \alpha_1 \neq 0, \quad \alpha = \min\{\alpha_1, 0\}, \quad \frac{1}{p} = \frac{1}{p_1} + \frac{1}{p_2}.
    \end{equation*}
    Then, there exists a constant $C \in (0, \infty)$ such that
    \begin{equation*}
    \norm{f \paral g}_{\B^{\alpha}_{p, q}} \leq C \norm{f}_{\B^{\alpha_1}_{p_1, \infty}} \norm{g}_{\B^{\alpha_2}_{p_2, q}}.
    \end{equation*}
    \item Let $\alpha_1, \alpha_2 \in \R$ such that $\alpha \defby \alpha_1 + \alpha_2 > 0$ and $p, p_1, p_2, q$ as above.
    Then, there exists a constant $C \in (0, \infty)$ such that
    \begin{equation*}
      \norm{f \resonant g}_{\B^{\alpha}_{p, q}} \leq C \norm{f}_{\B^{\alpha_1}_{p_1, \infty}} \norm{g}_{\B^{\alpha_2}_{p_2, q}}.
    \end{equation*}
  \end{enumerate}
\end{proposition}
\begin{proof}
  See \cite[\largethm 3.1]{MW2dim}.
\end{proof}
\begin{corollary}\label{cor:multiplicative_inequality}\leavevmode
  \begin{enumerate}[(i)]
    \item Let $\alpha > 0$ and $p, p_1, p_2, q \in [1, \infty]$ such that $\frac{1}{p} = \frac{1}{p_1} + \frac{1}{p_2}$.
    Then, there exists a constant $C \in (0, \infty)$ such that
    \begin{equation*}
      \norm{fg}_{\B^{\alpha}_{p, q}} \leq C \norm{f}_{\B^{\alpha}_{p_1, q}} \norm{g}_{\B^{\alpha}_{p_2, q}}.
    \end{equation*}
    In particular, $fg$ is well-defined for $f \in \B^{\alpha}_{p_1, q}$ and $g \in \B^{\alpha}_{p_2, q}$.
    \item Let $\alpha < 0 < \beta $ such that $\alpha + \beta > 0$ and let $p, p_1, p_2, q \in [1, \infty]$ such that
    $\frac{1}{p} = \frac{1}{p_1} + \frac{1}{p_2}$.
    Then, there exists a constant $C \in (0, \infty)$ such that
    \begin{equation*}
    \norm{fg}_{\B^{\alpha}_{p, q}} \leq C \norm{f}_{\B^{\alpha}_{p_1, q}} \norm{g}_{\B^{\beta}_{p_2, q}}.
    \end{equation*}
    In particular, $fg$ is well-defined for $f \in \B^{\alpha}_{p_1, q}$ and $g \in \B^{\beta}_{p_2, q}$.
  \end{enumerate}
\end{corollary}
\begin{proof}
  See \cite[\largecor 1, 2]{MW2dim}.
\end{proof}

\begin{proposition}\label{prop:duality_in_besov}
  Let $\alpha \in (0, 1]$. Then, there exists a constant $C \in (0, \infty)$ such that
  for $p, q \in [1, \infty]$ and $f, g \in C^{\infty}(\torus)$,
  \begin{equation*}
    \abs{ \inp{f}{g}_1 } \leq C \norm{f}_{\B^{\alpha}_{p, q}} \norm{g}_{\B^{-\alpha}_{p', q'}},
  \end{equation*}
  where $p'$ and $q'$ are the conjugate indices of $p$ and $q$ respectively.
\end{proposition}
\begin{proof}
  See \cite[\largeprop 7]{MW2dim}.
\end{proof}

\subsection{Relations among Besov, H\"older and Sobolev spaces}
\begin{proposition}\label{prop:besov_vs_holder}
  Let $\alpha \in (0, 1)$. Then, there exists a constant $C \in (0, \infty)$ such that
  \begin{equation*}
    C^{-1} \norm{f}_{\B^{\alpha}_{\infty, \infty}} \leq
    \norm{f}_{L^{\infty}} + \sup_{x \neq y} \frac{\abs{f(x) - f(y)}}{\abs{x-y}^{\alpha}}
    \leq C \norm{f}_{\B^{\alpha}_{\infty, \infty}}.
  \end{equation*}
\end{proposition}
\begin{proof}
  See \cite[Theorem 2.7]{Saw18}.
\end{proof}
\begin{lemma}\label{lem:holder_fractional_power}
  Let $q \in (0, \infty)$ and $\epsilon \in (0, \min\{1, q\})$. Then, the map
  \begin{equation*}
    F: \B^{1}_{\infty, \infty} \ni f \mapsto F(f) \defby \abs{f}^q \in \B^{\min\{1, q\} - \epsilon}
  \end{equation*}
  is continuous.
\end{lemma}
\begin{proof}
  We set $\contisp^{\alpha} \defby \B^{\alpha}_{\infty, \infty}$.

  \emph{STEP 1. }We first consider the case $q \in [1, \infty)$.
  Set $\alpha \defby 1 - \epsilon$.
  We use the following elementary inequality
  \begin{equation}\label{eq:elementary_ineq_for_holder}
    \abs{a^q - b^q} \leq q \max\{a^{q-1}, b^{q-1}\} \abs{a-b} \quad \mbox{for } a, b \geq 0.
  \end{equation}
  Substituting $a = \abs{f(x)}$ and $b = \abs{f(y)}$, we obtain
  \begin{equation*}
    \frac{\abs{\abs{f(x)}^q - \abs{f(y)}^q}}{\abs{x-y}^{\alpha}}
    \leq q (\max\{\abs{f(x)}, \abs{f(y)}\})^{q-1} \frac{\abs{f(x) - f(y)}}{\abs{x-y}^{\alpha}}.
  \end{equation*}
  Thus, combined with Proposition \ref{prop:besov_vs_holder},
  we obtain $\norm{ \abs{f}^q }_{\contisp^{\alpha}} \lesssim \norm{f}_{\contisp^{\alpha}}^q$.
  In particular, we see $\abs{f}^q \in \contisp^{\alpha}$.

  To prove the continuity of $F$, take $f_1, f_2 \in \contisp^{1}$.
  For $\delta \in (0, 1)$ with $(1 + \delta) \alpha < 1$, we have
  \begin{align*}
    \MoveEqLeft
    \frac{\abs{(\abs{f_1}^q - \abs{f_2}^q)(x) - (\abs{f_1}^q - \abs{f_2}^q)(y)}}{\abs{x-y}^{\alpha}} \\
    &\lesssim \left( \frac{\abs{\abs{f_1(x)}^q - \abs{f_2(x)}^q} + \abs{\abs{f_1(y)}^q - \abs{f_2(y)}^q}}
    {\abs{x-y}^{\alpha(1+\delta)}} \right)^{\frac{1}{1+\delta}}
    \norm{\abs{f_1}^q - \abs{f_2}^q}_{L^{\infty}}^{\frac{\delta}{1+\delta}} \\
    &\lesssim ( \norm{f_1}_{\contisp^1}^q + \norm{f_2}_{\contisp^1}^q)^{\frac{1}{1+\delta}}
    (\norm{f_1}_{L^{\infty}} + \norm{f_2}_{L^{\infty}})^{q-1} \norm{f_1 - f_2}_{L^{\infty}}^{\frac{\delta}{1+\delta}}.
  \end{align*}
  This implies the continuity of $F$.

  \emph{STEP 2. } We next consider the case $q \in (0, 1)$. Instead of \eqref{eq:elementary_ineq_for_holder}, we
  use the inequality $\abs{a^q - b^q} \leq \abs{a-b}^q$. This implies
  \begin{equation*}
    \frac{\abs{\abs{f(x)}^q - \abs{f(y)}^q}}{\abs{x-y}^q} \leq
    \left( \frac{\abs{f(x) - f(y)}}{\abs{x-y}}\right)^q \lesssim \norm{Y}^q_{\contisp^1}.
  \end{equation*}
  The remaining is similar to \emph{STEP 1}.
\end{proof}
\begin{proposition}\label{prop:besov_vs_sobolev}
  Let $\alpha \in (0, 1)$. Then, there exists a constant $C \in (0, \infty)$ such that
  \begin{equation*}
    C^{-1} \norm{f}_{\B^{\alpha}_{1, 1}} \leq \norm{f}_{L^1(\torus)}^{1- \alpha} \norm{\nabla f}_{L^1(\torus)}^{\alpha}
    + \norm{f}_{L^1(\torus)}.
  \end{equation*}
\end{proposition}
\begin{proof}
  See \cite[\largeprop 8]{MW2dim}.
\end{proof}

\subsection{Kolmogorov continuity theorem for Besov spaces}
We set
  $\eta_k^{(1)} (x) \defby \sum_{y \in \Z^2} \eta_k(x+y)$.
Note that $\eta_k^{(1)} \in C^{\infty}(\torus)$.
\begin{lemma}\label{lem:modification_of_distribution}
  Suppose a map $Z: \R \times L^2(\torus) \ni (t, \phi) \mapsto Z(t, \phi) \in L^2(\Omega, \F, \P)$
  is  continuous and linear with respect to $\phi$.
  We further suppose that there exist $p \in (1, \infty)$, $\alpha \in \R$ and $\kappa > \frac{1}{p}$ such that
  for all $T \in (0, \infty)$ we can find $K_T \in L^{\infty}$ such that for $k \geq -1$, $x \in \R^2$ and $s, t \in [-T, T]$,
  \begin{equation}\label{eq:assumption_on_bound_of_Z_eta_k}
    \expect[ \abs{Z(t, \eta_k^{(1)} (x - \cdot))}^p ]
    \leq K_T(x)^p 2^{-k \alpha p},
  \end{equation}
  \begin{equation}\label{eq:assumption_on_difference_of_Z_eta_k}
    \expect[ \abs{Z(t, \eta_k^{(1)}(x - \cdot)) - Z(s, \eta_k^{(1)}(x - \cdot))}^p] \leq
    K_T(x)^p 2^{-k\alpha p} \abs{t-s}^{\kappa p}.
  \end{equation}
  Then, for $\alpha' < \alpha$, there exists a random distribution $\tilde{Z} \in C(\R; \B^{\alpha'}_{p, p})$
  such that for $t \in \R$ and $\phi \in C^{\infty}(\torus)$ we have
  \begin{equation*}
    Z(t, \phi) = \inp{\tilde{Z}(t)}{\phi}_{1} \quad \mbox{almost surely.}
  \end{equation*}
  Moreover, there exists $\epsilon \in (0, 1)$ such that for $T \in (0, \infty)$,
  \begin{equation*}
    \expect [\sup_{-T \leq s < t \leq T} (t-s)^{-\epsilon p}
    \norm{\tilde{Z}(t) - \tilde{Z}(s)}^p_{\B^{\alpha'}_{p, p}}] \lesssim_{T, \alpha, \alpha', p}
    \int_{\torus} K_T(x)^p \, dx .
  \end{equation*}
\end{lemma}
\begin{proof}
  The proof for weighted Besov spaces is given in \cite[\largelem 9]{MW2dim}.
  Although the proof for periodic Besov spaces is essentially the same, we provide a complete proof below
  as this lemma is of great importance.
  Take $\epsilon_k \defby 1/n_k$ for sufficiently large $n_k \in \N$.
  For $k \geq 0$, let $\tilde{\chi}_k$ be a real valued, radial smooth function such that
  $\tilde{\chi}_k \equiv 1$ on the annulus $B(0, 2^k \frac{8}{3}) \setminus B(0, 2^k\frac{3}{4})$
  and $\tilde{\chi}_k \equiv 0$ outside the annulus $B(0, 2^k \frac{16}{3}) \setminus B(0, 2^k \frac{3}{8})$.
  For $k = -1$ we let $\tilde{\chi}_{-1} \equiv 1$ on $B(0, \frac{4}{3})$ and $\tilde{\chi}_{-1} \equiv 0$ outside $B(0, \frac{8}{3})$. We set $\tilde{\eta}_k \defby \F^{-1} \tilde{\chi}_k$.

  Set
  \begin{equation*}
    \hat{Z}_k(t, y) \defby Z(t, \eta_k^{(1)} (y - \cdot)) \qquad (y \in \epsilon_k \Z^2)
  \end{equation*}
  and
  \begin{equation}\label{eq:def_of_Z_k}
    Z_k(t, x) \defby \sum_{y \in \epsilon_k \Z^2} \epsilon_k^2 \hat{Z}_k(t, y) \tilde{\eta}_k (x-y).
  \end{equation}
  Note that $Z_k(t, \cdot)$ is $1$-periodic.
  By \eqref{eq:assumption_on_bound_of_Z_eta_k} we see that with probability $1$
  the sum in \eqref{eq:def_of_Z_k} absolutely converges uniformly for $x$,
  and therefore we have $Z(t, \cdot) \in C^{\infty}(\torus)$.
  Furthermore, for $\phi \in \S$, we have
  \begin{align*}
    \inp{\F(Z_k(t, \cdot))}{\phi} &= \inp{Z_k(t, \cdot)}{\F \phi} \\
    &= \sum_{y \in \epsilon_k \Z^2} \epsilon_k^2 \hat{Z}_k(t, y) \inp{\tilde{\eta}_k(\cdot - y)}{\F \phi}.
  \end{align*}
  Thus,
  \begin{align*}
    \supp \{\F Z_k(t, \cdot)\} &\subset B(0, 2^{k+4} 3^{-1}) \setminus B(0, 2^{k+3} 3^{-1}),\\
    \supp \{\F Z_{-1}(t, \cdot)\} &\subset B(0, 8\cdot3^{-1}).
  \end{align*}
  Now we set for $\phi \in C^{\infty}(\torus)$
  \begin{equation*}
    \tilde{Z}(t, \phi) \defby \inp{\tilde{Z}(t)}{\phi} \defby \sum_{k \geq -1} \inp{Z_k(t)}{\phi},
  \end{equation*}
  which is well-defined at least for $\phi$ with $\F \phi \in C^{\infty}_c(\R^2)$.
  We compute
  \begin{align*}
    \MoveEqLeft
    \sum_{y \in \epsilon_k \Z^2} \epsilon_k^2 \eta_k^{(1)}(y-z) \int_{\torus} \tilde{\eta}_k(x-y)\phi(x) \, dx \\
    &= \sum_{\substack{y \in \epsilon_k \Z^2, \\ w \in 1 \Z^2}} \epsilon_k^2 \eta_k(w+y-z)
    \int_{\torus} \tilde{\eta}_k(x-y) \phi(x) \, dx \\
    &= \sum_{\substack{y \in \epsilon_k \Z^2, \\ w \in  \Z^2}} \epsilon_k^2 \eta(y-z)
     \int_{\torus} \tilde{\eta}_k(x-y+w) \phi(x) \, dx \\
    &= \sum_{y \in \epsilon_k \Z^2} \epsilon_k^2 \eta_k(y-z) \int_{\R^2} \tilde{\eta}_k(x-y) \phi(x) \, dx \\
    &=: A.
  \end{align*}
  Take $\phi_{\lambda} \in C^{\infty}_c(\R^2)$ such that $\phi_{\lambda} \equiv \phi$ on $B(0, \lambda)$,
  $\norm{\phi_{\lambda}}_{L^{\infty}} \leq \norm{\phi}_{L^{\infty}}$ and  $\phi_{\lambda} \to \phi$ as $\lambda \to \infty$
  pointwise.
  Then by Lebesgue's dominated convergence theorem,
  \begin{equation*}
    A = \lim_{\lambda \to \infty} \sum_{y \in \epsilon_k \Z^2} \epsilon_k^2 \eta_k(y-z)
    \int_{\R^2} \tilde{\eta}_k(x-y) \phi_{\lambda}(x) \, dx.
  \end{equation*}

  We claim the identity
  \begin{equation}\label{eq:identity_of_chi_and_phi_lambda}
    \sum_{y \in \epsilon_k \Z^2} \epsilon_k^2 \eta_k(y-z) \int_{\R^2} \tilde{\eta}_k(x-y) \phi_{\lambda}(x) \, dx
    = \F^{-1}(\chi_k) * \phi_{\lambda}(z).
  \end{equation}
  Indeed, by taking Fourier transforms, it suffices to show
  \begin{equation*}
    \sum_{y \in \epsilon_k \Z^2} \epsilon_k^2 e^{-2 \pi i \xi \cdot y} \chi_k(\xi) \tilde{\eta}_k * \phi_{\lambda}(y)
    = \chi_k(\xi) \F\phi_{\lambda}(\xi) \quad \mbox{in } \,\,L^2(\R^2, d\xi).
  \end{equation*}
  As $\chi_k$ has compact support, it suffices to show for fixed $\xi \in \supp \chi_k$,
  \begin{equation}\label{eq:identity_of_eta_and_phi}
  \sum_{y \in \epsilon_k \Z^2} \epsilon_k^2 \tilde{\eta}_k * \phi_{\lambda}(y) e^{-2 \pi i \xi \cdot y} = \F \phi_{\lambda}(\xi).
  \end{equation}
  Recalling the Poisson summation formula, we calculate
  \begin{align*}
    \sum_{y \in \epsilon_k \Z^2} \epsilon_k^2 \tilde{\eta}_k * \phi_{\lambda}(y) e^{-2\pi i \xi \cdot y}
    &= \sum_{y \in \Z^2} \epsilon_k^2 \tilde{\eta}_k * \phi_{\lambda}(\epsilon_k y) e^{-2\pi i \epsilon_k \xi \cdot y} \\
    &= \sum_{y \in \Z^2} \epsilon_k^2 \F^{-1}[\tilde{\eta}_k * \phi_{\lambda} (\epsilon_k \cdot)] (-\epsilon_k \xi + y) \\
    &= \sum_{y \in \Z^2} \tilde{\chi}_k(\xi - \frac{y}{\epsilon_k}) \F \phi_{\lambda}(\xi - \frac{y}{\epsilon_k}).
  \end{align*}
  For sufficiently small $\epsilon_k$, $\tilde{\chi}(\xi - y/\epsilon_k) = 0$ whenever $y \in \Z^2 \setminus \{0\}$.
  Hence we proved \eqref{eq:identity_of_eta_and_phi}.

  Using \eqref{eq:identity_of_chi_and_phi_lambda},
  \begin{equation*}
    A = \lim_{\lambda \to \infty} \F^{-1}(\chi_k) * \phi_{\lambda}(z) = \F^{-1}(\chi_k) * \phi(z).
  \end{equation*}
  Therefore, by continuity of $Z(t, \cdot)$,
  \begin{equation*}
    \inp{\tilde{Z}(t)}{\phi} = \sum_{k \geq -1} Z(t, \F^{-1}(\chi_k)*\phi) = Z(t, \phi).
  \end{equation*}

  We still need to show that $\tilde{Z}$ can be realized as a distribution.
  The assumption \eqref{eq:assumption_on_bound_of_Z_eta_k} leads to
  \begin{align*}
    \expect [\norm{Z_k(t, \cdot)}_{L^p(\torus)}^p] &= \int_{\torus} \expect [\abs{Z_k(t, \eta_k(x-\cdot))}^p] \, dx \\
    &\leq 2^{-k\alpha p} \int_{\torus} K_T(x)^p \, dx .
  \end{align*}
  Thus for $\alpha' < \alpha$,
  \begin{align*}
    \expect [\sum_{k \geq -1} 2^{k \alpha' p} \norm{Z_k(t, \cdot)}_{L^p(\torus)}^p] &\leq
    \sum_{k \geq -1} 2^{-k(\alpha - \alpha') p} \int_{\torus}  K_T(x)^p \, dx \\
    &= C(\alpha, \alpha', p) \int_{\torus} K_T(x)^p \, dx < \infty.
  \end{align*}
  Therefore, we have $\sum 2^{k \alpha' p} \norm{Z_k(t, \cdot)}^p_{L^p(\torus)} < \infty$ almost surely and
  by Lemma \ref{lem:series_criterion} $\tilde{Z}(t) \defby \sum_{k \geq -1} Z_k(t, \cdot)$ is
  $\B^{\alpha'}_{p, p}$-valued.

  Similarly, by \eqref{eq:assumption_on_difference_of_Z_eta_k},
  \begin{equation*}
    \expect[ \norm{\tilde{Z}(t) - \tilde{Z}(s)}_{\B^{\alpha'}_{p, p} }^p ]
    \lesssim_{\alpha, \alpha', p, T} \abs{t-s}^{\kappa p} \int_{\torus} K_T(x)^p \, dx.
  \end{equation*}
  It remains to apply the usual Kolmogorov criterion.
\end{proof}

\section{Complex multiple It\^o-Wiener Integrals}\label{sec:complex_ito_integrals}
We recall basic properties of complex multiple It\^o-Wiener integrals.
See \cite{Ito52} for more details.

A complex random variable $Z$ is called isotropic complex normal
if $\Re Z$ and $\Im Z$ are independent, identically distributed and
$(\Re Z, \Im Z)$ is jointly normal with mean $0$.
A family of complex random variables $\{Z_{\lambda}\}$ is called jointly isotropic complex normal
if $\sum_{i=1}^n c_i Z_{\lambda_i}$ is isotropic complex normal for any $n$ and $c_1, \ldots, c_n \in \C$.
The distribution of jointly isotropic complex normal system $\{Z_{\lambda}\}$ is
uniquely determined by the positive-definite matrix
$\{\expect[ Z_{\lambda} \conj{Z_{\mu}} ] \}_{\lambda, \mu}$(\cite[Theorem 2.3]{Ito52}).

Let $(E, \mathcal{E}, m)$ be a $\sigma$-finite, atomless measure space and
$\mathcal{E}^*$ be the set of all $A \in \mathcal{E}$ such that $m(A) < \infty$.
Then there exists a jointly isotropic complex normal system
$\set{M(A) \given A \in \mathcal{E}^*}$ such that
\begin{equation*}
  \expect[ M(A) \conj{M(B)} ] = m(A \cap B),
\end{equation*}
see \cite[Theorem 3.1]{Ito52}.

Now we define the complex multiple It\^o-Wiener integral of $f \in L^2_{k,l}
\defby L^2 (E^k \times E^l)$ for $k, l \in \N$. First assume that
\begin{equation}\label{eq:simple_f_for_multiple_integral}
  f = \sum_{i_1, \ldots, i_{k+l}=1}^n a_{i_1 \ldots j_{k+l}}
  \indic_{E_{i_1} \times \cdots \times E_{i_{k+l}}},
\end{equation}
where $E_1, \ldots, E_n$ are disjoint sets of $\mathcal{E}^*$ and
$ a_{i_1 \ldots i_{k+l}} $ is a complex number which equals $0$ unless
$i_1, \ldots, i_{k+l}$ are all different.
Then we define
\begin{equation*}
  \mathcal{J}_{k,l}(f) \defby
  \sum_{i_1, \ldots, i_{k+l} =1}^n a_{i_1\ldots i_{k+l}} M(E_{i_1}) \cdots M(E_{i_k})
  \conj{M(E_{i_{k+1}})} \cdots \conj{M(E_{i_{k+l}})}.
\end{equation*}
We have
\begin{align*}
  &\expect\left[ \abs*{ \sum a_{i_1 \ldots i_{k+l}} M(E_{i_1}) \cdots M(E_{i_k}) \conj{M(E_{i_{k+1}})} \cdots \conj{M(E_{i_{k+l}})}}^2 \right] \\
  &= \sum a_{i_1 \ldots i_{k+l}} \conj{a_{j_1 \ldots j_{k+l}}} \expect[ M(E_{i_1}) \cdots \conj{M(E_{k+l})}
  \,\,\conj{M(E_{j_1})} \cdots M(E_{j_{k+l}}) ] \\
  &= \sum_{\substack{\set{i_1, \ldots, i_k} = \set{j_1, \ldots, j_k} \\ \set{i_{k+1}, \ldots, i_{k+l}} = \set{j_{k+1}, \ldots, j_{k+l}}}}
  a_{i_1 \ldots a_{i_{k+l}}} \conj{a_{j_1 \ldots j_{k+l}}} \expect[\abs{M(E_{i_1})}^2] \cdots \expect[ \abs{M(E_{i_{k+l}})}^2] \\
  &= \sum_{\substack{\set{i_1, \ldots, i_k} = \set{j_1, \ldots, j_k} \\ \set{i_{k+1}, \ldots, i_{k+l}} = \set{j_{k+1}, \ldots, j_{k+l}}}}
  \hspace{-1em} a_{i_1 \ldots i_{k+l}} [m(E_{i_1}) \cdots m(E_{i_{k+l}})]^{\frac{1}{2}} \conj{a_{j_1 \ldots j_{k+l}}}
  [m(E_{j_1}) \cdots m(E_{j_{k+l}})]^{\frac{1}{2}} \\
  &\leq k! l! \norm{f}_{L^2_{k,l}}^2,
\end{align*}
where in the last line we used Cauchy-Schwarz inequality.
For general $f \in L^2_{k, l}$, we can find a sequence of $\{f_n\}$ of the form \eqref{eq:simple_f_for_multiple_integral}
such that $f_n \to f$ in $L^2_{k,l}$.
We define $\mathcal{J}_{k, l}(f) \defby \lim_{n \to \infty} \mathcal{J}_{k,l}(f_n)$, where the limit is in $L^2(\P)$.
Well-definedness is guaranteed by the above inequality.

As with real multiple It\^o-Wiener integrals, we have the following $L^p$-estimates.
\begin{proposition}\label{prop:Nelson_estimate}
  For $f \in L^2_{k,l}$ and $p \in [2, \infty)$, we have
  \begin{equation*}
    \expect[ \abs{\mathcal{J}_{k,l}(f)}^p]^{\frac{1}{p}} \lesssim_{k,l} \norm{f}_{L^2_{k,l}}.
  \end{equation*}
\end{proposition}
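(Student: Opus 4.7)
My plan is to reduce the complex Wiener chaos to a real Wiener chaos and then invoke Nelson's hypercontractivity, which asserts that on a fixed real Wiener chaos of order $n$ all $L^p$-norms ($p \geq 2$) are equivalent, with constant $(p-1)^{n/2}$. The $L^2$ bound
\[
  \expect[\abs{\mathcal{J}_{k,l}(f)}^2] \leq k!\,l!\,\norm{f}_{L^2_{k,l}}^2
\]
has already been established in the excerpt, so once the reduction is done the proposition is immediate.

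\textbf{Step 1: Real decomposition.} Write $M(A) = X(A) + i\,Y(A)$. Isotropy together with $\expect[M(A)\conj{M(B)}] = m(A\cap B)$ forces $X, Y$ to be independent real Gaussian random measures on $(E,\mathcal{E},m)$ with $\expect[X(A)X(B)] = \expect[Y(A)Y(B)] = \tfrac{1}{2}m(A\cap B)$. Assemble them into a single real Gaussian random measure $W$ on the disjoint union $\tilde{E} = E \sqcup E$ of variance $m$, by setting $W$ on the two copies to be $\sqrt{2}X$ and $\sqrt{2}Y$ respectively.

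\textbf{Step 2: Reduction to a real multiple integral.} For simple $f$ as in \eqref{eq:simple_f_for_multiple_integral}, substitute $M(E_{i_r}) = X(E_{i_r}) + iY(E_{i_r})$ and $\conj{M(E_{i_r})} = X(E_{i_r}) - iY(E_{i_r})$ into the definition of $\mathcal{J}_{k,l}(f)$ and expand. Because $i_1,\ldots,i_{k+l}$ are pairwise distinct, each resulting monomial in $X, Y$ is a product of factors indexed by disjoint subsets of $\tilde E$, hence coincides (up to a factor $2^{-(k+l)/2}$) with an elementary real multiple Wiener--It\^o integral $I^W_{k+l}(\tilde f)$ of an explicit symmetric kernel $\tilde f$ on $\tilde E^{k+l}$; a direct count of the expansion gives $\norm{\tilde f}_{L^2(\tilde E^{k+l})} \lesssim_{k,l} \norm{f}_{L^2_{k,l}}$. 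By $L^2$-continuity of both $\mathcal{J}_{k,l}$ and $I^W_{k+l}$, this representation extends by density to arbitrary $f \in L^2_{k,l}$. Consequently $\Re \mathcal{J}_{k,l}(f)$ and $\Im \mathcal{J}_{k,l}(f)$ both lie in the $(k+l)$-th real Wiener chaos generated by $W$.

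\textbf{Step 3: Apply hypercontractivity and conclude.} On that real chaos Nelson's estimate yields $\norm{F}_p \leq (p-1)^{(k+l)/2}\norm{F}_2$ for all $p\geq 2$, so
\[
  \expect[\abs{\mathcal{J}_{k,l}(f)}^p]^{1/p} \leq \norm{\Re \mathcal{J}_{k,l}(f)}_p + \norm{\Im \mathcal{J}_{k,l}(f)}_p \lesssim_{p,k+l} \norm{\mathcal{J}_{k,l}(f)}_2 \lesssim_{k,l} \norm{f}_{L^2_{k,l}}.
\]
The only nontrivial part is the bookkeeping in Step 2, namely producing the kernel $\tilde f$ on the doubled space and tracking the factors of $i$ and $\sqrt 2$; alternatively, one can avoid this by proving the $p = 2m$ case by expanding $\expect[\abs{\mathcal{J}_{k,l}(f)}^{2m}]$ via the complex Wick formula (pairings between $M$- and $\conj{M}$-factors), bounding the number of pairings by $((k+l)m)!$, and interpolating between even integer exponents for general $p \geq 2$, but the hypercontractivity route is cleaner.
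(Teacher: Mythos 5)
The paper states this proposition without proof, appealing only to the analogy with real multiple It\^o--Wiener integrals ("As with real multiple It\^o-Wiener integrals\dots"), so there is no in-paper argument to compare against; your proposal supplies exactly the justification the authors evidently have in mind, and it is correct. The route you take---write $M = X + iY$, observe that $X$ and $Y$ are independent real Gaussian random measures of variance $\tfrac12 m$, assemble them into a single real Gaussian measure $W$ on the doubled space $\tilde E = E \sqcup E$, expand $\mathcal{J}_{k,l}(f)$ for off-diagonal simple $f$ into a real $(k+l)$-fold Wiener--It\^o integral with respect to $W$, and then invoke Nelson's hypercontractivity on the $(k+l)$-th real chaos---is the standard and natural derivation of the complex Nelson estimate. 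One point you leave implicit is why $X \perp Y$: this needs the vanishing of the pseudocovariance $\expect[M(A)M(B)]$, which does follow from joint isotropy (since $M(A)+M(B)$ is itself isotropic, $\expect[(M(A)+M(B))^2]=0$), and then the four real equations from $\expect[M(A)\conj{M(B)}]=m(A\cap B)$ and $\expect[M(A)M(B)]=0$ force $\expect[X(A)Y(B)]=0$; it is worth a sentence. The bookkeeping in Step 2 is left as an outline, but the key structural observation---that distinctness of the $i_r$'s makes every monomial in the $X,Y$-expansion a product over pairwise disjoint subsets of $\tilde E$, hence an elementary off-diagonal $(k+l)$-fold integral---is exactly right, and the $L^2$-continuity argument for passing from simple to general $f$ is sound. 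Your alternative sketch via the complex Wick/pairing formula plus interpolation would also work, but as you say the hypercontractivity reduction is cleaner and matches the proposition's label.
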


Finally, we review the product formula and complex Hermite polynomials.
Let $k_1, l_1, k_2, l_2, r_1, r_2 \in \N$.
Let $G_1$ be the set of sets $\{(i_1, j_1), \ldots, (i_{r_1}, j_{r_1}) \}$ such that
$i_1, \ldots, i_{r_1} \in \{ 1, \ldots, k_1\}$ are distinct and
$j_1, \ldots, j_{r_1} \in \{ l_1 + 1, \ldots, l_1 + l_2 \}$ are distinct.
Let $G_1$ be the set of sets $\{(i_1, j_1), \ldots, (i_{r_2}, j_{r_2}) \}$ such that
$i_1, \ldots, i_{r_1} \in \{ k_1+1, \ldots, k_1+k_2\}$ are distinct and
$j_1, \ldots, j_{r_1} \in \{ 1, \ldots, l_1 \}$ are distinct.
Set $\mathcal{G}(k_1, l_1; k_2, l_2; r_1, r_2) \defby G_1 \cup G_2$.
Note that $\mathcal{G}(k_1, l_1; k_2, l_2; r_1, r_2) = \emptyset$ unless
$r_1 \leq \min\{ k_1, l_2 \}$ and $r_2 \leq \min \{ k_2, l_1 \}$.
For $f \in L^2_{k_1, l_1}$, $g \in L^2_{k_2, l_2}$ and $\gamma \in \mathcal{G}(k_1, l_1; k_2, l_2; r_1,$ $r_2)$,
we define $f \otimes_{\gamma} g$ by
\begin{multline*}
  f \otimes_{\gamma} g (t_1, \ldots, t_{k_1+k_2}, s_1, \ldots, s_{l_1+l_2}
  \setminus \{ (t_i, s_j)_{(i,j) \in \gamma} \}) \\
  \defby \int_{E^{r_1 + r_2}} h(\{(t_i, s_j)\}_{(i,j)\in \gamma})
  \prod_{(i, j) \in \gamma} dm(t_i, s_j),
\end{multline*}
where $h: E^{r_1 + r_2} \to \C$ is defined by
\begin{multline*}
  h(\{(t_i, s_j)\}_{(i,j)\in \gamma}) \defby
  f(t_1, \ldots, t_{k_1}, s_1, \ldots, s_{l_1}) \\
  \times g(t_{k_1+1}, \ldots, t_{k_1+k_2}, s_{l_1+1}, \ldots, s_{l_1 + l_2})
  \vert_{t_i = s_j \mbox{\small{ if }} (i,j) \in \gamma}.
\end{multline*}

\begin{proposition}\label{prop:product_formula}
  For $f \in L^2_{k_1, l_1}$ and $g \in L^2_{k_2, l_2}$, we have
  \begin{equation*}
    \mathcal{J}_{k_1, l_1}(f) \mathcal{J}_{k_2, l_2}(g)
    = \sum_{r_1, r_2 \in \N} \sum_{\gamma \in \mathcal{G}(k_1, l_1; k_2, l_2; r_1, r_2)}
    \mathcal{J}_{k_1+k_2-(r_1+r_2), l_1+l_2-(r_1+r_2)}(f \otimes_{\gamma} g).
  \end{equation*}
\end{proposition}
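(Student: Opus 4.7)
The plan is to reduce by density and multilinearity to the case of simple product indicators on a common disjoint partition, expand the resulting product of $M$'s and $\conj M$'s explicitly, and identify the result with the right-hand side via Wick-type contractions governed by isotropy.

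First I would verify that both sides depend continuously on $(f, g) \in L^2_{k_1, l_1} \times L^2_{k_2, l_2}$: the left-hand side by Cauchy-Schwarz combined with the $L^4$-bound of Proposition~\ref{prop:Nelson_estimate}, and the right-hand side because $\norm{f \otimes_\gamma g}_{L^2}$ is controlled by $\norm{f}_{L^2_{k_1, l_1}} \norm{g}_{L^2_{k_2, l_2}}$ via Cauchy-Schwarz in the contracted variables followed by the $L^2$-isometry for $\mathcal{J}$. Thus it suffices to prove the formula for simple $f, g$ of the form \eqref{eq:simple_f_for_multiple_integral} built from a common refined partition $E_1, \ldots, E_N$; by multilinearity I may further assume $f = \indic_{E_{\alpha_1} \times \cdots \times E_{\alpha_{k_1+l_1}}}$ and $g = \indic_{E_{\beta_1} \times \cdots \times E_{\beta_{k_2+l_2}}}$ with pairwise distinct $\alpha$'s and pairwise distinct $\beta$'s.

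For such monomials, $\mathcal{J}_{k_1, l_1}(f) \mathcal{J}_{k_2, l_2}(g)$ is literally the product $\prod_{u \leq k_1} M(E_{\alpha_u}) \prod_{u > k_1} \conj{M(E_{\alpha_u})} \prod_{v \leq k_2} M(E_{\beta_v}) \prod_{v > k_2} \conj{M(E_{\beta_v})}$. I would group factors by underlying set: nontrivial contractions arise only when some $E_p$ appears both among the $\alpha$'s and among the $\beta$'s, and because $\{M(A)\}$ is jointly isotropic complex normal one has $\expect[M(A) M(B)] = \expect[\conj{M(A)}\conj{M(B)}] = 0$, so only $(M, \conj M)$ pairs contract. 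For each such overlap on a common $E_p$, I will use the identity $M(E_p)\conj{M(E_p)} = m(E_p) + \mathcal{J}_{1,1}(\indic_{E_p \times E_p})$, where the second term is defined by further refinement of $E_p$ and passage to an $L^2(\P)$ limit, justified because $m$ is atomless so the diagonal of $E_p \times E_p$ has $m^{\otimes 2}$-measure zero. The two flavors of contractible overlaps (unbarred-from-$f$ with barred-from-$g$ versus unbarred-from-$g$ with barred-from-$f$) then match exactly the sets $G_1$ and $G_2$ defining $\mathcal{G}$.

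Distributing the product across the contract/don't-contract choice for every potential overlap will yield a sum indexed by a set $\gamma$ of contracted pairs. The contracted pairs contribute a product of $m(E_p)$ factors, which is precisely the integral over paired variables in the definition of $f \otimes_\gamma g$, while the uncontracted factors reassemble into $\mathcal{J}_{k_1 + k_2 - r_1 - r_2,\,l_1 + l_2 - r_1 - r_2}(f \otimes_\gamma g)$; each $\gamma \in \mathcal{G}$ appears once with unit coefficient, reproducing the right-hand side. The combinatorics is mechanical, and the hard part is the diagonal-extension identity of the previous paragraph: since the simple-function definition of $\mathcal{J}$ excludes diagonal indicators, one must verify by refinement-and-limit that $\mathcal{J}_{1,1}(\indic_{E_p \times E_p})$ really equals $M(E_p)\conj{M(E_p)} - m(E_p)$ in $L^2(\P)$, a step that uses isotropy crucially (it kills $\expect[M(E_p)^2]$, which in the real case would produce an extra contraction term).
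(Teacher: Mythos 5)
Your proof is correct, and it takes a genuinely different route from the paper. The paper's proof simply cites It\^o's Theorem~9 for the base case $(k_2,l_2)\in\{(1,0),(0,1)\}$ and asserts the general case by induction on $k_2+l_2$; it re-derives nothing. Your argument is a self-contained direct proof: reduce by density to simple tensors on a common disjoint partition, expand the product of $M$'s and $\conj M$'s, and handle repeated sets via the refinement identities $M(E_p)\conj{M(E_p)}=m(E_p)+\mathcal{J}_{1,1}(\indic_{E_p\times E_p})$ (and, implicitly, $M(E_p)^2=\mathcal{J}_{2,0}(\indic_{E_p\times E_p})$, $\conj{M(E_p)}^2=\mathcal{J}_{0,2}(\indic_{E_p\times E_p})$, which you should state explicitly since a set may repeat unbarred/unbarred or barred/barred across $f$ and $g$). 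The paper's induction is shorter but opaque and leans entirely on the reference; your Wick-contraction bookkeeping is longer but exposes why only $(M,\conj M)$ pairs contribute a Stratonovich-type correction, namely the isotropy identity $\expect[M(A)M(B)]=0$, which is exactly the structural difference from the real product formula. One step you wave at but should spell out is the reassembly: after distributing the contract/do-not-contract choices you are left with a product, over disjoint sets $E_p$, of factors of the form $m(E_p)$, $\mathcal{J}_{1,1}(\indic_{E_p^2})$, $\mathcal{J}_{2,0}(\indic_{E_p^2})$, $\mathcal{J}_{0,2}(\indic_{E_p^2})$, $M(E_p)$, $\conj{M(E_p)}$, and you need the disjoint-support multiplicativity $\mathcal{J}_{a_1,b_1}(h_1)\mathcal{J}_{a_2,b_2}(h_2)=\mathcal{J}_{a_1+a_2,b_1+b_2}(h_1\otimes h_2)$ when $h_1,h_2$ live on disjoint supports. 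This is immediate from the simple-function definition once the $h_i$ are approximated by off-diagonal simple functions on refinements of the respective $E_p$'s, but it is a separate observation and deserves a sentence. With that made explicit, the proof is complete and arguably more informative than the paper's.
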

\begin{proof}
  When $(k_2, l_2) \in \{ (1, 0), (0, 1) \}$, the proposition is proved in \cite[\largethm 9]{Ito52}.
  For general $(k_2, l_2)$, we can prove the proposition by induction.
\end{proof}
\begin{definition}\label{def:complex_hermite_polynomials}
  We define complex Hermite polynomials $\{H_{k,l}(z, c) \}$ by the identity
  \begin{equation*}
    \exp(u \conj{z} + v z - c uv) = \sum_{m, n \in \Z} \frac{v^k u^l}{k! l!} H_{k, l}(z, c), \quad  z, c \in \C.
  \end{equation*}
\end{definition}
\begin{remark}\label{remark:explicit_form_of_hermite}
  We have the following explicit representation
  \begin{equation*}
    H_{k,l}(z, c) = \sum_{m=0}^{\min\{k,l\}} m! \binom{k}{m} \binom{l}{m} (-c)^m z^{k-m} \conj{z}^{l-m}.
  \end{equation*}
  In particular,
  \begin{align*}
    &H_{0, 0}(z, c) = 1, \\
    &H_{1, 0}(z, c) = z,  \quad H_{0, 1}(z, c) = \conj{z}, \\
    &H_{2, 0}(z, c) = z^2, \quad H_{1, 1}(z, c) = \abs{z}^2 - c, \quad H_{0, 2}(z, c) = \conj{z}^2, \\
    &H_{2, 1}(z, c) = \abs{z}^2 z - 2c z.
  \end{align*}
\end{remark}
\begin{proposition}\label{prop:properties_of_hermite_polynomials}
  We have the following identities.
  \begin{enumerate}[(i)]
    \item
      $H_{k+1, l}(z, c) = z H_{k,l}(z, c) - c l H_{k, l-1}(z, c)$.
    \item $H_{k, l+1}(z, c) = \conj{z} H_{k, l}(z, c) - c k H_{k-1, l}(z, c)$.
    \item
    $
      H_{k, l}(x + y, c) = \sum_{i \leq k, j \leq l} \binom{k}{i} \binom{l}{j} x^i \conj{x}^j
      H_{k-i, l-j}(y, c).
      $
  \end{enumerate}
\end{proposition}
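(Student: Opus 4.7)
The plan is to derive all three identities directly from the generating function
\begin{equation*}
  G(u,v;z,c) \defby \exp(u\conj{z} + vz - cuv) = \sum_{k,l\geq 0} \frac{v^k u^l}{k!\,l!} H_{k,l}(z,c),
\end{equation*}
treating $u,v$ as formal variables. This avoids the combinatorial bookkeeping that would arise from working with the explicit expansion in Remark \ref{remark:explicit_form_of_hermite}.

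For (i), I would compute $\partial_v G$ in two ways. Differentiating the exponential gives $\partial_v G = (z - cu)G$. Differentiating the series termwise and reindexing ($k \mapsto k+1$) gives $\partial_v G = \sum_{k,l\geq 0} \frac{v^k u^l}{k!\,l!} H_{k+1,l}(z,c)$. Multiplying the series for $G$ by $z$ is immediate, while multiplying by $cu$ shifts the $u$-index by one; after reindexing $l \mapsto l-1$ in that term (which contributes $0$ when $l=0$, consistently with the factor $cl$), matching coefficients of $v^k u^l/(k!\,l!)$ yields the recurrence. Identity (ii) follows by the symmetric argument applied to $\partial_u G = (\conj{z} - cv)G$.

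For (iii), I would use the multiplicative factorization
\begin{equation*}
  G(u,v;x+y,c) = \exp(u\conj{x} + vx)\,\exp(u\conj{y} + vy - cuv).
\end{equation*}
The first factor splits further as $\exp(u\conj{x})\exp(vx) = \sum_{i,j\geq 0} \frac{v^i u^j}{i!\,j!}\, x^i \conj{x}^j$, and the second factor is $\sum_{k',l'\geq 0} \frac{v^{k'} u^{l'}}{k'!\,l'!} H_{k',l'}(y,c)$. Multiplying the two series, writing $k = i + k'$, $l = j + l'$, and collecting the coefficient of $v^k u^l/(k!\,l!)$ gives exactly the binomial sum in (iii) after recognizing $\frac{k!}{i!(k-i)!} = \binom{k}{i}$ and similarly for $l$.

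There is no substantive obstacle here; the identities are essentially encoded in the definition, and the only care needed is in the reindexing for (i) and (ii) and in the Cauchy-product bookkeeping for (iii). One could in principle verify (i) and (ii) directly from the explicit formula by manipulating $\binom{k}{m}\binom{l}{m}$, but this is strictly more painful and offers no additional insight.
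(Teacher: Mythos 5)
Your proposal matches the paper's proof: both (i) and (ii) are obtained by differentiating the generating function with respect to $v$ (resp.\ $u$) and matching coefficients, and (iii) comes from factoring $\exp(u\conj{x+y}+v(x+y)-cuv)$ into the $x$-series times the $H_{\cdot,\cdot}(y,c)$-series and taking the Cauchy product. The reasoning is correct and the route is essentially identical.
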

\begin{proof}
  Since
  \begin{equation*}
    \frac{\partial}{\partial v} \exp(u \conj{z} + v z - c u v)
    = (z - c u) \exp(u \conj{z} + v z - c u v) ,
  \end{equation*}
  we have
  \begin{equation*}
    \sum \frac{v^k u^l}{k! l!} H_{k+1, l}(z, c)
    = \sum \frac{v^k u^l}{k! l!} z H_{k, l}(z, c) - c \sum \frac{v^k u^l}{k! l!} l H_{k, l-1}(z, c).
  \end{equation*}
  This proves (i). The proof of (ii) is similar.

  For (iii), we observe that
  \begin{align*}
    \exp(u (\conj{x+y}) + v (x+y) - c u v) &=
    \left( \sum \frac{v^k u^l}{k! l!} H_{k,l}(y,c) \right) \left( \sum \frac{v^k u^l}{k! l!} x^k \conj{x}^l \right) \\
    &= \sum \frac{v^{k_1 + k_2} u^{l_1 + l_2}}{k_1! k_2! l_1! l_2!} H_{k_1, l_1}(y, c) x^{k_2} \conj{x}^{l_2}.
    \qedhere
  \end{align*}
\end{proof}
\begin{corollary}\label{cor:multiple_integral_and_hermite_polynomial}
  Let $f \in L^2(E)$, $Z \defby \mathcal{J}_{1,0}(f)$ and $c \defby \norm{f}_{L^2(E)}^2$.
  Then, we have
  \begin{equation*}
    \mathcal{J}_{k, l}(f^{\otimes(k+l)}) = H_{k, l}(Z, c),
  \end{equation*}
  where $f^{\otimes n}(t_1, \ldots, t_n) \defby f(t_1) \cdots f(t_n)$.
\end{corollary}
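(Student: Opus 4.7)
The plan is to induct on $k+l$, using Proposition~\ref{prop:properties_of_hermite_polynomials}(i) as the driving recurrence. The base cases $\mathcal{J}_{0,0}(1) = 1 = H_{0,0}(Z,c)$ and $\mathcal{J}_{1,0}(f) = Z = H_{1,0}(Z,c)$ are immediate from the definitions. More generally, iterated application of Proposition~\ref{prop:product_formula} to $Z^k$ or $\overline{Z}^l$ admits no contractions (because $l_1 = l_2 = 0$, respectively $k_1 = k_2 = 0$, forces both $r_i = 0$), which dispatches the boundary strips: $\mathcal{J}_{k,0}(f^{\otimes k}) = Z^k = H_{k,0}(Z,c)$ and, symmetrically using Proposition~\ref{prop:properties_of_hermite_polynomials}(ii), $\mathcal{J}_{0,l}(f^{\otimes l}) = \overline{Z}^l = H_{0,l}(Z,c)$.

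For the inductive step, fix $k \geq 0$ and $l \geq 1$ and assume the statement for all pairs of total weight at most $k+l$. The recurrence
\[
  H_{k+1,l}(Z,c) = Z\, H_{k,l}(Z,c) - cl\, H_{k,l-1}(Z,c)
\]
reduces the goal to establishing
\[
  Z\, \mathcal{J}_{k,l}\bigl(f^{\otimes(k+l)}\bigr) = \mathcal{J}_{k+1,l}\bigl(f^{\otimes(k+l+1)}\bigr) + cl\, \mathcal{J}_{k,l-1}\bigl(f^{\otimes(k+l-1)}\bigr).
\]
I obtain this by applying Proposition~\ref{prop:product_formula} with $k_1 = 1$, $l_1 = 0$, $k_2 = k$, $l_2 = l$. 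The admissible indices are only $(r_1, r_2) = (0, 0)$ and $(1, 0)$, since $r_2 \leq \min(k, 0) = 0$ and $r_1 \leq \min(1, l) = 1$. The empty contraction contributes $\mathcal{J}_{k+1,l}(f^{\otimes(k+l+1)})$. For $r_1 = 1$, each of the $l$ admissible $\gamma$'s pairs the unique holomorphic argument of $f$ against one antiholomorphic slot of $f^{\otimes(k+l)}$; integrating the paired factor yields $\int_E f^2\, dm = c$, and what remains is $f^{\otimes(k+l-1)}$. Summing over the $l$ choices of $\gamma$ produces the desired $cl\, \mathcal{J}_{k,l-1}(f^{\otimes(k+l-1)})$.

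The principal bookkeeping subtlety is the identification $\int_E f^2\, dm = \|f\|_{L^2}^2 = c$ in the contraction step, which uses that $f$ is read as real-valued in the tensor (equivalently, that the last $l$ slots of $f^{\otimes(k+l)}$ are interpreted with complex conjugation, matching the antiholomorphic role played by $\overline{M}$). Once that is in place, the inductive hypothesis identifies the two right-hand terms with $Z\, H_{k,l}(Z,c)$ and $cl\, H_{k,l-1}(Z,c)$, closing the induction.
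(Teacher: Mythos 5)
Your proof takes the same route as the paper's: the paper's one-line argument is precisely that Proposition~\ref{prop:product_formula} gives $\{\mathcal{J}_{k,l}(f^{\otimes(k+l)})\}_{k,l}$ the same recursion (together with matching base cases) that Proposition~\ref{prop:properties_of_hermite_polynomials} gives $\{H_{k,l}(Z,c)\}_{k,l}$, which is exactly what you spell out by induction on $k+l$. Your closing remark is also apt: since the contraction $f\otimes_\gamma g$ carries no conjugation, the pairing in the $r_1=1$ terms produces $\int_E f^2\,dm$, and likewise $\mathcal{J}_{0,1}(f)=\conj{Z}$ forces $f=\conj{f}$, so the identity as written tacitly assumes $f$ is real-valued (or equivalently that the last $l$ slots of the tensor are conjugated) for these to coincide with $c=\norm{f}_{L^2(E)}^2$ and $\conj{Z}$.
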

\begin{proof}
  By Proposition \ref{prop:product_formula} and \ref{prop:properties_of_hermite_polynomials},
  $\{\mathcal{J}_{k,l}(f^{\otimes (k+l)})\}_{k,l}$ and $\{H_{k,l}(Z, c)\}_{k,l}$ satisfy the same
  recursive relation.
\end{proof}

\section*{Acknowledgement}
The author would like to thank his supervisor Prof. Yuzuru Inahama for bringing this problem to the author's attention and helping the author
throughout this work. The author also would like to thank Dr. Masato Hoshino for his valuable comments.

\printbibliography
\end{document}